\newcounter{zlist}
\newenvironment{zlist}{\begin{list}{\rm(\arabic{zlist})}{
\usecounter{zlist}\leftmargin2.5em\labelwidth2em\labelsep0.5em
\topsep0.6ex\itemsep0.3ex plus0.2ex minus0.3ex
\parsep0.3ex plus0.2ex minus0.1ex}}{\end{list}}
\newcounter{blist}
\newenvironment{blist}{\begin{list}{{\rm(\alph{blist})}}{
\usecounter{blist}\leftmargin2.5em\labelwidth2em\labelsep0.5em
\topsep0.6ex \itemsep0.3ex plus0.2ex minus0.3ex
\parsep0.3ex plus0.2ex minus0.1ex}}{\end{list}}
\newcounter{rlist}
\newenvironment{rlist}{\begin{list}{{\rm(\roman{rlist})}}{
\usecounter{rlist}\leftmargin2.5em\labelwidth2em\labelsep0.5em
\topsep0.6ex\itemsep0.3ex plus0.2ex minus0.3ex
\parsep0.3ex plus0.2ex minus0.1ex}}{\end{list}}
\newcommand{\xra}{\xrightarrow}
\newcommand{\Mor}{{\rm Mor}}
\newcommand{\Ra}{\Rightarrow}
\newcommand{\talpha}{{\widetilde \alpha}}
\newcommand{\tbeta}{{\widetilde \beta}}
\newcommand{\ul}{\underline}
\newcommand{\ol}{\overline}
\newcommand{\wh}{\widehat}
\newcommand{\A}{\mathbb{A}}
\newcommand{\uA}{\underrightarrow{\mathbb{A}}}
\newcommand{\uB}{\underrightarrow{\mathbb{B}}}
\newcommand{\rA}{\underline{\mathbb{A}}}
\newcommand{\rB}{\underline{\mathbb{B}}}
\newcommand{\B}{\mathbb{B}}
\newcommand{\K}{\mathbb{K}}
\newcommand{\whve}{\widehat{\varepsilon}}
\newcommand{\ot}{\otimes}
\newcommand{\ve}{\varepsilon}
\newcommand{\up}{\upsilon}
\newcommand{\id}{I}
\newcommand{\tr}{{\it{r}}}
\newcommand{\tl}{{\ell}}
\newcommand{\ttr}{{\widetilde{\it{r}}}}
\newcommand{\ttl}{{\widetilde{\ell}}}
\newcommand{\ta}{\widetilde\alpha}
\newcommand{\tb}{\widetilde\beta}
\newcommand{\twve}{\widetilde\varepsilon}  
\newcommand{\tweta}{\widetilde\eta}   
\newtheorem{theorem}{Theorem}[section]
\newtheorem{thm}[theorem]{}
\numberwithin{equation}{section}
\newcommand{\btm}{\begin{thm}}
\newcommand{\etm}{\end{thm}}
\begin{document} 
  
\title{Weak Frobenius monads and Frobenius bimodules}
 \author{Robert Wisbauer}  
% \shorttitle{Weak Frobenius monads}
% \shortauthor{R. Wisbauer}
\address{Department of Mathematics, HHU,
 40225 D\"usseldorf, Germany\newline
\phantom{ur} E-mail:  wisbauer@math.uni-duesseldorf.de,\newline
\phantom{ur} URL: www.math.uni-duesseldorf.de/~wisbauer}
 
%\thanks{The author wants to thank Bachuki Mesablishvili for proofreading}

% \research 

\date{December 8, 2015}

\begin{abstract} As observed by  Eilenberg and  Moore (1965),
for a monad $F$ with right adjoint comonad $G$ on any category $\A$,  the category
of unital $F$-modules $\A_F$ is isomorphic to the category of counital $G$-comodules $\A^G$.
The monad $F$ is Frobenius provided we have $F=G$ and then $\A_F\simeq \A^F$. 
Here we investigate which kind of isomorphisms can be obtained for non-unital monads 
and non-counital comonads. For this we observe that the mentioned isomorphism  is in
 fact an isomorphisms between $\A_F$ and the category of bimodules $\A^F_F$ 
subject to certain compatibility conditions (Frobenius bimodules). 
Eventually we obtain that 
for a weak monad $(F,m,\eta)$ and a weak comonad  $(F,\delta,\ve)$  satisfying 
$Fm\cdot \delta F = \delta \cdot m = mF\cdot F\delta$ and 
$m\cdot F\eta = F\ve\cdot \delta$, the category of compatible $F$-modules 
is isomorphic to the category of compatible Frobenius bimodules 
and the category of compatible $F$-comodules.

   MSC: 18A40, 18C20, 16T1 . 

Keywords: Pairing of functors; adjoint functors; weak (co)monads;  
Frobenius monads;  firm modules; cofirm comodules; separability.
\end{abstract}

\maketitle  

\tableofcontents

\section*{Introduction} 

A monad $(F,m,\eta)$ on a category $\A$ is called a {\em Frobenius monad} provided
the functor $F$ is (right) adjoint to itself  (e.g. Street \cite{Street-Frob}). 
Then $F$ also allows for a 
comonad structure $(F,\delta,\ve)$ and the (Eilenberg-Moore) category $\A_F$ of 
$F$-modules is isomorphic to the category $\A^F$ of $F$-comodules. 
As shown in \cite[Theorem 3.13]{MW-QF}, this isomorphism characterises 
a functor with monad and comonad structure as Frobenius monad. 
It is not difficult to see  that the categories $\A_F$
and $\A^F$ are in fact isomorphic to the category $\A^F_F$ of 
(unital and counital) {\em Frobenius bimodules}. 
In this setting units and counits play a crucial role. 

Here we are concerned with the question what is left from these correspondences 
 when the conditions on units and counits are weakened.
 An elementary approach to this setting 
 is offered in \cite{Wi-ad-reg} and \cite{Wi-reg} where adjunctions between 
 functors are replaced by {\em regular pairings} $(L,R)$ of functors
$L:\A\to \B$, $R:\B\to \A$ (see \ref{reg-p}). 
The composition $LR$ (resp. $RL$) yields endofunctors on $\A$ (resp. $\B$) and  
these are closely related to {\em weak (co)monads} as considered 
by B\"ohm et al. in \cite{B-weak, BoLaSt-I} (see Remark \ref{rem-cat}). 
In Section \ref{Prel} we recall the definitions and collect basic results needed 
for our investigations.  

Given a non-unital monad $(F,m)$ on any category $\A$, a non-unital module
$\varrho: F(A)\to A$ is called {\em firm} (see \cite{BoGo}) if the 
defining fork
$$ \xymatrix{ 
 FF(A) \ar@<0.4ex>[rr]^{m_A}\ar@<-0.4ex>[rr]_{F(\varrho)}& & F(A) \ar[r]^{\;\varrho} & A}$$
 is a coequaliser in the category of non-unital $F$-modules.
This notion is generalised 
in Section \ref{cofirm} by restricting the coequaliser
requirement to certain classes $\K$ of morphisms of $F$-modules. It turns out that 
compatible modules of a weak monad $(F,m,\eta)$ satisfy the resulting conditions 
for a suitable class $\K$ (Proposition \ref{r-equal}). Similar results hold for 
weak comonads.

In Section \ref{Frob-Frob}, we return to parings of the functors $L$ and $R$.
Given natural transformations $\eta:I_\A\to RL$ and $\twve:RL\to I_\B$, one obtains
  a non-unital monad  $(LR,L\twve R)$ and 
 a non-counital co\-monad $(LR,L\eta R)$ on $\B$  for which the
Frobenius condition is satisfied %(see (\ref{frob-dia})) 
and this motivates
 the definition of %non-unital modules,  non-counital comodules,
Frobenius bimodules (see \ref{frob-mod}). 
Given a non-counital $LR$-comodule $\omega:B\to LR(B)$, 
the question arises when it can be 
extended to a Frobenius bimodule by some $\varrho:LR(B)\to B$. As sufficient condition 
it turns out that the defining cofork for $\varrho$ is a coequaliser in the category of 
non-counital comodules (see Proposition \ref{equal}). 
Further situations are investigated, in particular for regular pairings (Theorems \ref{reg-basic}, \ref{reg-basic-d}).

In Section \ref{weak-frob}, the results about the pairings $(L,R)$ from Section 
\ref{Frob-Frob} are reformulated for the (co)monad $LR$, that is, we consider
an endofunctor $F$ on $\B$ endowed with a weak monad structure
 $(F,m,\eta)$, a weak comonad structure $(F,\delta,\ve)$, and  
the compatibility between $m$ and  $\delta$ is postulated
as the Frobenius property (see \ref{w-frob}). (For $L\eta R$ and  $L\twve R$ 
the latter follows by naturality, see (\ref{frob-dia})). 
The constructions lead to various functors between (compatible) module, comodule and
bimodule categories (see \ref{w-firm}, \ref{w-firm-m}, \ref{push}).
For proper (co)monads we get 
some results obtained by B\"ohm and G\'omez-Torrecillas in \cite{BoGo} as 
Corollaries \ref{Cor-1}, \ref{Cor-2}.

\section{Regular pairings}\label{Prel}

Throughout $\A$ and $\B$ will denote any categories. 
%The composition of two 
%morphisms $f$ and $g$  in a category will be written as $g\cdot f$.
% or just as $gf$. 
The symbols $I_A$, $A$, or just $I$ will stand for the identity 
morphism on an object $A$,  $I_F$ or $F$ denote the identity transformation 
on the functor $F$, and $I_\A$ means the identity functor on $\A$.

Given an endofunctor $T$ on $\A$, an idempotent natural transformation $e:T\to T$
is said to {\em split} if there are an endofunctor $\ul T$ on $\A$ 
and natural transformations $p:T\to \ul T$ and $i:\ul T\to T$ such that $e= i\cdot p$ and $p\cdot i=I_{\ul T}$.

We recall some notions from \cite{Wi-ad-reg}, \cite{Wi-reg}, \cite{BoLaSt-I}.

\begin{thm}\label{q-comod}{\bf Non-counital comodules.} \em
Let $(G,\delta)$ be a pair with an endofunctor $G:\A\to \A$ and a  coassociative 
natural transformation (coproduct) $\delta:G\to GG$. Then (non-counital)
$G$-comodules are defined as objects $A\in \A$ with a morphism 
$\up:A\to G(A)$ satisfying 
$G(\up)\cdot \up=\delta_A\cdot \up$
and the category of these $G$-comodules is denoted by $\uA^G$. 

Consider a triple $(G,\delta,\ve)$, with $(G,\delta)$ a pair as above and 
$\ve:G\to I_\A$ any natural transformation (quasi-counit).
Then a $G$-comodule $(A,\up)$ is said to be {\em compatible} provided 
$\up= G\ve_A \cdot \delta_A\cdot \up$. The full subcategory of $\uA^G$ 
consisting of compatible comodules is denoted by $\rA^G$.

$(G,\delta,\ve)$ is called a {\em weak comonad} if
\begin{center}
$\ve=\ve\cdot G\ve\cdot \delta$,\; $\delta=G\ve G\cdot  G\delta\cdot \delta$, \: and \:
$G\ve\cdot \delta= \ve G\cdot \delta$. 
\end{center}
Then a $G$-comodule $(A,\up)$ is compatible if % provided 
$ \ve G_A \cdot \delta_A\cdot \up =\up= \up\cdot \ve_A \cdot \up.$ 
Furthermore, $G\ve\cdot \delta:G\to G$ is idempotent and in case this is split 
by $G\xra{p} \ul G\xra{i} G$, one obtains a comonad $(\ul G,\ul \delta,\ul \ve)$
by putting
$$\ul\delta: \ul G\xra{i} G\xra{\delta} GG\xra{pp} \ul G\ul G,\quad 
 \ul \ve:\ul G\xra{i}G\xra{\ve} I_\A.$$  
\end{thm}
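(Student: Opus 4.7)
The statement has three assertions: that $e = G\ve\cdot\delta$ is idempotent, that compatibility of $(A,\up)$ under the weak axioms is equivalent to $\ve G_A\cdot\delta_A\cdot\up = \up = \up\cdot\ve_A\cdot\up$, and that when $e$ splits as $G\xra{p}\ul G\xra{i}G$ the displayed formulas for $\ul\delta$ and $\ul\ve$ define a genuine comonad. My plan is first to extract a handful of identities on $G$ from the three weak axioms plus coassociativity of $\delta$, and then to use these identities throughout so that each of the three claims reduces to a short formal computation.

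Naturality of $\delta$ at the components of $\ve$ gives $\delta\cdot G\ve = GG\ve\cdot\delta G$, which together with coassociativity $\delta G\cdot\delta = G\delta\cdot\delta$ yields $\delta\cdot e = Ge\cdot\delta$. On the other hand, axiom (ii) combined with axiom (iii) gives $\delta = G\ve G\cdot G\delta\cdot\delta = G(\ve G\cdot\delta)\cdot\delta = Ge\cdot\delta$, and a symmetric argument yields $\delta = eG\cdot\delta$. Comparing, one obtains the absorption identity $\delta\cdot e = \delta$, from which idempotency is immediate: $e\cdot e = G\ve\cdot(\delta\cdot e) = G\ve\cdot\delta = e$. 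The dual identity $\ve\cdot e = \ve$ is precisely axiom (i).

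For the compatibility characterization, one direction is direct: axiom (iii) turns $\up = G\ve_A\cdot\delta_A\cdot\up$ into $\up = \ve G_A\cdot\delta_A\cdot\up$, while naturality of $\ve$ at the morphism $\up$ gives $\up\cdot\ve_A = \ve G_A\cdot G\up$, so the coaction identity $G\up\cdot\up = \delta_A\cdot\up$ yields $\up\cdot\ve_A\cdot\up = \ve G_A\cdot G\up\cdot\up = \ve G_A\cdot\delta_A\cdot\up = \up$. Conversely, the same chain read backwards (and axiom (iii) used to switch $\ve G_A$ for $G\ve_A$) recovers the original compatibility $\up = G\ve_A\cdot\delta_A\cdot\up$.

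The bulk of the work is verifying that $(\ul G,\ul\delta,\ul\ve)$ is a comonad. For the counit axiom $\ul G\ul\ve\cdot\ul\delta = I_{\ul G}$, one expands it as $\ul G\ve\cdot\ul G i\cdot pp\cdot\delta\cdot i$, rewrites $\ul G i\cdot pp = \ul G(i\cdot p)\cdot pG = \ul G e\cdot pG$, absorbs $\ul G\ve\cdot\ul G e$ into $\ul G\ve$ via $\ve\cdot e = \ve$, and uses naturality of $p$ at $\ve$ to obtain $\ul G\ve\cdot pG = p\cdot\ve G$; the composite then collapses to $p\cdot(G\ve\cdot\delta)\cdot i = p\cdot e\cdot i = p\cdot i\cdot p\cdot i = I_{\ul G}$. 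The opposite counit axiom follows in parallel after trading $G\ve\cdot\delta$ for $\ve G\cdot\delta$ via axiom (iii). Coassociativity of $\ul\delta$ is proved by reducing both $\ul G\ul\delta\cdot\ul\delta$ and $\ul\delta\ul G\cdot\ul\delta$ to the common expression $ppp\cdot G\delta\cdot\delta\cdot i$: the absorption $\delta\cdot e = \delta$ swallows the $e$ appearing after expanding $\ul G i\cdot pp$ and $i\ul G\cdot pp$, naturality of $p$ at $\delta$ collects the $p$-factors on the outside so they coalesce into $ppp$, and coassociativity of $\delta$ reconciles the two orders $G\delta\cdot\delta$ and $\delta G\cdot\delta$. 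The main obstacle in all of this is purely combinatorial: keeping the various whiskerings $pG$, $Gp$, $pp$, $ppp$ straight and applying naturality at the correct natural transformation each time. Once the identities $\delta = Ge\cdot\delta = eG\cdot\delta = \delta\cdot e$ and $\ve\cdot e = \ve$ are recorded, the comonad axioms for $(\ul G,\ul\delta,\ul\ve)$ reduce to mechanical computation.
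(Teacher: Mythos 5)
The paper states \ref{q-comod} without proof --- it is a recollection of definitions and facts from \cite{Wi-reg} and \cite{BoLaSt-I} --- so there is no internal argument to compare against; your verification is correct and supplies exactly the details the paper delegates to its references. The identities you isolate first, namely $\delta\cdot e=Ge\cdot\delta=eG\cdot\delta=\delta$ and $\ve\cdot e=\ve$ for $e=G\ve\cdot\delta$ (the first chain coming from naturality of $\delta$ at $\ve$, coassociativity, and axioms (ii)--(iii)), are precisely what makes the rest mechanical: idempotency of $e$ is immediate, the equivalence of the two formulations of compatibility follows from axiom (iii) together with naturality of $\ve$ at $\up$ and the coaction identity, and in the comonad axioms for $(\ul G,\ul\delta,\ul\ve)$ each factor $i\cdot p=e$ produced by expanding the whiskered $pp$ is absorbed by $\delta\cdot e=\delta$ or $\ve\cdot e=\ve$ while the remaining instances of $p$ are pushed outward by naturality until they coalesce into $pp$ or $ppp$. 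One slip of the pen: in the first counit computation, naturality of $p$ gives $\ul G\ve\cdot pG=p\cdot G\ve$ rather than $p\cdot\ve G$; since this expression is precomposed with $\delta\cdot i$ and $G\ve\cdot\delta=\ve G\cdot\delta$ by the third weak-comonad axiom, the discrepancy is harmless, but you should state it correctly.
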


\begin{thm}\label{q-mod}{\bf Non-unital modules.} \em
Let $(F,m)$ be a pair with an endofunctor 
$F:\A\to \A$ and an associative 
natural transformation (product) $m:FF\to F$. Then (non-unital) 
$F$-modules are defined as objects $A\in \A$ with a morphism 
$\varrho:F(A)\to A$ satisfying $\varrho\cdot F\varrho=\varrho\cdot m_A$
and the category of these $F$-modules is denoted by $\uA_F$. 

Consider a triple $(F,m,\eta)$, with  $(F,m)$ a pair as above and 
any natural transformation $\eta:I_\A\to F$ (quasi-unit).
An $F$-module $(A,\varrho)$ is said to be {\em compatible} provided 
$\varrho= \varrho\cdot m_A\cdot F\eta_A$ and the full subcategory of $\uA_F$ 
consisting of compatible modules is denoted by $\rA_F$.

$(F,m,\eta)$ is called a {\em weak monad} if
\begin{center}
$\eta=m\cdot F\eta\cdot \eta$,\; $m=m\cdot mF\cdot F\eta F$, \: and \:
$m\cdot F\eta= m\cdot \eta F$. 
\end{center}
Then an $F$-module $(A,\varrho)$ is compatible if 
$ \varrho\cdot m_A\cdot  \eta F_A=\varrho= \varrho \cdot \eta_A \cdot \varrho.$  
Furthermore, $m\cdot F \eta:F\to F $ is  idempotent and in case this is split 
by $F\xra{p} \ul F\xra{i} F$, one obtains a  monad $(\ul F,\ul m,\ul \eta)$ by putting
$$\ul m:\ul F\ul F \xra{ii} FF\xra{m}F\xra{p} \ul F,\quad
\ul \eta: I_\A \xra{\eta} F\xra{p} \ul F. 
$$
\end{thm}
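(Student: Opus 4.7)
The statement is the categorical dual of \ref{q-comod}, so my first move would be to invoke formal duality: reverse every arrow and swap $\delta\leftrightarrow m$, $\ve\leftrightarrow\eta$. For clarity I would still record the three assertions that need verification: (a) the alternative form of compatibility, (b) idempotence of $m\cdot F\eta$, and (c) the strict monad structure on the splitting $\ul F$. Throughout I would lean on the three weak monad identities $\eta=m\cdot F\eta\cdot \eta$, $m=m\cdot mF\cdot F\eta F$, and $m\cdot F\eta=m\cdot \eta F$.

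For (a), the plan is short: the third identity immediately translates $\varrho=\varrho\cdot m_A\cdot F\eta_A$ into $\varrho=\varrho\cdot m_A\cdot \eta F_A$, and then combining the module axiom $\varrho\cdot F\varrho=\varrho\cdot m_A$ with naturality of $\eta$ at $\varrho\colon F(A)\to A$ produces $\varrho\cdot \eta_A\cdot \varrho=\varrho$. The converse direction is immediate from the third identity.

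For (b), I would chase $(m\cdot F\eta)^2$ through a fixed sequence of formal moves: rewrite the leading factor as $m\cdot \eta F$ via the third identity; push $\eta F$ past $m$ by naturality of $\eta$ at $m$ to reach $m\cdot Fm\cdot \eta FF\cdot F\eta$; apply a second naturality step together with associativity of $m$ to bring the expression into $m\cdot mF\cdot FF\eta\cdot \eta F$; apply the functor $F$ to the third identity to trade $FF\eta$ for $F\eta F$; and finish with the second identity $m=m\cdot mF\cdot F\eta F$, which collapses the whole chain to $m\cdot \eta F=m\cdot F\eta$. No step outside this chain is needed.

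For (c), the structure maps $\ul m$ and $\ul \eta$ are forced by asking $i\colon \ul F\to F$ to behave as an algebra morphism modulo the idempotent $e=ip=m\cdot F\eta$. For each monad axiom my template is the same: unfold using the definitions of $\ul m$, $\ul \eta$, and the whiskered composite $ii\colon\ul F\ul F\to FF$; transport $i$'s and $p$'s through the expression by naturality of $i$; collapse an occurrence of $ip$ into $m\cdot F\eta$; apply one of the three weak monad identities; and finish with $p\cdot i=I_{\ul F}$. The right unit law illustrates the pattern:
\[
\ul m\cdot \ul F\ul \eta \;=\; p\cdot m\cdot F(i\cdot p\cdot \eta)\cdot i \;=\; p\cdot m\cdot F(m\cdot F\eta\cdot \eta)\cdot i \;=\; p\cdot m\cdot F\eta\cdot i \;=\; p\cdot i\cdot p\cdot i \;=\; I_{\ul F},
\]
with the third equality provided by the first weak monad identity. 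The left unit law and the associativity of $\ul m$ follow from the same template. I expect the main obstacle to be purely notational bookkeeping --- keeping the whiskerings of $i$, $p$, and $\eta$ consistent --- rather than conceptual; the substance is entirely absorbed by the three axioms and the splitting $pi=I_{\ul F}$.
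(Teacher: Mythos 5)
Your verification is correct. The paper states \ref{q-mod} without proof (it is recalled from \cite{Wi-reg} and \cite{BoLaSt-I}), and your direct checks --- the naturality argument for the alternative form of compatibility, the chase establishing $(m\cdot F\eta)\cdot(m\cdot F\eta)=m\cdot F\eta$, and the splitting argument giving the monad $(\ul F,\ul m,\ul\eta)$ --- are exactly the standard ones, dual to \ref{q-comod}; the only cosmetic point is that in step (b) the exchange $FF\eta\leftrightarrow F\eta F$ must be performed under the prefix $m\cdot Fm$ (i.e.\ before rewriting $m\cdot Fm$ as $m\cdot mF$), but since these prefixes are equal by associativity the chain closes either way.
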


\begin{thm}\label{pairings}{\bf Pairings of functors.} \em
For functors $L:\A\to \B$ and $R:\B\to \A$,
% between any  categories $\A$ and $\B$, 
{\em pairings} are defined as maps, natural in 
$A\in \A$ and $B\in \B$,
$$\begin{array}{c}
\xymatrix{\Mor_\B (L(A),B) \ar@<0.4ex>[r]^{\alpha} & 
 \Mor_\A (A,R(B))\ar@<0.4ex>[l]^{\beta},} \\
  \xymatrix{\Mor_\A (R(B),A) \ar@<0.4ex>[r]^{\ta} & 
 \Mor_\A (B,L(A))\ar@<0.4ex>[l]^{\tb}}.
\end{array}$$ 
These - and their compositions - are determined by natural transformations 
obtained as images of the corresponding identity morphisms,
\begin{center}
\begin{tabular}{rl|rl}
 map & natural transformation \qquad & \quad map \quad & natural transformation \\[+1mm]
 $\alpha$ & $\eta: I_\A \to RL$,& $ \ta $ & $ \tweta: I_\B\to LR$,\\ 
 $\beta$ & $\ve:LR\to I_\B$, &  $\tb $ & $\twve: RL \to I_\A$, \\
$\beta\cdot \alpha$ & $\tl:L\xra{L\eta} LRL \xra{\ve L} L$ &
          $\tb \cdot \ta  $ & $\ttr :R\xra{ R\tweta} RLR \xra{ \twve R} R$ \\
 $ \alpha\cdot \beta$ & $\tr: R\xra{\eta R} RLR \xra{R\ve} R$ &
          $\ta  \cdot \tb $ & $ \ttl: L\xra{\tweta L} LRL \xra{ L\twve} L$ .
\end{tabular}
\end{center}

$\beta$ (resp. $\alpha$) is said to be {\em symmetric} if $L\tr= \tl R$ (resp.
 $R\tl = \tr L$)  (see \cite[\S 3]{Wi-reg}). 
Under the given conditions (see \cite{Wi-reg}), 
%\begin{itemize} 
\begin{rlist}
\item $(LR, L\eta R, \ve)$ is a non-counital comonad on $\B$ with quasi-counit $\ve$;
%\item if $\beta$ is symmetric, then $(LR, L\eta R, \ve)$ is a weak comonad on $\B$;
\item $(RL,R\ve L,\eta)$ is a non-unital monad on $\A$ with quasi-unit $\eta$; 
\item $(LR, L\twve R,\tweta)$ is a non-unital monad on $\B$ with quasi-unit $\tweta$;
\item $(RL,R\tweta L,\twve)$ is a non-counital comonad on $\A$ with 
quasi-counit $\twve$. 
\end{rlist}
%\end{itemize} 
\end{thm}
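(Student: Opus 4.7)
The plan is to verify, for each of the four items, that the listed (co)product is (co)associative; the quasi-(co)unit condition is then automatic, since each of $\eta$, $\ve$, $\tweta$, $\twve$ is by construction a natural transformation with the required source and target, and the definitions in \ref{q-comod}--\ref{q-mod} impose no further equation on the bare pair augmented by a quasi-(co)unit.

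The key observation underlying all four claims is uniform: each (co)product is obtained by whiskering a single natural transformation between endofunctors with the identity functors of $L$ and $R$, so its (co)associativity reduces, after stripping the outer whiskerings, to a naturality square of that transformation applied to one of its own components. Concretely for (i), I set $\delta := L\eta R : LR \to LRLR$ and rewrite the coassociativity $LR\delta\cdot\delta = \delta LR\cdot\delta$ as $L(RL\eta\cdot \eta)R = L(\eta RL\cdot \eta)R$; the equality inside $L(\,\cdot\,)R$ is precisely naturality of $\eta: I_\A\to RL$ on the morphism $\eta_A : A\to RL(A)$. For (ii), I set $m := R\ve L$ and rewrite associativity $m\cdot RLm = m\cdot mRL$ as $R(\ve\cdot LR\ve)L = R(\ve\cdot \ve LR)L$; the inner equality is naturality of $\ve: LR\to I_\B$ on the morphism $\ve_{L(A)}$.

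Claims (iii) and (iv) are formally the same arguments with the other pairing. In (iii), associativity of $L\twve R : LRLR\to LR$ on $\B$ strips to $\twve\cdot RL\twve = \twve\cdot \twve RL$, which is naturality of $\twve: RL\to I_\A$ on the component $\twve_A$. In (iv), coassociativity of $R\tweta L : RL\to RLRL$ on $\A$ strips to $LR\tweta\cdot \tweta = \tweta LR\cdot \tweta$, which is naturality of $\tweta: I_\B\to LR$ on the component $\tweta_B$.

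Main obstacle: there is no genuine difficulty; the whole verification is a package of four instances of the single pattern ``apply naturality of the transformation $T$ to its own component $T_X$''. The only delicate point is the bookkeeping of the whiskering notation and recognising the symmetry between the four cases so that the argument is not needlessly repeated; once this is clear, each claim reduces to a one-line naturality check.
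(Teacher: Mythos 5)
Your verification is correct and is exactly the standard argument: each of the four (co)associativity identities, after stripping the outer whiskering by $L$ and $R$, is an instance of naturality of $\eta$, $\ve$, $\tweta$ or $\twve$ evaluated at one of its own components, and the quasi-(co)unit clauses impose no further equations by the definitions in \ref{q-comod} and \ref{q-mod}. The paper itself gives no proof here (it defers to \cite{Wi-reg}), but your reconstruction coincides with the argument that reference relies on.
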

 
Clearly,  if $\alpha$ is a bijection, then $(L,R)$ is an adjoint pair, if $\ta$ is a
bijection, then $(R,L)$ is an adjoint pair, and if
$\alpha$ and $\ta$ are bijections, then $LR$ and $RL$ are Frobenius functors.

\begin{thm}\label{reg-p}{\bf Regular pairings.} \em 
A pairing $(L,R,\alpha,\beta)$ is said to be {\em regular} if 
\begin{center}
$\alpha\cdot \beta\cdot \alpha =\alpha$
and $\beta\cdot \alpha\cdot \beta=\beta$.
\end{center}
In this case, $\tl: L\to L$ and $\tr:R\to R$ (see \ref{pairings}) are idempotents   
and
$$\begin{array}{c}
  \ve = \ve\cdot \tl\tr = \ve\cdot \tl R= \ve\cdot L\tr, \\[+1mm]
 \eta=\tr\tl\cdot\eta = R\tl\cdot \eta= \tr L\cdot \eta.
\end{array}$$
If $\beta$ is symmetric,  $\tl\tr= L\tr= \tl R$; if $\alpha$ is symmetric,
  $\tr\tl = R\tl = \tr L$.

Assume the idempotents $\tl$, $\tr$ to be splitting,  that is,
 $$L\xra{\tl} L = L\xra{p} \ul L\xra{i} L,\quad 
R\xra{\tr} R = R\xra{p'} \ul R\xra{i'} R.$$
Then, for the natural morphisms   
$$\ul \eta:\xymatrix{\id_\A \ar[r]^\eta &RL\ar[r]^{p' p}&\ul R \ul L,}  \quad
\ul\ve: \xymatrix{\ul L \ul R \ar[r]^{i i'}&LR\ar[r]^{\ve} & \id_\B,}
$$
one gets $\ul \ve \ul L \cdot \ul L \ul \eta = I_{\ul L}$ 
and  $\ul R \ul \ve \cdot \ul \eta \ul R= I_{\ul R}$, hence yielding 
%showing that $(\ul L,\ul R, \ul\eta, \ul\ve)$ yields 
an adjunction $(\ul L,\ul R, \ul\alpha, \ul\beta)$.
\end{thm}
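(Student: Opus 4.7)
The plan is to verify the two triangle identities $\ul\ve\,\ul L \cdot \ul L\,\ul\eta = I_{\ul L}$ and $\ul R\,\ul\ve \cdot \ul\eta\,\ul R = I_{\ul R}$; once these hold, the adjunction $(\ul L,\ul R,\ul\alpha,\ul\beta)$ is assembled by the classical recipe $\ul\alpha(f) = \ul R(f)\cdot \ul\eta_A$ and $\ul\beta(g) = \ul\ve_B\cdot \ul L(g)$, with bijectivity a formal consequence of the triangles. Thus only the two identities need work, and by the evident left--right symmetry between $(L,\tl,p,i)$ and $(R,\tr,p',i')$ it is enough to treat the first.

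For the first identity, the splitting equation $p\cdot i=I_{\ul L}$ makes $i:\ul L\to L$ a split monomorphism, so verifying $\ul\ve\,\ul L \cdot \ul L\,\ul\eta = I_{\ul L}$ is equivalent to the single equation $i\cdot\ul\ve\,\ul L\cdot \ul L\,\ul\eta = i$ of natural transformations $\ul L\to L$. I would expand $\ul\eta=p'p\cdot\eta$ and $\ul\ve=\ve\cdot ii'$ via their definitions, noting that each horizontal composite admits two presentations by the interchange law, e.g.\ $ii'=iR\cdot \ul Li' = Li'\cdot i\ul R$. Choosing whichever form makes the next step convenient, apply naturality of $\ve$ and then of $i$ to transport the leftmost $i$ rightward past $\ul\ve\,\ul L$; the composite rearranges into $\ve L\cdot L\phi\cdot i$ for a natural transformation $\phi:I_\A\to RL$ built from $\eta$ and the four splitting components.

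What remains is to identify $\phi$ with $\eta$. Further naturality applications of $p'$ and $i'$ collect the splitting components into $\tl=ip$ and $\tr=i'p'$, yielding $\phi=\tr L\cdot R\tl\cdot\eta$. The regularity identities recorded in \ref{reg-p}, namely $R\tl\cdot\eta=\eta$ and $\tr L\cdot\eta=\eta$, then collapse this to $\phi=\eta$, so the residual composite reads $\ve L\cdot L\eta\cdot i$. Invoking the pairing factorisation $\ve L\cdot L\eta=\tl=ip$ together with $pi=I_{\ul L}$ turns this into $ipi=i$, as required. The second triangle is handled in complete symmetry, using that $i'$ is split mono retracted by $p'$ and the dual regularity identities $\ve\cdot L\tr=\ve$ together with $\tr\tl\cdot\eta=\eta$. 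I expect the principal obstacle to be purely bookkeeping: at each naturality step one must pick the form of the horizontal composite making the naturality square simplify rather than proliferate, but no new ingredient beyond the regularity identities and the splitting equations $pi=I_{\ul L}$, $p'i'=I_{\ul R}$ is needed.
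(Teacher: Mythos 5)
Your argument is correct, and since the paper states \ref{reg-p} without proof (it is recalled from \cite{Wi-reg}), yours is the natural argument to supply. I checked the first triangle in detail: after postcomposing with the split monomorphism $i$, naturality of $i$ collects $i'_{\ul L}\cdot p'_{\ul L}$ into $\tr_{\ul L}$, and naturality of $\ve$ together with naturality of $\tr$ (equivalently of $p$) rearranges the composite into $\ve L\cdot L(\tr L\cdot R\tl\cdot \eta)\cdot i$; the identities $\tr L\cdot\eta=\eta$ and $R\tl\cdot\eta=\eta$ then give $\ve L\cdot L\eta\cdot i=\tl\cdot i = i\cdot p\cdot i = i$, and cancelling the split mono $i$ yields $\ul\ve\,\ul L\cdot\ul L\,\ul\eta=I_{\ul L}$, exactly as you outline. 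The second triangle goes through verbatim with $i'$ in place of $i$ (the computation again lands on $\tr L\cdot R\tl\cdot\eta=\eta$, i.e.\ $\tr\tl\cdot\eta=\eta$, so your appeal to the dual identities is if anything more than is needed). One minor remark: the statement of \ref{reg-p} also asserts the idempotence of $\tl,\tr$ and the absorption identities $\ve=\ve\cdot\tl R=\ve\cdot L\tr$, $\eta=R\tl\cdot\eta=\tr L\cdot\eta$, which you invoke but do not derive; these follow immediately from the formulas $\beta\alpha(f)=f\cdot\tl_A$ and $\alpha\beta(g)=\tr_B\cdot g$ by evaluating the regularity conditions at identity morphisms, so this is a harmless omission rather than a gap, but a self-contained write-up should include that one line.
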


\begin{thm}\label{char-reg}{\bf Proposition.} 
For functors $\xymatrix{\A \ar@<0.4ex>[r]^L & \B \ar@<0.4ex>[l]^R}$, 
there are equivalent:
\begin{blist}
\item $(L,R)$ allows for a regular pairing $(L,R,\alpha,\beta)$  
with splitting idempotents $\tl$, $\tr$;
%(notation from \ref{reg-p}, \ref{isplit});
\item there are retractions $\ul L \xra{i}  L\xra{p} \ul L$ and 
$\ul R \xra{i'}  R\xra{p'}\ul R$ such that $(\ul L,\ul R)$ allows for an adjunction.
\end{blist}
\end{thm}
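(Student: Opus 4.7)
The plan is to handle (a)$\Rightarrow$(b) essentially by reference to \ref{reg-p}, which already constructs $\ul\eta = p'p\cdot \eta$ and $\ul\ve = \ve\cdot ii'$ from the regular pairing and verifies the adjunction triangle identities using $\ve = \ve\cdot \tl R$, $\eta = R\tl\cdot \eta$ together with the splitting relations $p\cdot i = I_{\ul L}$, $p'\cdot i' = I_{\ul R}$. All real work lies in the converse direction.

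For (b)$\Rightarrow$(a), I would transport the adjunction data through the retractions by setting
\[ \eta := i'i\cdot \ul\eta : I_\A \to RL, \qquad \ve := \ul\ve\cdot pp' : LR \to I_\B, \]
and taking $\alpha,\beta$ to be the associated natural pairing maps from \ref{pairings}, namely $\alpha(f) = Rf\cdot \eta$ and $\beta(g) = \ve\cdot Lg$. It then remains to show that the idempotents $\tl = \ve L\cdot L\eta$ and $\tr = R\ve\cdot \eta R$ produced by this pairing coincide with $i\cdot p$ and $i'\cdot p'$ (so the given retractions are the required splittings), and that the pairing is regular.

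For the first, expanding $\tl$ yields a composite of several whiskered morphisms; a sequence of interchange-law moves brings the inner $p'$ next to $i'$, where the retraction $p'\cdot i' = I_{\ul R}$ collapses them, and a further sequence brings $\ul L\ul\eta$ next to $\ul\ve\ul L$, where the triangle identity $\ul\ve\ul L\cdot \ul L\ul\eta = I_{\ul L}$ reduces everything to $i\cdot p$. The computation of $\tr = i'\cdot p'$ is strictly dual, using the other retraction together with $\ul R\ul\ve\cdot \ul\eta\ul R = I_{\ul R}$. In particular both idempotents split through $\ul L$ and $\ul R$ as required.

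Regularity $\alpha\beta\alpha = \alpha$ and $\beta\alpha\beta = \beta$ reduces, via the definitions of $\alpha,\beta$ and naturality of $\eta,\ve$, to the two identities $R\tl\cdot \eta = \eta$ and $\ve\cdot L\tr = \ve$. Both are immediate once $\tl = i\cdot p$ and $\tr = i'\cdot p'$ are known: for instance $R\tl\cdot \eta = Ri\cdot Rp\cdot Ri\cdot i'\ul L\cdot \ul\eta = Ri\cdot i'\ul L\cdot \ul\eta = \eta$ by $Rp\cdot Ri = R(p\cdot i) = I_{R\ul L}$, and the dual is symmetric. The main obstacle is not conceptual but notational: the $\tl$-computation has to thread several whiskerings past one another in the correct order, and diagrammatic bookkeeping is what keeps it routine. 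Every cancellation that appears is either a splitting identity or an adjunction triangle identity, so no further hypotheses are needed.
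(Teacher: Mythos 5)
Your proof is correct and follows essentially the same route as the paper: both directions hinge on transporting the adjunction through the retractions, with (a)$\Rightarrow$(b) delegated to \ref{reg-p} and (b)$\Rightarrow$(a) defining the new pairing by conjugation with $i,p,i',p'$. The only cosmetic difference is that you work at the level of the unit $\eta=i'i\cdot\ul\eta$ and counit $\ve=\ul\ve\cdot pp'$ and carry out the whiskering computations explicitly, whereas the paper phrases the same definition via a commutative diagram of $\Mor$-sets and declares the verification routine; your identities $\tl=i\cdot p$, $\tr=i'\cdot p'$ and the reductions $R\tl\cdot\eta=\eta$, $\ve\cdot L\tr=\ve$ are exactly the checks the paper leaves implicit.
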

\begin{proof} (a)$\Ra$(b) The data from \ref{reg-p} yield an 
adjunction $(\ul L,\ul R,\ul \alpha,\ul\beta)$ and the commutative diagram
$$\xymatrix{
\Mor_\B(L(A),B) \ar[r]^{\alpha} \ar[d]_{\Mor(i_A,B)} &
   \Mor_\A(A,R(B))  \ar[r]^{\beta} \ar[d]^{\Mor(A,p'_B)} &
   \Mor_\B(L(A),B) \ar[d]^{\Mor(i_A,B)} \\
\Mor_\B(\ul L(A),B) \ar[r]^{\ul\alpha}   &
   \Mor_\A(A,\ul R(B))  \ar[r]^{\ul\beta}   &
   \Mor_\B(\ul L(A),B)  .}
$$

(b)$\Ra$(a) Given an adjunction $(\ul L,\ul R,\ul \alpha,\ul\beta)$ and 
 retracts $\ul L\xra{i} L\xra{p}\ul L$ and  $\ul R\xra{i'} R\xra{p'}\ul R$,
the above diagram tells us how to define (new) $\alpha$ and $\beta$ to get 
 commutativity. Then it is routine to check that  $(L,R,\alpha,\beta)$ is  
a regular pairing and the resulting idempotents are split 
by $(p,i)$ and $(p',i')$, respectively.
\end{proof}

Now assume that $(L,R,\alpha,\beta)$ and $(R,L,\ta,\tb)$ are regular pairings. 
Then  $\tl$, $\ttl$ are two natural transformations on $L$ 
and $\tr$, $\ttr$ are two natural transformations on $R$.  
We are interested in the case when they coincide.
Applying \ref{char-reg} and its dual yields:
 
\begin{thm}\label{Frob-pair}{\bf Proposition.} 
For functors $\xymatrix{\A \ar@<0.4ex>[r]^L & \B \ar@<0.4ex>[l]^R}$, 
there  are equivalent:
\begin{blist}
\item $(L,R)$ allows for regular pairings $(L,R,\alpha,\beta)$  
        and $(R,L,\ta,\tb)$
with splitting idempotents $\tl=\ttl$, $\tr=\ttr$;
%(notation from \ref{reg-p}, \ref{isplit});
\item there are retractions $\ul L \xra{i}  L\xra{p} \ul L$ and 
$\ul R \xra{i'}  R\xra{p'}\ul R$ such that $(\ul L,\ul R)$ 
and $(\ul R,\ul L)$
allow  for adjunctions, that is, $(\ul L,\ul R)$ is a Frobenius pair of functors.
\end{blist}
\end{thm}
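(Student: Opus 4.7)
The plan is to deduce Proposition \ref{Frob-pair} by applying Proposition \ref{char-reg} twice, once in the given form and once in its dual form, and then matching the two resulting splittings so that a single pair $(\ul L,\ul R)$ carries both adjunctions.

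For the direction (a)$\Ra$(b), I would start with the regular pairing $(L,R,\alpha,\beta)$ and invoke \ref{char-reg} to produce retractions $\ul L\xra{i} L\xra{p}\ul L$ and $\ul R\xra{i'}R\xra{p'}\ul R$ splitting $\tl$ and $\tr$, together with an adjunction $(\ul L,\ul R,\ul\alpha,\ul\beta)$. Then I would apply the dual of \ref{char-reg} to the regular pairing $(R,L,\ta,\tb)$: this yields retractions splitting $\ttl$ and $\ttr$, together with an adjunction $(\ul R,\ul L,\ul{\ta},\ul{\tb})$. The hypothesis $\tl=\ttl$, $\tr=\ttr$ means that both constructions split the same pair of idempotents. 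Since a split idempotent determines its splitting up to a unique isomorphism of the image object, I would use these isomorphisms to transport the second adjunction onto the same retractions $(p,i)$, $(p',i')$ as the first. This gives simultaneously $(\ul L,\ul R)$ and $(\ul R,\ul L)$ as adjoint pairs, i.e.\ a Frobenius pair.

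For the direction (b)$\Ra$(a), I would reverse the construction. Given the retractions and the two adjunctions, the implication (b)$\Ra$(a) of \ref{char-reg} manufactures a regular pairing $(L,R,\alpha,\beta)$ from $(\ul L,\ul R)$ and $(p,i),(p',i')$, while its dual manufactures a regular pairing $(R,L,\ta,\tb)$ from $(\ul R,\ul L)$ using the same retractions. Because both regular pairings are built from the \emph{same} $(p,i)$ and $(p',i')$, the idempotents $\tl$ and $\ttl$ both equal $i\cdot p$, and $\tr$ and $\ttr$ both equal $i'\cdot p'$, so the coincidence of idempotents is automatic and the splittings are exhibited by construction.

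The main obstacle is the matching step in (a)$\Ra$(b): one must be careful that the two invocations of \ref{char-reg} do not produce a priori different retractions for the same idempotent. I would handle this by first fixing the splitting of $\tl=\ttl$ (and of $\tr=\ttr$) and then verifying that the formulas from \ref{reg-p} applied to the pairing $(R,L,\ta,\tb)$ still define an adjunction $(\ul R,\ul L)$ on these fixed retractions — a routine check using the triangular identities $\ul\ve\ul L\cdot \ul L\ul\eta=I_{\ul L}$, $\ul R\ul\ve\cdot \ul\eta\ul R=I_{\ul R}$ together with their tilde analogues coming from the second pairing. Everything else reduces to the naturality and idempotency identities already recorded in \ref{reg-p} and \ref{char-reg}, so no new calculation beyond what proves \ref{char-reg} is required.
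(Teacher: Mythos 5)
Your proposal is correct and is essentially the paper's own argument: the paper proves \ref{Frob-pair} simply by invoking \ref{char-reg} and its dual, exactly as you do. Your extra care in (a)$\Ra$(b) about transporting the second adjunction along the unique isomorphism between two splittings of the same idempotent, and your observation that in (b)$\Ra$(a) the equalities $\tl=\ttl=i\cdot p$ and $\tr=\ttr=i'\cdot p'$ hold by construction, are precisely the details the paper leaves implicit.
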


\begin{thm} \label{w-comonad} {\bf Remark.} \em
Let $(G,\delta,\ve)$ be a non-counital comonad on the category $\A$ 
with quasi-unit $\ve$. For 
the Eilenberg-Moore category $\uA\!^G$ of non-counital $G$-comodules  
there are the free and the forgetful functors
$$\phi^G:\A\to \uA\!^G,\; A\mapsto (G(A),\delta_A),  
\qquad U^G:\uA\!^G\to \A, \; (A,\omega)\mapsto A.$$

There is a pairing
$(\phi^G,U^G,\alpha^G,\beta^G)$ with the maps, for $X\in \A$, 
$(A,\omega) \in \uA\!^G$,
$$\begin{array}{rl}
\alpha^G:\Mor_\A(U^G(A),X) \to \Mor^G(A,\phi^G(X)), & f\mapsto G(f)\cdot\omega ,\\[+1mm]
\beta^G: \Mor^G(A,\phi^G(X))\to\Mor_\A(U^G(A),X), & g\mapsto \ve_X \cdot g.
\end{array}
$$ 
Compatible $G$-comodules $\up:A\to G(A)$ are those with  $\alpha^G\beta^G(\up)=\up$.

 $(G,\delta,\ve)$ is a weak comonad if and only if $(\phi^G,U^G,\alpha^G,\beta^G)$ 
is a regular pairing with $\beta^G$ symmetric (see \cite[Proposition 4.4]{Wi-reg}).
\smallskip

Similar characterisations hold  for weak monads (\cite[Proposition 3.4]{Wi-reg}).  
\end{thm}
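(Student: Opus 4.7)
The plan is a direct reduction to Proposition \ref{char-reg} and the symmetric statement obtained by reversing the roles of the two pairings, namely applying \ref{char-reg} with $(R,L,\ta,\tb)$ in place of $(L,R,\alpha,\beta)$. Since the idempotents attached to the two regular pairings are asserted to coincide, the same splitting data can serve for both applications, and the two resulting adjunctions will live on the same retracts.

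For the direction (a)$\Ra$(b), I start from the data of part (a). Invoking Proposition \ref{char-reg} for the regular pairing $(L,R,\alpha,\beta)$ with the given splittings $L\xra{p}\ul L\xra{i}L$ of $\tl$ and $R\xra{p'}\ul R\xra{i'}R$ of $\tr$, I obtain an adjunction $(\ul L,\ul R,\ul\alpha,\ul\beta)$ as constructed in \ref{reg-p}. Because $\ttl=\tl$ and $\ttr=\tr$ by hypothesis, the \emph{same} retractions $(p,i)$ and $(p',i')$ split the idempotents arising from the second pairing $(R,L,\ta,\tb)$. Applying the dual of Proposition \ref{char-reg} to this second pairing with these retractions produces an adjunction $(\ul R,\ul L,\ul\ta,\ul\tb)$ on the same pair $(\ul L,\ul R)$, which is exactly the Frobenius condition.

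For (b)$\Ra$(a), conversely, start with a Frobenius pair $(\ul L,\ul R)$ together with the specified retractions $(i,p)$ and $(i',p')$. The implication (b)$\Ra$(a) of Proposition \ref{char-reg}, applied to the adjunction $(\ul L,\ul R)$, produces a regular pairing $(L,R,\alpha,\beta)$ whose canonical idempotents are $\tl=i\cdot p$ and $\tr=i'\cdot p'$. The dual, applied to the adjunction $(\ul R,\ul L)$ using the same retractions, produces a regular pairing $(R,L,\ta,\tb)$ whose canonical idempotents are $\ttl=i\cdot p$ and $\ttr=i'\cdot p'$, so $\tl=\ttl$ and $\tr=\ttr$ as required.

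The only delicate point — and the one I would verify explicitly — is that in (a)$\Ra$(b) both applications of \ref{char-reg} can be carried out with a single choice of retracts; this is automatic precisely because the hypothesis $\tl=\ttl$, $\tr=\ttr$ identifies the idempotents whose splittings would otherwise be chosen independently. Once that compatibility is recorded, the argument is a straightforward invocation of the already-established equivalence in \ref{char-reg} together with its dual, with no further calculation required.
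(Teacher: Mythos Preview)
Your proof proposal does not address the given statement at all. The statement (Remark~\ref{w-comonad}) concerns the free/forgetful pairing $(\phi^G,U^G,\alpha^G,\beta^G)$ attached to a non-counital comonad $(G,\delta,\ve)$ on $\A$, and its main content is the assertion that $(G,\delta,\ve)$ is a weak comonad precisely when this pairing is regular with $\beta^G$ symmetric. The paper supplies no proof here; it refers to \cite[Proposition 4.4]{Wi-reg}.

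What you have written is instead a proof of Proposition~\ref{Frob-pair}: the equivalence between (a) two regular pairings $(L,R,\alpha,\beta)$ and $(R,L,\ta,\tb)$ with coinciding splitting idempotents $\tl=\ttl$, $\tr=\ttr$, and (b) the retracts $(\ul L,\ul R)$ forming a Frobenius pair. For \emph{that} proposition your argument is correct and is exactly the paper's own one-line justification (``Applying \ref{char-reg} and its dual yields:''); you have merely spelled it out. But none of this touches Remark~\ref{w-comonad}: there is no second pairing $(R,L,\ta,\tb)$ in sight, no Frobenius condition, and no retract functors. The actual content to be verified is that the identities $\alpha^G\beta^G\alpha^G=\alpha^G$, $\beta^G\alpha^G\beta^G=\beta^G$, together with the symmetry condition $\phi^G\tr^G = \tl^G U^G$ on $\beta^G$, hold if and only if $(G,\delta,\ve)$ satisfies the three weak-comonad axioms listed in \ref{q-comod}. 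That computation is what the cited reference carries out, and your proposal contains no trace of it.
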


\begin{thm}\label{symm-com}{\bf Related comonads.}  
Let $(L,R,\alpha,\beta)$  be a regular pairing (see {\rm\ref{reg-p}}). 

\begin{zlist}
\item For the coproduct
$$\delta: LR \xra{L\eta R} LRLR \xra{\tl RL\tr} LRLR,$$
 $(LR,\delta,\ve)$ is a   weak comonad  on $\B$.
If $\beta$ is symmetric,    $\delta = L\eta R$.

\item $\tl\tr:LR\to LR$ induces 
  morphisms of non-counital comonads respecting the quasi-counits, 
\begin{center}
 $(LR,L\eta R,\ve)\to (LR,L\eta R,\ve)$ and $(LR,L\eta R,\ve)\to (LR,\delta,\ve)$,   
\end{center}
and an endomorphism of weak comonads
$(LR,\delta,\ve)\to (LR,\delta,\ve)$. 
\end{zlist}
\end{thm}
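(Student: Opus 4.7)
My plan is to first rewrite $\delta = \tl RL\tr\cdot L\eta R$ in a form better suited to computation, and then exploit the regularity identities of \ref{reg-p} as absorption rules that reduce the weak-comonad axioms to manipulations already available for the non-counital comonad $(LR, L\eta R, \ve)$ from \ref{pairings}(i). Writing $\tl RL\tr = \tl RLR \cdot LRL\tr$ by Godement, naturality of $\tl:L\to L$ at the morphism $\eta_{R(B)}:R(B)\to RLR(B)$ in $\A$ gives $\tl RLR\cdot L\eta R = L\eta R\cdot \tl R$, and analogously $LRL\tr\cdot L\eta R = L\eta R\cdot L\tr$. The upshot is the compact expression
\[
 \delta \;=\; L\eta R \cdot \kappa, \qquad \kappa \;:=\; \tl R\cdot L\tr \;=\; L\tr\cdot \tl R,
\]
and $\kappa$ is idempotent because the horizontally commuting idempotents $\tl$, $\tr$ are.

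From \ref{reg-p}, the identities $\ve\cdot\tl R = \ve = \ve\cdot L\tr$ and $\eta = R\tl\cdot\eta = \tr L\cdot\eta$ translate, after postcomposing with $L$ and $R$, into the absorption rules
\[
 \ve\cdot\kappa = \ve, \qquad LR\tl R\cdot L\eta R = L\eta R, \qquad L\tr LR\cdot L\eta R = L\eta R,
\]
which do most of the work. Coassociativity of $\delta$ follows by expanding both $LR\delta\cdot\delta$ and $\delta LR\cdot\delta$: pulling $\kappa$ across $L\eta R$ via the sliding identities, collapsing the intermediate $LR\kappa$ or $\kappa LR$ factors using the absorption rules together with $\tl R\cdot\kappa = \kappa = L\tr\cdot\kappa$, reduces both sides to $LRL\eta R\cdot L\eta R\cdot \kappa = L\eta RLR\cdot L\eta R\cdot \kappa$, equal by the (already available) coassociativity of $L\eta R$. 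The three weak-comonad axioms come out similarly: a single computation yields
\[
 LR\ve\cdot\delta \;=\; L\tr\cdot\kappa \;=\; \kappa \;=\; \tl R\cdot\kappa \;=\; \ve LR\cdot\delta,
\]
which is the third axiom; postcomposing with $\ve$ then gives the first via $\ve\cdot\kappa=\ve$; the second axiom follows from $LR\ve LR\cdot LRL\eta R = LR(\ve LR\cdot L\eta R) = LR\tl R$ combined with the absorption $LR\tl R\cdot L\eta R = L\eta R$.

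For the symmetric case, $\tl R = L\tr$ forces $\kappa = \tl R$, and then $L\eta R\cdot\tl R = L\eta R\cdot L\tr = LRL\tr\cdot L\eta R = LR\tl R\cdot L\eta R = L\eta R$, using symmetry a second time and the absorption rule; hence $\delta = L\eta R$. For part (2), each of the three morphism claims reduces to $\ve\cdot\kappa = \ve$ (already known) together with compatibility between $\kappa$ and the relevant comultiplications. The crucial calculation is
\[
 \kappa\kappa\cdot L\eta R \;=\; \kappa LR\cdot L\eta R\cdot L\tr \;=\; L\eta R\cdot \tl R\cdot L\tr \;=\; L\eta R\cdot\kappa \;=\; \delta,
\]
which, combined with $L\eta R\cdot\kappa = \delta$ (by definition) and $\delta\cdot\kappa = L\eta R\cdot\kappa^{2} = \delta$ (idempotency of $\kappa$), delivers all three required compatibilities at once.

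The main obstacle is purely notational: tracking carefully which of the four copies of $L$ or $R$ inside $LRLR$ each occurrence of $\tl$ or $\tr$ acts on, and checking that every naturality move is applied at a morphism in the correct category ($\A$ or $\B$). Once the formula $\delta = L\eta R\cdot\kappa$ and the three absorption rules have been extracted, the rest of the verification is short and mechanical.
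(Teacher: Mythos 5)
Your proof is correct and follows essentially the same route as the paper: the organizing identity $\delta = L\eta R\cdot\tl\tr$ is precisely the left-hand square of the commutative diagram the paper uses, and your computations $LR\ve\cdot\delta=\tl\tr=\ve LR\cdot\delta$ and $\tl\tr\,\tl\tr\cdot L\eta R=\delta=\delta\cdot\tl\tr$ are exactly the identities recorded by the paper's ``direct verification'' and by the outer paths of its diagram. You merely supply more detail (coassociativity and the second weak-comonad axiom, which the paper leaves implicit), so no further comparison is needed.
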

\begin{proof} 
Direct verification shows 
 $\ve LR\cdot \delta =\tl\tr = LR\ve \cdot \delta$, 
the conditions for a weak comonad. 
For the next claims, consider the commutative diagram
$$\xymatrix{
 LR \ar[r]^{\tl\tr} \ar[d]_{L\eta R} & LR  \ar[d]_{L\eta R} \ar[dr]^{\delta} \\
LRLR \ar[r]^{\tl RL \tr} \ar[dr]_{\tl\tr\tl\tr} & 
     LRLR \ar[r]^{\tl RL \tr} \ar[d]^{L\tr\tl R} & 
     LRLR \ar[d]^{L\tr\tl R} \\
 & LRLR \ar[r]_{\tl RL \tr} & LRLR \,;}
$$
the left hand part proves the assertion about the first morphism and the outer paths 
show the properties of the second and third  morphisms. 
\end{proof}

\pagebreak[3]

\begin{thm}\label{symm-mon}{\bf Related monads.}  
Let $(L,R,\alpha,\beta)$  be a regular pairing (see {\rm \ref{reg-p}}). 
\begin{zlist}
\item For the product
$$m: RLRL\xra{\tr LR \tl} RLRL \xra{R\ve L} RL,$$
 $(RL,m,\eta)$ is a weak monad on $\A$.
If $\alpha$ is symmetric,  $m = R\ve L$. 
\item $\tr \tl:RL\to RL $ 
yields morphisms of non-unital monads respecting the quasi-units, 
\begin{center}
 $(RL,R\ve L,\eta)\to (RL,R\ve L,\eta)$ and $(RL,R\ve L,\eta)\to (RL,m,\eta)$.
\end{center}
and an endomorphism of weak monads   $(RL,m,\eta)\to  (RL,m,\eta)$.
\end{zlist}
\end{thm}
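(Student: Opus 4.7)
\emph{Approach.} The statement is the exact formal analogue of Proposition \ref{symm-com}, obtained by passing from the comonad $LR$ with coproduct $L\eta R$ (and quasi-counit $\ve$) to the monad $RL$ with product $R\ve L$ (and quasi-unit $\eta$). So I would mirror that proof: first verify directly that $(RL,m,\eta)$ satisfies the three axioms of a weak monad from \ref{q-mod}, then read part (2) off the analogous commutative diagram.

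\emph{Core calculation for (1).} The central step is the pair of equalities
$$m\cdot RL\eta \;=\; \tr\tl \;=\; m\cdot \eta RL,$$
dual to $\ve LR\cdot\delta = \tl\tr = LR\ve\cdot\delta$, which directly yields the weak-monad axiom $m\cdot RL\eta = m\cdot\eta RL$. For the first equality I would feed $RL\eta$ into $\tr LR\tl$, collapse the $RLR\tl$ factor by $R\tl\cdot\eta = \eta$ from \ref{reg-p}, slide the remaining $\tr$ past $R\ve L$ by naturality, and use $\ve L\cdot L\eta = \tl$ to reach $\tr L\cdot R\tl = \tr\tl$. The second equality is parallel, using naturality of $\eta$, the identity $\tr = R\ve\cdot\eta R$ from \ref{pairings}, and idempotency of $\tr$. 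The same manipulations also produce the alternative formula $m = \tr\tl\cdot R\ve L$. From this, $\eta = m\cdot RL\eta\cdot\eta$ reduces to $\tr\tl\cdot\eta = \eta$ (from \ref{reg-p}); the identity $m = m\cdot mRL\cdot RL\eta RL$ becomes $m\cdot(\tr\tl)RL = m$, verified by pushing $\tr_{LRLA}$ through $R\ve_{LA}$ by naturality and then absorbing the leftover $R\tl_{RLA}$ using $\ve\cdot\tl R = \ve$; and associativity of $m$ is inherited from that of the non-unital monad product $R\ve L$ of \ref{pairings}(ii), once the idempotents $\tr LR\tl$ are moved past by naturality. Finally, in the symmetric case $\alpha$ symmetric gives $\tr\tl = R\tl = \tr L$ by \ref{reg-p}, so $m = \tr\tl\cdot R\ve L = R\tl\cdot R\ve L$ collapses to $R\ve L$ because $R\tl$ is absorbed into $R\ve L$ by $\ve\cdot\tl R = \ve$.

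\emph{Part (2) and main obstacle.} For the three morphism statements I would draw the analogue of the diagram used in \ref{symm-com}: a rectangle built from $R\eta L$ on one side and from $\tr LR\tl$, $R\tl\tr L$ on the other, whose left half identifies $\tr\tl$ as an endomorphism of $(RL,R\ve L,\eta)$, whose triangle gives the morphism $(RL,R\ve L,\eta)\to(RL,m,\eta)$, and whose outer path yields the weak-monad endomorphism of $(RL,m,\eta)$; compatibility with the quasi-unit uses $\tr\tl\cdot\eta = \eta$ from \ref{reg-p}. The main obstacle throughout is bookkeeping: each occurrence of $\tr,\tl,\eta,\ve$ must be tagged with its correct slot in $RLRL$ or its iterates and repeatedly shifted by naturality. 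Conceptually, however, the entire argument reduces to associativity of $R\ve L$ from \ref{pairings}(ii) combined with the two absorption identities $R\tl\cdot\eta = \eta$ and $\ve\cdot\tl R = \ve$ supplied by the regular pairing.
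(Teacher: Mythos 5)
Your proposal matches the paper's own (very terse) proof, which likewise reduces everything to the key identity $m\cdot \eta RL=\tr\tl=m\cdot RL\eta$ and handles part (2) by the diagram dual to the one in \ref{symm-com}; the extra verifications you sketch (associativity, $m\cdot(\tr\tl)RL=m$, the collapse to $R\ve L$ in the symmetric case) are exactly the details the paper leaves to the reader, and they go through as you describe. One typo: in your description of the part-(2) diagram, ``$R\eta L$'' should read ``$R\ve L$''.
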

\begin{proof}
One easily verifies 
  $m\cdot    \eta RL= \tr\tl = m\cdot  RL  \eta $,
the condition for a weak monad.
The other claims are shown similarly to  \ref{symm-com} 
\end{proof}

Combining the preceding observations we have shown:
 
\begin{thm}\label{isplit-b}{\bf Proposition.}  
Let $(L,R, \alpha,\beta)$ be a regular pairing and assume the 
idempotents $\tl$ and $\tr$ to split. With the notation from {\rm\ref{reg-p}}, 
$(\ul L \ul R, \ul L \ul\eta \ul R, \ul\ve)$ is a comonad on $\B$ and 
$(\ul R\ul L, \ul R \ul\ve \ul L, \ul\eta)$ is monad on $\A$. Then, 
\begin{zlist}
\item  the natural transformation $pp':LR \to \ul L\ul R$ induces 
  morphisms of non-counital comonads 
$(LR,L\eta R,\ve) \to (\ul L \ul R, \ul L \ul\eta \ul R, \ul\ve) $, and 
morphisms of weak comonads 
 $(LR,\delta,\ve)\to (\ul L \ul R, \ul L \ul\eta \ul R, \ul\ve)$; 
\item
 the natural transformation $p'p:RL \to \ul R\ul L$ induces 
 morphisms  of non-unital monads
$(RL,R\ve L,\eta)\to (\ul R\ul L, \ul R \ul\ve \ul L, \ul\eta)$\; and \;
 morphisms  of weak monads
 $(RL,m,\eta) \to(\ul R\ul L, \ul R \ul\ve \ul L, \ul\eta)$.
\end{zlist}
\end{thm}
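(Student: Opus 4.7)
The two parts of the proposition are dual; I treat (1) explicitly and indicate the dual argument for (2). The comonad $(\ul L \ul R, \ul L \ul\eta \ul R, \ul\ve)$ and the monad $(\ul R \ul L, \ul R \ul\ve \ul L, \ul\eta)$ are the standard constructions attached to the adjunction $(\ul L, \ul R, \ul\alpha, \ul\beta)$ furnished by \ref{reg-p}; since $\ul\eta = p'p\cdot\eta$ and $\ul\ve = \ve\cdot ii'$ by construction there, their comonad/monad axioms need no separate verification here. The real work is to show that $pp'$ and $p'p$ intertwine the structural natural transformations on source and target.

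For the non-counital comonad morphism $pp':(LR, L\eta R, \ve) \to (\ul L \ul R, \ul L \ul\eta \ul R, \ul\ve)$, counit compatibility is immediate: $\ul\ve\cdot pp' = \ve\cdot ii'\cdot pp' = \ve\cdot \tl\tr = \ve$, where $ii'\cdot pp' = \tl\tr$ by interchange and the last step is one of the identities recorded in \ref{reg-p}. The coproduct compatibility $\ul L \ul\eta \ul R \cdot pp' = (pp')(pp') \cdot L\eta R$ is then verified by substituting $\ul\eta = p'p\cdot \eta$, expanding both horizontal composites at a generic object $B$, and applying naturality of $p$ against the morphisms arising from $p'$, from $\eta$, and from $R$-images of $p$; the two sides collapse to a common composite.

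To upgrade to the weak comonad morphism $pp':(LR, \delta, \ve) \to (\ul L \ul R, \ul L \ul\eta \ul R, \ul\ve)$ with $\delta = \tl R L \tr \cdot L\eta R$ from \ref{symm-com}, it suffices by the preceding paragraph to establish $(pp')(pp')\cdot \tl R L \tr = (pp')(pp')$. Pushing the outer $\tl$ and the inner $\tr$ through the factors of $pp'$ via naturality reduces this to the retract identities $p\cdot \tl = p\cdot ip = p$ and $p'\cdot \tr = p'\cdot i'p' = p'$, which are immediate. Part (2) is obtained by the symmetric argument applied to $p'p$: the unit condition $p'p\cdot \eta = \ul\eta$ is the very definition of $\ul\eta$, and the product conditions $\ul R \ul\ve \ul L \cdot (p'p)(p'p) = p'p\cdot R\ve L$ (non-unital) and its analogue with $m = R\ve L\cdot \tr L R \tl$ from \ref{symm-mon} in place of $R\ve L$ (weak) dualise the above, the latter via the absorption identity $(p'p)(p'p)\cdot \tr L R \tl = (p'p)(p'p)$.

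The main obstacle is notational bookkeeping: expanding the iterated horizontal composites $(pp')(pp')$, $(p'p)(p'p)$ and the whiskerings $\tl RL\tr$, $\tr LR\tl$ in components at an object, and keeping track of which $L$, $\ul L$, $R$, $\ul R$ string each factor acts on. Conceptually nothing deep is required: every step is either an instance of naturality of $p$, $p'$, $\eta$ or $\ve$, or an application of the retract identities $pi = I_{\ul L}$, $p'i' = I_{\ul R}$ together with the auxiliary identities for $\tl$, $\tr$, $\ve$, $\eta$ already tabulated in \ref{reg-p}.
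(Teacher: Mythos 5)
Your proof is correct and is essentially the argument the paper intends: the paper gives no separate verification, asserting the proposition follows by ``combining the preceding observations,'' and the combination is exactly your chain of interchange-law computations, the retract identities $p\cdot i=I_{\ul L}$, $p'\cdot i'=I_{\ul R}$, and the identities $\ve=\ve\cdot\tl\tr$, $\ul\eta=p'p\cdot\eta$ from \ref{reg-p}, together with the absorption of $\tl RL\tr$ and $\tr LR\tl$ into $(pp')(pp')$ and $(p'p)(p'p)$. All the individual steps check out.
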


\begin{thm}\label{comp-mod-reg}{\bf Regular pairings and comodules.} \em
Let $(L,R,\alpha, \beta)$ be a regular pairing and  consider the 
weak comonad $(LR,\delta,\ve)$ defined in \ref{symm-com}.
Then a non-counital $(LR,\delta,\ve)$-comodule $(B,\up)$ is 
compatible  (see  \ref{q-comod}) if  
$\up = \ve LR_B \cdot \delta_B \cdot \up = \tr\tl_B \cdot \up$.
 
Write $\ol\B^{LR,\delta}$ for the full subcategory
 of $\uB^{LR,\delta}$ formed by the compatible $(LR,\delta,\ve)$-co\-modules.
For any $B\in \B$,  $(LR(B),\delta_B)$ is a 
compatible $(LR,\delta,\ve)$-comodule, and thus we have a functor
$$\phi^{LR,\delta}:\B \to \ol\B^{LR,\delta}, \quad B\mapsto (LR(B), \delta_B).$$
The obvious forgetful functor $U^{LR,\delta}:\ol\B^{LR,\delta}\to \B$ need not be
 (left) adjoint to $\phi^{LR}$ but $(\phi^{LR},U^{LR,\delta})$ allows for 
a regular pairing (see \ref{w-comonad}).  %\cite[4.2]{Wi-reg}).   

Denoting by $\uB^{LR,\eta}$ the non-counital comodules for $(LR,L \eta R, \ve)$, 
the natural transformation  $(LR,L \eta R, \ve)\to (LR,\delta, \ve)$
induced by $\tl\tr$ (see \ref{symm-com}) defines a functor 
$t_{\tl\tr}:\uB^{LR,\eta }\to\uB^{LR,\delta}$. It is easy to see that hereby the image of 
any comodule in $\uB^{LR,\eta }$ is a compatible comodule in $\uB^{LR,\delta}$
leading to a commutative diagram
$$\xymatrix{ \B \ar[rr]^{\phi^{LR,\eta}}  \ar[drr]_{\phi^{LR,\delta}}  &&
    \uB^{LR,\eta } \ar[d]^{t_{\tl\tr}\quad} \\
 &&\ol\B^{LR,\delta} . } $$

In case the idempotents $\tl$ and $\tr$ are splitting,
we get the splitting natural transformation  $pp':LR\to \ul L \ul R$ (from \ref{reg-p}) 
which induces functors $\ol\B^{LR,\delta} \to \ol\B^{\ul L\ul R}$ and 
$\ol\B^{LR,\eta} \to \ol\B^{\ul L\ul R}$, also denoted by $pp'$, 
with commutative diagram
$$\xymatrix{  
    \uB^{LR ,\eta  } \ar[drr]^{pp'}  \ar[rr]^{t_{\tl\tr}} &&\ol\B^{LR,\delta}  \ar[d]^{pp'}  \\
\B \ar[u]^{\phi^{LR,\eta}} \ar[rr]_{\phi^{\ul L\ul R}}
 &&\ol\B^{\ul L\ul R}   . } $$

Since $\ul L\ul R$ is a comonad, every non-counital $\ul L\ul R$-comodule is compatible, 
that is $\ol\B^{\ul L\ul R} = \uB^{\ul L\ul R} $, but need not be counital. 
\end{thm}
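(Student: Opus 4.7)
The plan is to verify each of the four assertions in the block in turn, in each case reducing to results already in hand: the regular-pairing identities of \ref{reg-p}, the weak-comonad structure on $(LR,\delta,\ve)$ from \ref{symm-com}(1), and the general correspondence between weak comonads and regular pairings recorded in \ref{w-comonad}. No genuinely new construction is required; what is required is careful bookkeeping with whiskerings.

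For the compatibility criterion I would substitute the explicit formula $\delta = \tl RL\tr\cdot L\eta R$ of \ref{symm-com}(1) into $\ve LR_B\cdot \delta_B$, apply naturality of $\ve$ with respect to $L\tr_B:LR(B)\to LR(B)$ to move $L\tr$ past $\ve$, then apply the regular-pairing identity $\ve\cdot \tl R = \ve$ of \ref{reg-p} to cancel the remaining $\tl_{RLR(B)}$, and finally recognise $\ve_{LR(B)}\cdot L\eta_{R(B)}=\tl_{R(B)}$. This collapses $\ve LR_B\cdot \delta_B$ to the endomorphism $(\tl\tr)_B=(\tr\tl)_B$ of $LR(B)$ and yields both asserted equalities for $\up$. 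To see that $(LR(B),\delta_B)\in\ol\B^{LR,\delta}$ I apply the weak-comonad identity $\ve LR\cdot \delta\cdot \delta=\delta$ established in the course of proving \ref{symm-com}(1). Regularity of the pairing $(\phi^{LR,\delta},U^{LR,\delta})$ is then immediate from \ref{w-comonad} applied to the weak comonad $(LR,\delta,\ve)$, with $\beta^{LR,\delta}$ symmetric as stated there; the fact that the categorical notion of compatibility coming from \ref{w-comonad} agrees with the concrete condition $\up=\tr\tl_B\cdot\up$ is exactly the computation just carried out.

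For the functor $t_{\tl\tr}$ I would send $(B,\up)\in \uB^{LR,\eta}$ to $(B,\tl\tr_B\cdot \up)$. That $\tl\tr_B\cdot\up$ really is a $(LR,\delta)$-coaction is the content of \ref{symm-com}(2), where $\tl\tr$ was shown to be a morphism of non-counital comonads $(LR,L\eta R)\to (LR,\delta)$; compatibility of the image in $\ol\B^{LR,\delta}$ then follows from the idempotency $\tl\tr\cdot \tl\tr=\tl\tr$ combined with the criterion just derived. For the triangle, a direct substitution using the definition of $\delta$ and naturality of $\tl\tr$ against $L\eta R$ identifies $\tl\tr_{LR(X)}\cdot L\eta R_X$ with $\delta_X$, giving $t_{\tl\tr}\circ \phi^{LR,\eta}=\phi^{LR,\delta}$ on the nose.

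For the splitting case, \ref{isplit-b}(1) provides a morphism of weak comonads $pp':(LR,\delta,\ve)\to (\ul L\ul R,\ul L\ul\eta\ul R,\ul\ve)$ and, independently, of non-counital comonads $(LR,L\eta R,\ve)\to (\ul L\ul R,\ul L\ul\eta\ul R,\ul\ve)$. Pushing coactions along $pp'$ as in the previous step defines the two functors named $pp'$ in the diagram, and compatibility of the image is automatic because $\ul L\ul R$ is a genuine comonad. Commutativity of the outer square reduces to the identity $pp'\cdot L\eta R=\ul L\ul\eta\ul R\cdot pp'$ already invoked in \ref{isplit-b}(1), applied to the free coaction. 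The closing remark $\ol\B^{\ul L\ul R}=\uB^{\ul L\ul R}$ is then immediate: for a genuine comonad the compatibility identity collapses to the coaction axiom itself, while counitality $\ul\ve_B\cdot\up=I_B$ is an independent extra condition that the non-counital category does not impose. The main obstacle I expect is the first step: tracking the four copies of $L$ and $R$ in $\ve LR\cdot \tl RL\tr\cdot L\eta R$ correctly while juggling three different regular-pairing identities and two naturality applications — nothing is deep, but a misplaced whiskering slot here would propagate through every subsequent claim.
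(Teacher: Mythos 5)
Your handling of the first three assertions is sound and matches what the paper leaves to the reader: the collapse of $\ve LR\cdot\delta$ to $\tl\tr$ via $\ve\cdot\tl R=\ve$, naturality of $\ve$, and $\ve L\cdot L\eta=\tl$ is exactly right; compatibility of the free comodule is indeed the second weak-comonad identity (more precisely $LR\,\ve LR\cdot LR\delta\cdot\delta=\delta$, equivalently $\tl\tr LR\cdot\delta=\delta$, rather than the formula you wrote, but that is repairable); and the regular pairing on $(\phi^{LR,\delta},U^{LR,\delta})$ is correctly delegated to \ref{w-comonad}. The definition of $t_{\tl\tr}$ by postcomposition and the compatibility of its images via idempotency are also correct.

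The gap is in your verification of the triangle. You claim that naturality of $\tl\tr$ against $L\eta R$ identifies $(\tl\tr)_{LR(B)}\cdot (L\eta R)_B$ with $\delta_B$. But $(L\eta R)_B=L(\eta_{R(B)})$ is of the form $L(f)$, not $LR(f)$, so naturality of $\tl\tr:LR\to LR$ cannot be applied across it; only $\tl$ can be transported. What the computation actually gives, using $\tr_{LR(B)}\cdot\eta_{R(B)}=\eta_{R(B)}$ (regularity) and naturality of $\tl$, is
$$(\tl\tr)_{LR(B)}\cdot (L\eta R)_B \;=\; (L\eta R)_B\cdot \tl_{R(B)},$$
whereas the top square of the diagram in \ref{symm-com} shows $\delta=L\eta R\cdot\tl\tr$, i.e.
$\delta_B=(L\eta R)_B\cdot\tl_{R(B)}\cdot (L\tr)_B$. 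The two coactions differ by exactly the factor $(L\tr)_B$, which can be removed precisely when $\beta$ is symmetric ($\tl R=L\tr$) --- an assumption \ref{comp-mod-reg} does not make. The discrepancy is real: take $\A=\B$ the category of $k$-modules, $L=R:X\mapsto X\oplus X$ with $i=i'$ and $p=p'$ the first-coordinate inclusion and projection, and the regular (non-symmetric) pairing produced by \ref{char-reg} from the identity adjunction $(\ul L,\ul R)=(I_\A,I_\A)$; then $\eta_A(a)=((a,0),(0,0))$, $\ve_B((a,b),(c,d))=a$, and on the basis $e_{jm}$ of $LR(B)$ one finds $(\tl\tr)_{LR(B)}\cdot(L\eta R)_B(e_{12})=e_{1112}\ne 0=\delta_B(e_{12})$. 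So the two coactions agree only after composition with the idempotent ($T\cdot\tl\tr=\delta$ where $T$ denotes the $t_{\tl\tr}$-image coaction), that is, in the local idempotent closure sense of Remark \ref{rem-cat}, not on the nose as your argument asserts. The same looseness infects your reduction of the second diagram: pushing coactions along $pp'$ fixes the underlying object, so $pp'\circ\phi^{LR,\eta}(B)$ lives on $LR(B)$ while $\phi^{\ul L\ul R}(B)$ lives on $\ul L\ul R(B)$; strict commutativity there likewise requires splitting the resulting idempotent coaction, not just the identity $(pp'\ast pp')\cdot L\eta R=\ul L\ul\eta\ul R\cdot pp'$. In short, precisely the whiskering-slot hazard you flagged at the outset is where the proof fails, and no rearrangement of the regular-pairing identities will close it without either assuming $\beta$ symmetric or weakening the asserted equalities to equality modulo the idempotents.
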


\begin{thm}\label{rem-cat}{\bf Remark.} \em 
As pointed out by an anonymous referee,
a regular pairing $(L,R,\alpha,\beta)$ defined in \ref{reg-p} is in fact the same as an
adjunction in the local idempotent closure $\overline{{\rm Cat}}$ of the 2-category
${\rm Cat}$ of 
categories and hence corresponds to a comonad in $\overline{\rm Cat}$.
This lives on the 1-cell $(LR,\tl\tr)$ with coproduct $\tl RL\tr\cdot  L\eta R$
and counit $\ve$ (see \cite{BoLaSt-I}).
In this approach, similar to Proposition \ref{Frob-pair}, the properties
of the weak comonad $LR$ are described by properties of a related comonad $\ul L\ul R$.

We are also interested in the modules and comodules induced directly  by $RL$ and
$LR$, respectively.
\end{thm}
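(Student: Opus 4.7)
The plan is to unpack the definition of $\overline{\rm Cat}$ and to check that the data of a regular pairing are tautologically those of an adjunction inside it; the comonad assertion then comes for free from general 2-category theory. Recall that a 1-cell of $\overline{\rm Cat}$ is a pair $(F,e)$ with $F$ a functor and $e\colon F\to F$ an idempotent natural transformation; a 2-cell $(F,e)\to(F',e')$ is a natural transformation $\phi$ with $e'\cdot\phi=\phi=\phi\cdot e$; horizontal composition of 1-cells is $(G,e')(F,e)=(GF,\,e'F\cdot Ge)$; and the identity 2-cell of the 1-cell $(F,e)$ is $e$ itself.

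Given a regular pairing $(L,R,\alpha,\beta)$, Proposition \ref{reg-p} asserts that $\tl$ and $\tr$ are idempotent, so $(L,\tl)\colon\A\to\B$ and $(R,\tr)\colon\B\to\A$ are legitimate 1-cells of $\overline{\rm Cat}$. The same proposition yields $\tr L\cdot\eta=R\tl\cdot\eta=\eta$ and $\ve\cdot\tl R=\ve\cdot L\tr=\ve$, which are precisely the compatibility conditions saying that $\eta\colon(I_\A,I)\to(RL,\tr L\cdot R\tl)$ and $\ve\colon(LR,\tl R\cdot L\tr)\to(I_\B,I)$ qualify as 2-cells of $\overline{\rm Cat}$. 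The triangle identities for the would-be adjunction $(L,\tl)\dashv(R,\tr)$ require $\ve L\cdot L\eta$ and $R\ve\cdot\eta R$ to be the identity 2-cells on $(L,\tl)$ and $(R,\tr)$, i.e.\ to equal $\tl$ and $\tr$; but this is just the definition of $\tl$ and $\tr$ recalled in \ref{pairings}. Conversely, from any adjunction $(L,\tl)\dashv(R,\tr)$ in $\overline{\rm Cat}$ with unit $\eta$ and counit $\ve$, the formulas $\alpha(f)=R(f)\cdot\eta$ and $\beta(g)=\ve\cdot L(g)$ define a pairing, and regularity $\alpha\beta\alpha=\alpha$, $\beta\alpha\beta=\beta$ transcribes the triangle identities in $\overline{\rm Cat}$ where the ``identities'' on $L$ and $R$ are $\tl$ and $\tr$ rather than $I_L, I_R$.

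For the comonad statement, one invokes the general fact that an adjunction in any 2-category induces a comonad on the composite 1-cell. In $\overline{\rm Cat}$ this composite is $(L,\tl)(R,\tr)=(LR,\tl R\cdot L\tr)$, matching the 1-cell $(LR,\tl\tr)$ of the statement; the counit is $\ve$; and the coproduct is the whiskering $(L,\tl)\,\eta\,(R,\tr)$, whose underlying 2-cell in Cat must be compatible with the source idempotent $\tl R\cdot L\tr$ and with the target idempotent of the composite $(LR,\tl\tr)(LR,\tl\tr)$. A short naturality calculation, using $L\eta R\cdot\tl R=\tl RLR\cdot L\eta R$ and $\eta R\cdot\tr=RL\tr\cdot\eta R$ together with idempotency of $\tl,\tr$, shows that the representative satisfying both compatibilities is exactly $\tl RL\tr\cdot L\eta R$, which is precisely the $\delta$ constructed in \ref{symm-com}. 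The main obstacle is this bookkeeping of whiskered idempotents, but in each instance it collapses via the four relations of \ref{reg-p}; the concluding remark, that properties of the weak comonad $(LR,\delta,\ve)$ are encoded by the genuine comonad $(\ul L\ul R,\ul L\ul\eta\ul R,\ul\ve)$ obtained after splitting, then follows in direct analogy with Proposition \ref{Frob-pair} by applying Proposition \ref{isplit-b}.
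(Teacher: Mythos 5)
You should first note that the paper itself offers no proof of this statement: it is a remark credited to an anonymous referee, with the verification delegated wholesale to \cite{BoLaSt-I}. Your proposal therefore does not parallel an in-paper argument but supplies the missing one, and it is essentially correct. Your description of $\overline{{\rm Cat}}$ (1-cells $(F,e)$ with $e$ idempotent, 2-cells $\phi$ with $e'\cdot\phi=\phi=\phi\cdot e$, identity 2-cell of $(F,e)$ equal to $e$, composition $(G,e')(F,e)=(GF,\,e'F\cdot Ge)$) is the standard local Karoubi envelope; the identities $\tr L\cdot\eta=R\tl\cdot\eta=\eta$ and $\ve\cdot\tl R=\ve\cdot L\tr=\ve$ from \ref{reg-p} are exactly the 2-cell conditions on $\eta$ and $\ve$; and your computation of the whiskered coproduct---that $\tl*\eta*\tr$, after sliding the whiskered idempotents across $L\eta R$ by naturality, is $\tl RL\tr\cdot L\eta R$---recovers precisely the $\delta$ of \ref{symm-com} with counit $\ve$, which is the substantive content of the remark; the concluding reduction to the split comonad $(\ul L\ul R,\ul L\ul\eta\ul R,\ul\ve)$ via \ref{isplit-b} also matches the paper's intended analogy with Proposition \ref{Frob-pair}. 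Two points deserve more care than you give them. First, in $\overline{{\rm Cat}}$ the triangle composite is not the bare $\ve L\cdot L\eta$ but $\tl\cdot(\ve L\cdot L\eta)\cdot\tl$, since whiskering with $(L,\tl)$ inserts the identity 2-cell $\tl$; this collapses to $\tl$ only because regularity makes $\tl$ idempotent, and that is exactly where regularity, as opposed to a mere pairing, enters---worth stating explicitly. Second, in the converse direction the phrase ``regularity transcribes the triangle identities'' conceals a genuine computation: unwinding $\alpha\beta\alpha=\alpha$ and $\beta\alpha\beta=\beta$ for $\alpha(f)=R(f)\cdot\eta$, $\beta(g)=\ve\cdot L(g)$ shows they amount to the \emph{one-sided} identities $\tr L\cdot\eta=\eta$ and $\ve\cdot L\tr=\ve$, which must be deduced from the two-sided 2-cell conditions $\tr\tl\cdot\eta=\eta$, $\ve\cdot\tl\tr=\ve$ together with the Karoubi triangle identities; this is routine (and is carried out in \cite{BoLaSt-I}) but not tautological. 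Compared with the paper, which buys brevity by citation, your direct unpacking has the advantage of making the remark self-contained and of exhibiting concretely that the comonad induced by the $\overline{{\rm Cat}}$-adjunction on the 1-cell $(LR,\tl\tr)$ is exactly the weak comonad $(LR,\delta,\ve)$ already constructed in \ref{symm-com}.
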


\section{(Co)firm (co)modules}\label{cofirm}

To develop further constructions for pairings of functors,  
 symmetry conditions are needed and so we consider weak (co)monads.   

The notion of (co-)equalisers in categories may be modified in the following way.

\begin{thm}\label{def-equ}{\bf Definitions.} \em 
Let $\K$ be a class of morphisms in a category $\A$ closed under composition. A cofork
$$ \xymatrix{ 
B\ar[r]^{k} & C \ar@<0.4ex>[r]^{g}\ar@<-0.4ex>[r]_{f}
& D }$$
is said to be a   {\em $\K$-equaliser}  provided $k\in \K$ %, $f$ and $g$ 
 and, for any 
$h:Q\to C$ in $\K$ with $ f\cdot h =g\cdot h$,  
there exists a unique $q:Q\to B$ in $\K$ such that $h=k\cdot q$.  
If this holds, then, for morphisms $r,s:X\to B$ in $\K$, $k\cdot r=k\cdot s$ implies $r=s$.

Similarly,  a fork
$$ \xymatrix{ 
 B \ar@<0.4ex>[r]^{g}\ar@<-0.4ex>[r]_{f} & C \ar[r]^{s} & D}$$
is said to be a {\em $\K$-coequaliser} provided $s\in \K$ %, $f$ and $g$ 
 and, for any 
$h: C\to Q$ in $\K$ with $ h\cdot f =h\cdot g$,  
there is a unique $q:D\to Q$ in $\K$ such that $h=q\cdot s$.  
In this case, for morphisms $t,u: D\to Y$ in $\K$,  $t\cdot s= u\cdot s$ implies $t=u$.

A class $\K$ of morphisms in $\A$ is called an {\em ideal class} if for any
morphisms   $A\xra{f} B \xra{g} C$ in $\A$,  $f$ or $g$  in  
$\K$ implies that $g\cdot f$ is in $\K$.

Taking for $\K$ the class of all morphisms in $\A$, the  notions defined above yield 
the usual equalisers and coequalisers in the category $\A$.  
\end{thm}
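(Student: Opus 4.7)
The two cancellation claims embedded in the definition---namely that a $\K$-equaliser $k$ is left-cancellable and a $\K$-coequaliser $s$ is right-cancellable, always on test morphisms lying in $\K$---both come out of the universal property by feeding in the obvious candidate, provided we can certify that this candidate belongs to $\K$. That last step is the only thing one has to worry about, and it is precisely why the ambient class $\K$ is required in \ref{def-equ} to be closed under composition.

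For the equaliser claim, consider the given $r,s\colon X\to B$ in $\K$ with $k\cdot r=k\cdot s$, and set $h:=k\cdot r=k\cdot s\colon X\to C$. Since $k\in\K$ and $r\in\K$ (equivalently $s\in\K$), closure of $\K$ under composition gives $h\in\K$. A direct calculation shows
$$f\cdot h \;=\; f\cdot k\cdot r \;=\; g\cdot k\cdot r \;=\; g\cdot h,$$
using only that $f\cdot k=g\cdot k$ holds because the cofork defining the $\K$-equaliser commutes. The universal property of $k$ now supplies a \emph{unique} $q\colon X\to B$ in $\K$ with $h=k\cdot q$. But by construction both $r$ and $s$ lie in $\K$ and satisfy $k\cdot r=h=k\cdot s$, so the uniqueness clause forces $r=q=s$.

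The dual claim for the $\K$-coequaliser is obtained by reversing arrows. Given $t,u\colon D\to Y$ in $\K$ with $t\cdot s=u\cdot s$, put $h:=t\cdot s=u\cdot s\colon C\to Y$. Again $h\in\K$ because $\K$ is closed under composition. Commutativity of the fork yields $h\cdot f=t\cdot s\cdot f=t\cdot s\cdot g=h\cdot g$, so the universal property produces a unique $q\colon D\to Y$ in $\K$ with $h=q\cdot s$, and both $t$ and $u$ qualify as such a $q$; hence $t=u$.

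The only conceivable obstacle is the membership $h\in\K$ in each argument. Without it, the universal property would not be applicable, and no cancellation conclusion could be drawn; everything else is a one-line diagram chase. In the setting of \ref{def-equ} this obstacle is dissolved by the standing hypothesis that $\K$ is closed under composition, so no further structure (for example, the ideal-class condition introduced at the end of the definition) is needed for these particular assertions.
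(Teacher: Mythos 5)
Your proof is correct and is exactly the (one-line) argument the paper has in mind when it asserts these cancellation properties inside the definition: form $h:=k\cdot r=k\cdot s$ (resp.\ $h:=t\cdot s=u\cdot s$), note $h\in\K$ by closure under composition, check it equalises (resp.\ coequalises) $f$ and $g$ using the cofork/fork identity, and invoke the uniqueness clause. You also correctly identify that closure under composition is the only hypothesis needed here, not the ideal-class condition.
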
  

\begin{thm}\label{comp-mod}{\bf $\K$-cofirm comodules.} \em
 Let $(G,\delta)$ be a non-counital comonad. 
% with quasi-unit $\ve$.  
Given an ideal class $\K$ of morphisms in
the category   $\uB^{LR}$  of non-counital $G$-comodules, 
a comodule $(B,\omega)$ is called  {\em $\K$-cofirm}
provided the defining cofork 
$$\xymatrix{
 B\ar[r]^{\omega\quad} & 
 G(B)\ar@<0.4ex>[rr]^{\delta_B\quad}\ar@<-0.4ex>[rr]_{\, G(\omega)\quad}&& 
 GG(B)}$$
is a $\K$-equaliser. If we choose for $\K$ all morphisms in $\uB^{LR}$, 
a $\K$-cofirm comodule is just called {\em cofirm}.
\end{thm}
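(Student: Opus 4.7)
The statement is a definition, so the task is to verify the consistency data implicit in it: that the displayed cofork lives in $\uB^G$, that all three of its arrows are $G$-comodule morphisms, and that the $\K$-equaliser condition from \ref{def-equ} applies to it without ambiguity. Accordingly the plan is not to prove a theorem but to record the checks that make the definition internally coherent, and then to observe the stated special case.

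First, I would check that the two parallel arrows are morphisms of $G$-comodules between $(G(B),\delta_B)$ and $(GG(B),\delta_{G(B)})$. For $\delta_B$ this is the coassociativity of $\delta$, namely $\delta_{G(B)}\cdot \delta_B = G(\delta_B)\cdot \delta_B$. For $G(\omega)$ it follows from the naturality of $\delta$ applied to $\omega:B\to G(B)$, which yields $\delta_{G(B)}\cdot G(\omega) = GG(\omega)\cdot \delta_B$. Next I would verify that $\omega$ itself is a morphism $(B,\omega)\to (G(B),\delta_B)$ in $\uB^G$; this amounts to $\delta_B\cdot \omega = G(\omega)\cdot \omega$, which is precisely the coassociativity axiom for $(B,\omega)$ as a non-counital $G$-comodule. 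That same identity tells us that the diagram is indeed a cofork, so the clauses of \ref{def-equ} for a $\K$-equaliser apply verbatim.

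The subtlest point, and the one I would flag explicitly, is the interplay with the ideal-class hypothesis on $\K$. The $\K$-equaliser condition demands $\omega\in \K$ as part of the data, and in the universal factorisation any $h\in \K$ with $\delta_B\cdot h = G(\omega)\cdot h$ must factor through $\omega$ via a morphism $q\in \K$; ideality of $\K$ is precisely what guarantees that the compositions appearing in such factorisations remain within $\K$, so that the universal $q$ can legitimately be sought inside $\K$. Finally, taking $\K$ to be the class of all morphisms of $\uB^G$, which is trivially an ideal class, collapses the $\K$-equaliser to an ordinary equaliser in $\uB^G$, recovering the standard notion of cofirm comodule as in \cite{BoGo} and justifying the unadorned terminology in the last sentence of the statement.
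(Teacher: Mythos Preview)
The statement is purely a definition, and the paper offers no proof for it; your proposal correctly recognises this and supplies the routine well-formedness checks (that $\omega$, $\delta_B$, and $G(\omega)$ are $G$-comodule morphisms and that the displayed diagram is a cofork in $\uB^G$) which the paper leaves implicit. One small point: the ideal-class hypothesis on $\K$ is not actually needed for the $\K$-equaliser notion of \ref{def-equ} to apply here---closure under composition already suffices---so your remark that ``ideality of $\K$ is precisely what guarantees'' the coherence of the definition slightly overstates matters; the ideal-class assumption is imposed in \ref{comp-mod} with a view to later uses (e.g.\ Proposition~\ref{equal}), not for internal consistency of the definition itself.
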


\begin{thm}\label{comp-morph}{\bf Compatible comodule morphisms.} \em
Now let $(G,\delta,\ve)$ be a weak comonad and $\gamma:= G\ve \cdot\delta:G\to G$ 
the idempotent comonad morphism.
 We call a morphism $h$ between $G$-comodules $(B,\omega)$ and $(B',\omega')$ 
{\em $\gamma$-compatible}, provided it induces commutativity of the triangles in the diagram
$$\xymatrix{ B \ar[d]_h \ar[r]^{\omega\quad} \ar[drr]^h & G (B) \ar[r]^{\quad\ve_B} & B \ar[d]^h\\
 B' \ar[r]_{\omega'\quad} &  G (B') \ar[r]_{\quad\ve_{B'} } & B' .} $$
Clearly, since the outer diagram is always commutative for comodule morphisms, 
it is enough to require commutativity for one of the triangles.
Thus one readily obtains:
{\em
\begin{zlist}
\item The class $\K_{\gamma}$  of all $\gamma$-compatible morphisms in 
    $\ol\B^{G}$ is  an ideal class.
\item A morphism $h:Q\to G (B)$ of $G $-comodules is in $\K_{\gamma}$
  if and only if \; $\gamma_B \cdot h = h$. 
\item A morphism $h:G(B)\to Q$ of $G$-comodules is in $\K_{\gamma}$  if and only if 
 $h\cdot {\gamma}_B= h$.
\end{zlist}}
\smallskip 
     
Evidently, a  $G$-comodule $(B,\omega)$ is compatible (as in \ref{q-comod})
if and only if \; $\omega \in \K_{\gamma} $, that is, $\omega={\gamma}_B\cdot \omega$.

Notice that ${\gamma}=I_{G}$ implies that every non-counital $G $-comodule 
is ${\gamma}$-compatible, that is, $\uB^{G}=\ol\B^{G}$; in this case, however, 
not every $G $-comodule morphism  
need to be ${\gamma}$-compatible and a $G$-comodule $(B,\omega)$ need not be counital 
but only satisfies $\omega=\omega\cdot\ve_B\cdot \omega$.
\end{thm}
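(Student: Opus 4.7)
The plan is to unpack the definition of $\gamma$-compatibility and then derive each of (1)--(3) by direct computation, using the weak-comonad axiom $\ve G\cdot \delta = G\ve \cdot\delta$ to identify $\ve_{G(B)}\cdot \delta_B$ with $\gamma_B$. First I would record the reformulation of $\gamma$-compatibility: for a comodule morphism $h:(B,\omega)\to (B',\omega')$, the right-hand triangle of the diagram reads $h\cdot\ve_B\cdot\omega = h$ and the left-hand triangle reads $\ve_{B'}\cdot\omega'\cdot h = h$. The outer rectangle is automatic, since $h$ being a comodule morphism gives $G(h)\cdot\omega = \omega'\cdot h$, so that by naturality of $\ve$, $\ve_{B'}\cdot\omega'\cdot h = \ve_{B'}\cdot G(h)\cdot\omega = h\cdot\ve_B\cdot\omega$; hence requiring either triangle suffices, as the paper already notes.

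Next I would obtain (2) and (3) by specialisation. For (2), I take $(B',\omega') = (G(B),\delta_B)$; the left-triangle condition becomes $\ve_{G(B)}\cdot\delta_B\cdot h = h$, and the weak-comonad axiom yields $\ve_{G(B)}\cdot\delta_B = (G\ve)_B\cdot\delta_B = \gamma_B$, so compatibility is exactly $\gamma_B\cdot h = h$. For (3), I take the source to be $(G(B),\delta_B)$; the right-triangle condition $h\cdot\ve_{G(B)}\cdot\delta_B = h$ becomes $h\cdot\gamma_B = h$ by the same identification.

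For (1), I would check closure of $\K_\gamma$ under composition of comodule morphisms $f:(X,\omega_X)\to(Y,\omega_Y)$ and $g:(Y,\omega_Y)\to(Z,\omega_Z)$. If $g\in\K_\gamma$, then $g = \ve_Z\cdot\omega_Z\cdot g$, and post-composing with $f$ yields $g\cdot f = \ve_Z\cdot\omega_Z\cdot(g\cdot f)$, so $g\cdot f\in\K_\gamma$. If instead $f\in\K_\gamma$, I use that $g$ is a comodule morphism together with naturality of $\ve$ to compute
$$\ve_Z\cdot\omega_Z\cdot g\cdot f \;=\; \ve_Z\cdot G(g)\cdot\omega_Y\cdot f \;=\; g\cdot\ve_Y\cdot\omega_Y\cdot f \;=\; g\cdot f,$$
and again $g\cdot f\in\K_\gamma$. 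There is no serious obstacle; the only two points requiring attention are invoking the weak-comonad axiom $\ve G\cdot\delta = G\ve\cdot\delta$ to rewrite the compatibility condition in (2)--(3) in terms of $\gamma$, and combining the comodule-morphism property of $g$ with naturality of $\ve$ in the $f\in\K_\gamma$ case of (1).
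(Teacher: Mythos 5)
Your proposal is correct and fills in exactly the direct verification the paper leaves implicit (the paper only says ``one readily obtains''): the equivalence of the two triangles via naturality of $\ve$ and the comodule-morphism identity, the identification $\ve_{G(B)}\cdot\delta_B=\gamma_B$ from the weak-comonad axiom $G\ve\cdot\delta=\ve G\cdot\delta$ for (2) and (3), and the two-sided absorption check for the ideal-class property in (1). No gaps; this is the intended argument.
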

 
\begin{thm}\label{d-equal}{\bf Proposition.}
If  $(G,\delta,\ve)$ is a weak comonad,
then any compatible $G$-comodule $(B,\omega)$ is  $\K_{\gamma}$-cofirm.
\end{thm}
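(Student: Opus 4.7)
The plan is to verify the three requirements of a $\K_\gamma$-equaliser directly: that $\omega$ itself lies in $\K_\gamma$, that any test morphism in $\K_\gamma$ factors through $\omega$ by a morphism in $\K_\gamma$, and that this factorisation is unique. The candidate factorising map should be the natural one induced by the quasi-counit, namely $q := \ve_B \cdot h$ for a given $h : Q \to G(B)$. The whole calculation will rest on three ingredients, all stated earlier: the compatibility condition $\omega = \gamma_B \cdot \omega$, the characterisation of $\K_\gamma$ from \ref{comp-morph}(2),(3), and the symmetry axiom $G\ve \cdot \delta = \ve G \cdot \delta = \gamma$ built into the notion of weak comonad.

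First I would observe that $\omega \in \K_\gamma$ is immediate from the definition of compatibility together with \ref{comp-morph}(2). Next, suppose $h : Q \to G(B)$ is a $G$-comodule morphism in $\K_\gamma$ satisfying $\delta_B \cdot h = G(\omega) \cdot h$, and put $q := \ve_B \cdot h$. To check $\omega \cdot q = h$, I would slide $\ve$ past $\omega$ by naturality to rewrite $\omega \cdot \ve_B \cdot h = \ve_{G(B)} \cdot G(\omega) \cdot h$, use the fork equation to replace $G(\omega)\cdot h$ by $\delta_B \cdot h$, and then invoke the weak-comonad identity $\ve G \cdot \delta = \gamma$ followed by $h \in \K_\gamma$ to collapse the expression back to $h$. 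To see that $q$ is a morphism of $G$-comodules from $(Q,\omega_Q)$ to $(B,\omega)$, I would compute $G(q) \cdot \omega_Q = G(\ve_B) \cdot G(h) \cdot \omega_Q = \gamma_B \cdot h = h = \omega \cdot q$ using that $h$ itself is a comodule morphism. That $q \in \K_\gamma$ then follows from \ref{comp-morph}(2) applied to $q : Q \to B$, since $\ve_B \cdot \omega \cdot q = \ve_B \cdot h = q$ by the factorisation already obtained.

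For uniqueness, any other $q' : Q \to B$ in $\K_\gamma$ with $\omega \cdot q' = h$ must satisfy $q' = \ve_B \cdot \omega \cdot q' = \ve_B \cdot h = q$, where the first equality uses membership of $q'$ in $\K_\gamma$ via the same characterisation. The one subtle point I expect to need the most care with is the bookkeeping between the two symmetric descriptions of $\K_\gamma$-membership in \ref{comp-morph}(2) and (3), and the precise invocation of the weak comonad axiom $G\ve\cdot\delta = \ve G\cdot \delta$, which is exactly what makes the quasi-counit $\ve_B$ serve as a \emph{two-sided} inverse of $\omega$ up to the idempotent $\gamma_B$ and thus supplies the factorising map on the nose.
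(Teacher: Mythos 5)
Your proof is correct and follows essentially the same route as the paper's: the factorising map is $q=\ve_B\cdot h$, verified via naturality of $\ve$, the fork equation $G(\omega)\cdot h=\delta_B\cdot h$, the weak-comonad identity $G\ve\cdot\delta=\ve G\cdot\delta=\gamma$, and the $\K_{\gamma}$-membership of $h$, with uniqueness obtained exactly as in the paper. The only cosmetic slip is that membership of $q:Q\to B$ in $\K_{\gamma}$ should be checked against the defining triangle $\ve_B\cdot\omega\cdot q=q$ rather than by citing \ref{comp-morph}(2), which concerns morphisms into the cofree comodule $G(B)$; your computation establishes precisely the right condition, so nothing is missing.
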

\begin{proof} 
  We have to show that the cofork
$$\xymatrix{
 B\ar[r]^{\omega\quad} & 
 G (B)\ar@<0.4ex>[rr]^{\delta_B \quad}\ar@<-0.4ex>[rr]_{\, G (\omega)\quad}&& G G (B)}$$
is a  $\K_{\gamma}$-equaliser.
Let $(Q,\kappa)$ be a  $G $-comodule and  
  $h: Q \to G (B)$ a morphism in $\K_{\gamma}$ 
with $G (\omega) \cdot h = \delta_B\cdot h$.  
In the diagram
\begin{equation}\label{dia-equal}
\xymatrix{
 Q\ar[rr]^{h\quad}\ar[dr]^{h\quad}\ar[dd]_{\kappa} & 
 & G (B) \ar[d]^{G (\omega)} \ar[r]^{\quad\ve_B}
     & B \ar[dd]^\omega \\
 & G (B) \ar[r]^{\delta_B} \ar[d]^{\delta_B} & G G (B) \ar[dr]^{\ve_{ G(B)}}   \\
G (Q) \ar[r]_{G (h)\quad}  &
 G G (B) \ar[rr]_{\;G \,\ve_B} & & G (B) ,
}
\end{equation} 
 all inner diagrams are commutative.
This shows that $\widetilde h := \ve_B\cdot h:Q \to B$ is a $G$-comodule morphism with 
$$\begin{array}{rcl}
 \omega \cdot\widetilde h  & =& \omega\cdot\ve_B\cdot h 
 \;= \; \ve_{G (B)} \cdot G (\omega) \cdot h \\
& = & \ve_{G (B)}  \cdot \delta_B\cdot h  
\;=\; \gamma_B\cdot h\; =\; h,\\[+1mm]
\ve_B\cdot \omega \cdot\widetilde h &=& \ve_B \cdot \gamma_B \cdot h  
\; = \; \ve_B \cdot h \; =\; \widetilde h,
\end{array}$$ 
thus $\widetilde h \in \K_{\gamma}$.
Moreover, for any 
$q: Q\to B$ in $\K_\gamma$ with $\omega \cdot q= h$,  we have
  $\ve_B \cdot h = \ve_B \cdot \omega \cdot q = q$,
showing uniqueness of $q$.
\end{proof}

Replacing $(Q,h)$ in diagram  (\ref{dia-equal}) by $(B,\omega)$, we see that 
$\ve_B\cdot \omega$ is a comodule morphism and this
leads to the following observation.

\begin{thm}\label{equ-counit}{\bf Proposition.} 
 If  $(G,\delta,\ve)$ is a (proper) comonad,
then any  non-counital  
 $G$-comodule $(B,\omega)$
 is cofirm if and only if it is counital. 
\end{thm}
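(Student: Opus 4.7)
The plan is to leverage the defining property of a proper comonad, namely $G\ve\cdot\delta = I_G$ (so $\gamma = I_G$ in the notation of \ref{comp-morph}), together with the observation recorded immediately before the proposition that $\ve_B\cdot \omega:B\to B$ is always a comodule endomorphism of $(B,\omega)$. Both implications will then reduce to a single uniqueness argument for factorizations through $\omega$.

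For the direction ``counital implies cofirm'', assume $\ve_B\cdot \omega = I_B$. Given a $G$-comodule morphism $h:(Q,\kappa)\to (G(B),\delta_B)$ with $\delta_B\cdot h = G(\omega)\cdot h$, I set $\widetilde h := \ve_B\cdot h$ and verify, as in the proof of Proposition~\ref{d-equal} but using the stronger counit axioms $\ve_{G(B)}\cdot \delta_B = I_{G(B)} = G\ve_B\cdot \delta_B$, that
\[
\omega\cdot\widetilde h \;=\; \ve_{G(B)}\cdot G(\omega)\cdot h \;=\; \ve_{G(B)}\cdot \delta_B\cdot h \;=\; h
\]
and $G(\widetilde h)\cdot \kappa = G\ve_B\cdot \delta_B\cdot h = h$, so that $\widetilde h$ is a comodule morphism factoring $h$ through $\omega$. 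Any other factorization $\omega\cdot q = h$ with $q$ a comodule morphism satisfies $q = \ve_B\cdot \omega\cdot q = \ve_B\cdot h = \widetilde h$ by counitality, which gives uniqueness without restriction to any subclass of morphisms.

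For the direction ``cofirm implies counital'', assume the defining cofork is an equaliser with respect to all comodule morphisms. The canonical map $\omega:(B,\omega)\to (G(B),\delta_B)$ is itself a comodule morphism and trivially satisfies $\delta_B\cdot \omega = G(\omega)\cdot \omega$. It is factored through itself by $q = I_B$, and also by $q = \ve_B\cdot\omega$: the latter is a comodule morphism by the remark preceding the proposition, and
\[
\omega\cdot(\ve_B\cdot \omega) \;=\; \ve_{G(B)}\cdot G(\omega)\cdot \omega \;=\; \ve_{G(B)}\cdot \delta_B\cdot \omega \;=\; \omega.
\]
The uniqueness part of the equaliser property then forces $\ve_B\cdot \omega = I_B$, i.e.\ $(B,\omega)$ is counital.

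I expect no real obstacle; each verification is a one-line application of naturality of $\ve$ and the counit axioms. The only point worth flagging is that the equaliser property is tested against \emph{all} comodule morphisms, rather than the smaller class $\K_\gamma$ from \ref{comp-morph}, and this extra strength is precisely what supplies the uniqueness needed for the converse direction.
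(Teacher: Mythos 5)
Your proof is correct and follows essentially the same route as the paper: both directions hinge on the remark that $\ve_B\cdot\omega$ is a comodule endomorphism with $\omega\cdot\ve_B\cdot\omega=\omega$, and your uniqueness-of-factorization argument for ``cofirm $\Rightarrow$ counital'' is just the paper's use of $\omega$ being a monomorphism among comodule morphisms, phrased via the equaliser's uniqueness clause. The only difference is that you write out the converse direction explicitly, which the paper dismisses as folklore.
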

\begin{proof}  
Since we have a comonad, $\gamma =I_{G}$, every $G$-comodule $(B,\omega)$
 is $\gamma$-compatible, and $\omega=\omega\cdot \ve_B\cdot\omega$
(see \ref{comp-morph}). 

If $(B,\omega)$ is cofirm, then $\omega$ is monomorph in $\uB^{LR}$;  since 
$\ve_B\cdot\omega$ and $I_{B}$ are morphisms in $\uB^{G}$ we conclude
$\ve_B\cdot\omega=I_{B}$, that is, $(B,\omega)$ is counital.

 It is folklore that any counital $G$-comodule is cofirm. 
\end{proof}
 
\begin{thm}\label{monads}{\bf $\K$-firm modules.} \em
Let $(F,m)$ be a non-unital monad  on $\B$.
Given an ideal class of morphisms in the category $\uB_{F}$ 
of non-unital $F$-modules (see \cite{Wi-reg}),
a module $(B,\varrho)$ is called  {\em $\K$-firm} provided the defining fork 
$$ \xymatrix{ 
 FF(B) \ar@<0.4ex>[rr]^{\quad m_B}\ar@<-0.4ex>[rr]_{\quad F(\varrho)}
& & F(B) \ar[r]^{\quad\varrho} & B }$$
is a $\K$-coequaliser (Definitions \ref{def-equ}). 
\end{thm}

\begin{thm}\label{rem-monads}{\bf Remark.} \em
Following \cite[2.3]{BoGo}, a non-unital  $F$-module $(B,\varrho)$ is called {\em firm}
provided it is $\K$-firm for the class $\K$ of all morphisms in $\uB_{F}$
 and $\varrho$ is an epimorphism in $\B$. The term {\em firm} was coined by 
Quillen for non-unital algebras $A$ over a commutative ring $k$ with the property
that the map $A\ot_A A \to A$, $a\ot b\mapsto ab$, is an isomorphism. 
Then, an $A$-module is firm provided it is firm for the monad $A\ot_k-$ 
on the category of $k$-modules. In the category of non-unital 
$A$-modules, coequalisers are induced by coequalisers of $k$-modules and hence are   
epimorph (in fact surjective) as $k$-module morphisms (e.g. \cite[6.1]{BoGo}). 
\end{thm}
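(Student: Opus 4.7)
The content of the remark to be justified is that, for a non-unital $k$-algebra $A$, coequalisers in the category of non-unital $A$-modules are computed in $k$-Mod and are consequently surjective. My plan is to take parallel morphisms $f,g: (M,\rho_M) \rightrightarrows (N,\rho_N)$ of non-unital $A$-modules, form the $k$-linear coequaliser $q: N \to Q$, transport the $A$-action across $q$ by a universal-property argument, and then verify that $(Q, q)$ is the coequaliser in the category of non-unital $A$-modules. Surjectivity of $q$ is automatic from the construction in $k$-Mod, so the ``epimorph (in fact surjective)'' assertion will follow at once from the first step.

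The central tool is right exactness of $A\otimes_k-$, which yields that $A\otimes q$ and $A\otimes A\otimes q$ are coequalisers in $k$-Mod and in particular epic. Since $f,g$ are $A$-linear,
\[
q\circ\rho_N\circ(A\otimes f) \;=\; q\circ f\circ\rho_M \;=\; q\circ g\circ\rho_M \;=\; q\circ\rho_N\circ(A\otimes g),
\]
so the universal property of $A\otimes q$ produces a unique $k$-linear $\rho_Q\colon A\otimes_k Q \to Q$ with $\rho_Q\circ(A\otimes q) = q\circ\rho_N$; in particular $q$ becomes a morphism of non-unital $A$-modules. The associativity identity $\rho_Q\circ(m_A\otimes Q) = \rho_Q\circ(A\otimes\rho_Q)$ is then checked by precomposing both sides with the epimorphism $A\otimes A\otimes q$, substituting the defining relation of $\rho_Q$ twice, and invoking associativity of $\rho_N$; cancelling the epimorphism closes the calculation.

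For the universal property, given any $A$-linear $h\colon (N,\rho_N)\to(P,\rho_P)$ with $h\circ f = h\circ g$, the unique $k$-linear factorisation $\bar h\colon Q\to P$ through $q$ exists. To verify $\bar h$ is $A$-linear, compute
\[
\bar h\circ\rho_Q\circ(A\otimes q) \;=\; \bar h\circ q\circ\rho_N \;=\; h\circ\rho_N \;=\; \rho_P\circ(A\otimes h) \;=\; \rho_P\circ(A\otimes\bar h)\circ(A\otimes q),
\]
and cancel the epic $A\otimes q$. Hence $(Q,\rho_Q,q)$ is the coequaliser of $f,g$ in non-unital $A$-modules, its underlying $k$-linear map agrees with the $k$-module coequaliser of $f,g$, and is therefore surjective. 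The only substantive ingredient is right exactness of $-\otimes_k-$; the principal thing to be vigilant about is that no unit on $A$ is invoked anywhere in the argument, which is precisely what allows the conclusion to hold in the non-unital setting of the remark.
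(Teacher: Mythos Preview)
Your argument is correct: forming the $k$-linear coequaliser, transporting the action via the epimorphism $A\otimes_k q$ (available by right exactness of $A\otimes_k-$), and checking associativity and the universal property by cancelling that epimorphism is exactly the standard way to see that coequalisers in non-unital $A$-modules are created by the forgetful functor to $k$-Mod, hence surjective.

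As for comparison with the paper: there is nothing to compare against. This item is a \emph{Remark}, not a result the paper proves. The paper merely records the terminology, gives the historical attribution to Quillen, and for the assertion about coequalisers simply refers the reader to \cite[6.1]{BoGo}. You have supplied an explicit self-contained justification where the paper only cites the literature, so your write-up goes beyond what the paper does rather than paralleling or diverging from any argument in it.
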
 

\begin{thm}\label{comp-morph-mod}{\bf Compatible module morphisms.} \em
Let  $(F,m,\eta)$ be a weak monad with idempotent monad morphism 
$\vartheta:=m\cdot \eta F:F\to F$. 
A morphism $h$ between $F$-modules $(B,\varrho)$ and $(B',\varrho')$ is called 
{\em $\vartheta$-compatible}, provided it induces commutativity of the triangles in the
diagram
$$\xymatrix{ 
B \ar[d]_h \ar[r]^{\eta_B\quad} \ar[drr]^h & F(B) \ar[r]^{\quad\varrho} & B \ar[d]^h\\
B' \ar[r]_{\eta_{B'}\quad} &  F(B') \ar[r]_{\quad\varrho'} & B' .} $$
 
Similar to \ref{comp-morph} one obtains:
{\em
\begin{zlist}
\item 
The class $\K_\vartheta$  of all $\vartheta$-compatible morphisms in $\rB_{F}$ is an 
ideal class.
\item A morphism $h:Q\to F(B)$ of $F$-modules is in $\K_\vartheta$
  if and only if   $\vartheta_B \cdot h = h$. 
\item A morphism $h:LR(B)\to Q$ of $F$-modules is in $\K_\vartheta$  if and only if  
  $h\cdot \vartheta_B = h$.
\end{zlist}}
\smallskip

Clearly, an $F$-module $(B,\varrho)$ is compatible (see \ref{q-mod})
if and only if  $\varrho \in \K_\vartheta$, that is, 
$\varrho\cdot \vartheta_B = \varrho$.
\end{thm}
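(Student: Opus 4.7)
My plan is to dualise Proposition \ref{comp-morph} step for step, replacing the comonad identities with monadic ones and the idempotent $\gamma = G\ve\cdot \delta$ with $\vartheta = m\cdot \eta F$. I would begin by recording the symmetry that justifies the paper's remark that only one triangle needs to be checked: for any $F$-module morphism $h:(B,\varrho)\to (B',\varrho')$, naturality of $\eta$ gives
$$\varrho'\cdot \eta_{B'}\cdot h \;=\; \varrho'\cdot F(h)\cdot \eta_B \;=\; h\cdot \varrho\cdot \eta_B,$$
so the two triangles in the diagram defining $\vartheta$-compatibility commute simultaneously.

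For (1) I would verify closure of $\K_\vartheta$ under pre- and post-composition with arbitrary $F$-module morphisms. Given $f:(A,\alpha)\to (B,\beta)$ and $g:(B,\beta)\to (C,\gamma)$, if $f\in\K_\vartheta$ then
$$(g\cdot f)\cdot \alpha\cdot \eta_A \;=\; g\cdot (f\cdot \alpha\cdot \eta_A) \;=\; g\cdot f,$$
while if $g\in\K_\vartheta$ then
$$\gamma\cdot \eta_C\cdot (g\cdot f) \;=\; (\gamma\cdot \eta_C\cdot g)\cdot f \;=\; g\cdot f;$$
in either case $g\cdot f\in \K_\vartheta$, making $\K_\vartheta$ an ideal class.

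For (2) and (3) I would simply unfold $\vartheta_B = m_B\cdot \eta_{F(B)}$ on the free $F$-module $(F(B),m_B)$. A morphism $h:(Q,\kappa)\to (F(B),m_B)$ satisfies the relevant triangle condition precisely when $m_B\cdot \eta_{F(B)}\cdot h = h$, i.e.\ $\vartheta_B\cdot h = h$; dually, $h:(F(B),m_B)\to (Q,\kappa)$ is $\vartheta$-compatible precisely when $h\cdot m_B\cdot \eta_{F(B)} = h$, i.e.\ $h\cdot \vartheta_B = h$. No serious obstacle is expected: the whole statement is a routine transcription of \ref{comp-morph} into the monadic setting, with the work carried entirely by naturality of $\eta$ and the explicit definition of the idempotent $\vartheta$.
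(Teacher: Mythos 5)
Your proposal is correct and follows exactly the route the paper intends: the paper gives no written proof beyond ``similar to \ref{comp-morph} one obtains'', and your dualisation (naturality of $\eta$ plus the module morphism condition to identify the two triangles, then the ideal-class closure and the unfolding of $\vartheta_B=m_B\cdot\eta_{F(B)}$ on the free module) supplies precisely the routine details that are being left to the reader. The only point worth adding is that the final ``clearly'' sentence also uses the weak-monad identity $m\cdot F\eta=m\cdot\eta F$ to match $\varrho\cdot\vartheta_B=\varrho$ with the compatibility condition $\varrho=\varrho\cdot m_B\cdot F\eta_B$ of \ref{q-mod}.
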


\begin{thm}\label{rem-comp}{\bf Remark.} \em
Given the assumptions in \ref{comp-morph-mod}, one may consider the subcategory of 
$\rB_{F}$ consisting of the same objects and as morphisms the $\vartheta$-compatible 
morphisms.
Then the identity morphism on a $\vartheta$-compatible module $(B,\varrho)$
is $\varrho\cdot\eta_B: B\to B$ and  equalisers in this category are essentially the $\K_\vartheta$-equalisers. 
This situation is also addressed in \cite[Remark 2.5]{BoLaSt-I}  (with different terminology). 
\end{thm}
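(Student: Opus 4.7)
The plan is to verify, in turn, that $e_B := \varrho\cdot\eta_B$ deserves to be called the identity on $(B,\varrho)$ in this subcategory, and then to show that equalisers in the subcategory coincide with $\K_\vartheta$-equalisers in $\rB_F$. Concretely, three things need to be checked for the first claim: (i) $e_B$ is an $F$-module endomorphism of $(B,\varrho)$; (ii) $e_B$ is $\vartheta$-compatible, i.e.\ lies in $\K_\vartheta$; (iii) $e_B$ acts as identity from the left and from the right on every $\vartheta$-compatible morphism adjacent to $(B,\varrho)$.

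For (i) I would compute, using associativity $\varrho\cdot F\varrho = \varrho\cdot m_B$ together with the two compatibility identities from \ref{q-mod}, namely $\varrho = \varrho\cdot m_B\cdot F\eta_B$ and $\varrho = \varrho\cdot\eta_B\cdot\varrho$,
\[
\varrho\cdot F(e_B) \;=\; \varrho\cdot F\varrho\cdot F\eta_B \;=\; \varrho\cdot m_B\cdot F\eta_B \;=\; \varrho \;=\; \varrho\cdot\eta_B\cdot\varrho \;=\; e_B\cdot\varrho,
\]
so $e_B$ is indeed an $F$-module morphism. For (ii) idempotence $e_B\cdot e_B = \varrho\cdot\eta_B\cdot\varrho\cdot\eta_B = \varrho\cdot\eta_B = e_B$ is immediate from the relation $\varrho = \varrho\cdot\eta_B\cdot\varrho$, and by part (2)--(3) of \ref{comp-morph-mod} this places $e_B$ in $\K_\vartheta$.

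Point (iii) is essentially built into the definition of $\vartheta$-compatibility: unpacking the commutative triangles in the diagram of \ref{comp-morph-mod}, any $\vartheta$-compatible $h:(B,\varrho)\to(B',\varrho')$ satisfies $h\cdot e_B = h$ and $e_{B'}\cdot h = h$, so $e_B$ serves as a two-sided identity for every $\vartheta$-compatible morphism with $(B,\varrho)$ as source or target. The equaliser claim then unravels by definition chase: an equaliser in the subcategory of a parallel pair $f,g:C\rightrightarrows D$ is a $\vartheta$-compatible $k:B\to C$ through which every $\vartheta$-compatible $h:Q\to C$ with $f\cdot h = g\cdot h$ factors uniquely by a $\vartheta$-compatible morphism; this matches verbatim the $\K_\vartheta$-equaliser of \ref{def-equ}. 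The qualifier \emph{essentially} only acknowledges that uniqueness of the factoriser is now measured against the new identities $e_B$ rather than the ambient $I_B$.

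The main obstacle is the algebraic computation under (i), which crucially deploys both compatibility identities of a compatible module for a weak monad in the right order; once that is secured, the remaining content of the statement is an unwinding of definitions.
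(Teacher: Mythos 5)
Your verification is correct, and it is exactly the definition-unwinding that the paper leaves implicit (the remark is stated without proof): $e_B=\varrho\cdot\eta_B$ is a module endomorphism because both $\varrho\cdot F(e_B)$ and $e_B\cdot\varrho$ reduce to $\varrho$ via the two compatibility identities, it is idempotent, and $\vartheta$-compatibility of any $h$ says precisely $h\cdot e_B=h=e_{B'}\cdot h$, after which the equaliser claim is a transcription of Definition \ref{def-equ}. One minor point: membership of $e_B$ in $\K_\vartheta$ should be read off directly from the defining triangles of \ref{comp-morph-mod} applied to $h=e_B$ (which is exactly two-sided idempotence), rather than from parts (2)--(3) there, since those concern morphisms into or out of the free modules $F(B)$.
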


Dual to the Propositions \ref{d-equal} and \ref{equ-counit} we now have:

\begin{thm}\label{r-equal}{\bf Proposition.}
If  $(F,m,\eta)$ is a weak monad,
then any $\vartheta$-compatible $F$-module $(B,\varrho)$ is $\K_\vartheta$-firm.
\end{thm}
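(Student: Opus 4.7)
The plan is to dualise the argument for Proposition \ref{d-equal} throughout, replacing the counit $\ve$ of a weak comonad with the quasi-unit $\eta$ of the weak monad and reversing arrows. Fix a $\vartheta$-compatible $F$-module $(B,\varrho)$, so that $\varrho\cdot \vartheta_B=\varrho$, where $\vartheta=m\cdot \eta F=m\cdot F\eta$. I must show that the fork
$$\xymatrix{FF(B)\ar@<0.4ex>[rr]^{m_B}\ar@<-0.4ex>[rr]_{F\varrho}&& F(B)\ar[r]^\varrho & B}$$
is a $\K_\vartheta$-coequaliser. Compatibility of $(B,\varrho)$ already says $\varrho\in \K_\vartheta$ (by the third item of \ref{comp-morph-mod}), so it only remains to factor and to check uniqueness.

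First I would take an $F$-module $(Q,\kappa)$ together with a morphism $h:F(B)\to Q$ in $\K_\vartheta$ satisfying $h\cdot m_B=h\cdot F\varrho$, and, in formal analogy with $\widetilde h:=\ve_B\cdot h$ of \ref{d-equal}, set
$$\widetilde h := h\cdot \eta_B : B\to Q.$$
The main verification is that $\widetilde h$ is a morphism of $F$-modules satisfying $\widetilde h\cdot \varrho=h$. For the equality $\widetilde h\cdot \varrho=h$, naturality of $\eta$ gives $\eta_B\cdot \varrho=F\varrho\cdot \eta F_B$, and then the fork hypothesis together with $h\in \K_\vartheta$ yields
$$h\cdot \eta_B\cdot \varrho=h\cdot F\varrho\cdot \eta F_B=h\cdot m_B\cdot \eta F_B=h\cdot \vartheta_B=h.$$
That $\widetilde h$ is a module morphism then follows by combining the module-morphism property of $h$ with the same identity $m_B\cdot F\eta_B=\vartheta_B$: one computes $\kappa\cdot F\widetilde h=\kappa\cdot F h\cdot F\eta_B=h\cdot m_B\cdot F\eta_B=h\cdot \vartheta_B=h=\widetilde h\cdot \varrho$.

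Next I would check that $\widetilde h$ itself lies in $\K_\vartheta$; by \ref{comp-morph-mod} this means $\widetilde h=\widetilde h\cdot \varrho\cdot \eta_B$, but this is immediate from the just-proved $\widetilde h\cdot \varrho=h$ since then $\widetilde h\cdot \varrho\cdot \eta_B=h\cdot \eta_B=\widetilde h$. Finally, uniqueness: if $q:B\to Q$ is any morphism in $\K_\vartheta$ with $q\cdot \varrho=h$, then $\vartheta$-compatibility of $q$ gives $q=q\cdot \varrho\cdot \eta_B=h\cdot \eta_B=\widetilde h$, completing the proof.

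The only real subtlety is the module-morphism check in the second step, where one must use both directions of the $\vartheta$-compatibility of $h$ (through $h\cdot \vartheta_B=h$) together with the weak-monad identity $m\cdot F\eta=m\cdot \eta F$; everything else is a direct transcription of the comodule argument via the dualisation $\ve_B\leftrightsquigarrow\eta_B$, $\delta_B\leftrightsquigarrow m_B$, $G\omega\leftrightsquigarrow F\varrho$.
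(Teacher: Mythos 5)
Your proof is correct and is precisely the dualisation the paper intends: the paper states Proposition \ref{r-equal} without proof, introducing it as ``dual to Proposition \ref{d-equal}'', and your argument mirrors that proof step for step (with $\widetilde h:=h\cdot\eta_B$ in place of $\widetilde h:=\ve_B\cdot h$, and the weak-monad identity $m\cdot F\eta=m\cdot\eta F$ in place of $G\ve\cdot\delta=\ve G\cdot\delta$). All the verifications --- that $\varrho\in\K_\vartheta$, that $\widetilde h$ is a $\vartheta$-compatible module morphism factoring $h$ through $\varrho$, and uniqueness via $q=q\cdot\varrho\cdot\eta_B$ --- check out.
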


\begin{thm}\label{equ-unit}{\bf Proposition.} 
If  $(F,m,\eta)$ is a (proper) monad,
then a non-unital $F$-module $(B,\varrho)$ is firm if and only if it is unital.
\end{thm}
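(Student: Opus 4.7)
The plan is to mirror, in dualised form, the proof of Proposition \ref{equ-counit}, making essential use of the extra epimorphism requirement baked into the definition of firm in Remark \ref{rem-monads}.

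First I would reduce to the basic identity. Since $(F,m,\eta)$ is a proper monad, the idempotent $\vartheta = m\cdot\eta F$ equals $I_F$; hence the class $\K_\vartheta$ contains every morphism of $\uB_F$, every non-unital $F$-module is $\vartheta$-compatible, and by \ref{comp-morph-mod} one has $\varrho = \varrho\cdot\eta_B\cdot\varrho$ for any non-unital module $(B,\varrho)$. Under these identifications, being $\K_\vartheta$-firm is just being firm in the sense of \ref{rem-monads}.

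For the forward direction, suppose $(B,\varrho)$ is firm. By Remark \ref{rem-monads}, the coequaliser arrow $\varrho$ is an epimorphism in $\uB_F$. A short verification shows that $\varrho\cdot\eta_B:B\to B$ is an $F$-module endomorphism of $(B,\varrho)$: using $m\cdot F\eta = I_F$ and the identity above, $\varrho\cdot F(\varrho\cdot\eta_B) = \varrho\cdot m_B\cdot F\eta_B = \varrho = \varrho\cdot\eta_B\cdot\varrho$, which is the required $F$-linearity. Since $I_B$ is trivially an $F$-module morphism and $(\varrho\cdot\eta_B)\cdot\varrho = \varrho = I_B\cdot\varrho$, the epi-ness of $\varrho$ in $\uB_F$ forces $\varrho\cdot\eta_B = I_B$, i.e.\ $(B,\varrho)$ is unital.

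For the converse, given a unital $(B,\varrho)$ I would exhibit the defining fork as an absolute (split) coequaliser: $\eta_B$ splits $\varrho$ by unitality, $\eta_{F(B)}$ splits $m_B$ since $m\cdot\eta F = I_F$, and naturality of $\eta$ yields $F\varrho\cdot\eta_{F(B)} = \eta_B\cdot\varrho$. These are exactly the data for a split coequaliser, which is preserved by every functor and hence is a coequaliser in $\uB_F$; in particular $\varrho$ is epi, so the full firmness requirement of \ref{rem-monads} is met. The only genuine subtlety — and the place one must be careful — is the forward implication, where the epimorphism clause in the definition of firm (not just the coequaliser property in $\uB_F$) is needed to cancel $\varrho$ on the right; without it, the naive dual of \ref{equ-counit} would only give $\varrho\cdot\eta_B\cdot\varrho = \varrho$, which is automatic and insufficient.
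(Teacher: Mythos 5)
Your proof is correct and is precisely the dual of the paper's argument for Proposition \ref{equ-counit}, which is all the paper intends here (it states \ref{equ-unit} without proof as the dual statement): the forward direction cancels $\varrho$ on the right against the module endomorphisms $\varrho\cdot\eta_B$ and $I_B$, and the converse is the folklore split-coequaliser argument. One correction to your closing remark, though: the extra epimorphism clause of Remark \ref{rem-monads} is not actually what is needed for the forward direction, because Definition \ref{def-equ} already records that a $\K$-coequaliser arrow $s$ satisfies $t\cdot s=u\cdot s\Rightarrow t=u$ for $t,u$ in $\K$, so with $\K$ all morphisms of $\uB_F$ the coequaliser property alone makes $\varrho$ right-cancellable against module morphisms out of $B$ --- exactly as mono-ness of $\omega$ in the comodule category is extracted from the equaliser property in the proof of \ref{equ-counit}.
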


\section{Frobenius property and Frobenius bimodules}\label{Frob-Frob}

In the setting of \ref{pairings}, assume $\alpha$ and $\tbeta$ to be given, that is,
there are natural transformations  $\eta:I_\A\to RL$ and $\twve:RL\to I_\A$.
Then $(LR,L\eta R)$ is a non-counital comonad, 
and $(LR,L\twve R)$ is a non-unital monad on $\B$ 
(see \cite{Wi-reg}). 
This section is for studying the interplay between the corresponding module and
 comodule structures.

 Let $\uB^{LR}_{LR}$ denote the category of objects in $\B$ which have an 
$LR$-module as well as an $LR$-comodule structure ($LR$-bimodules)
 and with morphisms which are $LR$-module and $LR$-comodule morphisms. 

By naturality, we have the commutative diagram (Frobenius property)
\begin{equation}\label{frob-dia}
\xymatrix{ & LRLRLR  \ar[rd]^{LRL\twve R} \\
LRLR  \ar[ru]^{ L\eta RLR} \ar[rd]_{LRL\eta R}\ar[r]^{\quad L\twve R } & 
LR \ar[r]^{L\eta R \quad} & LRLR \\
     & LRLRLR \ar[ru]_{L\twve RLR} & .} 
\end{equation}
 
We are interested in $LR$-modules and $LR$-comodules subject to a reasonable 
compatibility condition.

\begin{thm}\label{frob-mod}{\bf Frobenius bimodules.} \em
A triple $(B,\varrho,\omega)$ with an object $B\in \B$ and 
two morphisms $\varrho:LR(B)\to B$ and $\omega:B\to LR(B)$
is called a {\em Frobenius bimodule} provided the data induce commutativity of the diagram  
\begin{equation*}
\xymatrix{
LRLR(B) \ar[r]^{\quad LR(\varrho)} \ar[d]_{L\twve R}\ar@{}[rd]|{{\rm (I)}} &
 LR(B) \ar[r]^{LR(\omega)\quad} \ar[d]^{\varrho} \ar@{}[rd]|{{\rm (II)}}&
  LRLR(B) \ar[d]^{L\twve R} \\
LR(B)\ar[r]^{\quad \varrho} \ar[d]_{L\eta R} \ar@{}[rd]|{{\rm(III)}} &
  B \ar[r]^{\omega \quad} \ar[d]^\omega \ar@{}[rd]|{{\rm (IV)}} & LR(B) \ar[d]^{L\eta R} \\
LRLR(B) \ar[r]_{\quad LR(\varrho)} & LR(B) \ar[r]_{LR(\omega)\quad} & LRLR(B) .}
\end{equation*}
This implies that  $\varrho:LR(B)\to B$ defines a (non-unital) $LR$-module  
and  
$\omega:B\to LR(B)$ a (non-counital) $LR$-comodule;  if that is already known,
the conditions on Frobenius bimodules reduce to commutativity of the diagrams
(II) and (III), that is commutativity of (Frobenius property for modules)
\begin{equation}\label{frob-prop}
\xymatrix{  & LRLR(B) \ar[dr]^{LR(\varrho)}   \\
LR(B) \ar[r]^{\quad\varrho} \ar[rd]_{LR(\omega)} \ar[ru]^{L\eta R} & B
 \ar[r]^{\omega\quad} & LR(B) \\
  & LRLR(B) \ar[ru]_{L\twve R} & . 
}
\end{equation}

Denote by $\B^{LR}_{LR}$ the 
category with the Frobenius $LR$-bimodules as objects and mor\-phisms which are 
$LR$-module as well as $LR$-comodule morphisms.
 
By the commutative diagram  (\ref{frob-dia}), for any $B\in \B$,
$LR(B)$ is a Frobenius bimodule with the canonical structures, that is, there is a 
%comparison
 functor 
$$K^{LR}_{LR}:\B \to \B^{LR}_{LR},\quad B \mapsto (LR(B), L\twve_{R(B)}, L\eta_{R(B)}) . $$
\end{thm}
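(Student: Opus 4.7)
The plan is to verify three things in turn: that $(LR(B), L\twve_{R(B)})$ forms a non-unital $LR$-module, that $(LR(B), L\eta_{R(B)})$ forms a non-counital $LR$-comodule, and that these structures satisfy the Frobenius compatibility diagrams (II) and (III) of the definition. After that, I would check functoriality of the assignment $B \mapsto LR(B)$ on morphisms.

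The first two items are essentially free. Since $(LR, L\twve R)$ is a non-unital monad on $\B$ (recalled at the opening of this section), the associativity $m\cdot Fm = m\cdot mF$ of its multiplication, evaluated at $B$, is verbatim the module axiom for the free module $(LR(B), L\twve_{R(B)})$. Dually, coassociativity of the non-counital comonad $(LR, L\eta R)$ provides the comodule axiom at $LR(B)$. As remarked in the definition, the Frobenius bimodule condition then reduces to commutativity of diagram (\ref{frob-prop}) with $\varrho := L\twve_{R(B)}$, $\omega := L\eta_{R(B)}$, and underlying object $LR(B)$.

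The decisive step is recognising that diagram (\ref{frob-prop}) with this data is exactly diagram (\ref{frob-dia}) whiskered with $B$. Explicitly, $\omega\cdot\varrho$ is the middle composite $L\eta R \cdot L\twve R$ at $B$, the upper composite $LR(\varrho)\cdot L\eta R_{LR(B)}$ of (\ref{frob-prop}) becomes $(LRL\twve R \cdot L\eta R LR)_B$, and the lower composite $L\twve R_{LR(B)} \cdot LR(\omega)$ becomes $(L\twve R LR \cdot LRL\eta R)_B$. The coincidence of these three paths is precisely what (\ref{frob-dia}) asserts, and that diagram commutes by the naturality of $L\twve R$ applied to components of $L\eta R$ (and vice versa). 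For functoriality, a morphism $f:B\to B'$ in $\B$ is sent to $LR(f):LR(B)\to LR(B')$, which by the same naturalities is both an $LR$-module and an $LR$-comodule morphism; identities and composites are preserved since $LR$ is itself a functor. The only real care needed — and the main, though minor, obstacle — is the bookkeeping that matches the whiskered natural transformations appearing in (\ref{frob-dia}) with the pointwise notation $L\twve_{R(B)}$, $L\eta_{R(B)}$, $L\eta R_{LR(B)}$, $L\twve R_{LR(B)}$ used in the Frobenius square.
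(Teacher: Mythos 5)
Your proposal is correct and follows the same route the paper takes: the paper justifies the claim about $K^{LR}_{LR}$ solely by pointing to diagram (\ref{frob-dia}), which commutes by naturality, and your verification that (\ref{frob-prop}) for $(LR(B),L\twve_{R(B)},L\eta_{R(B)})$ is exactly (\ref{frob-dia}) whiskered at $B$ (together with (co)associativity of the non-(co)unital (co)monads $(LR,L\twve R)$ and $(LR,L\eta R)$ for the module and comodule axioms) is precisely the intended argument, just spelled out in more detail.
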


\begin{thm}\label{sep}{\bf Natural mappings.} \em
Assume again $\eta:I_\A\to RL$ and $\twve:RL\to I_\A$ to be given (see \ref{pairings}). 
Then there are maps, natural in $A,A'\in \A$,
$$ \begin{array}{rl}
\Phi_{A,A'}: \Mor_\B(L(A),L(A'))\to \Mor_\A(A,A'), & 
   g\mapsto \twve_{A'}\cdot R(g)\cdot\eta_A,\\[+1mm]
 L_{A,A'}:  \Mor_\A(A,A')\to \Mor_\B(L(A),L(A')), &  
   f \mapsto L(f),  
\end{array} $$   
$$\Phi_{A,A'}\cdot L_{A,A'}: \Mor_\A(A,A')\to \Mor_\A(A,A'),\;
 f\mapsto f\cdot \twve_A\cdot\eta_A = \twve_{A'}\cdot\eta_{A'}\cdot f.
  $$
\begin{itemize}
\item If $\twve\cdot \eta=I_\A$, then $\Phi\cdot L_{-,-}$  is the identity  
       ($L$ is  separable). 
\item If $\eta \cdot \twve\cdot \eta =\eta$, then 
$\Phi\cdot L_{-,-}\cdot\Phi = \Phi$
($\Phi\cdot L_{-,-}$ is idempotent).
\end{itemize}

The natural transformation
  $$\theta: LR\xra{L\eta R} LRLR \xra{L\twve R} LR$$
  is an $LR$-module as well  as an $LR$-comodule morphism. 
From diagram (\ref{frob-dia}) one immediately obtains the equalities
$$\begin{array}{c}
L\eta R\cdot \theta = LR\, \theta \cdot L\eta R = \theta\, LR \cdot L\eta R, \\[+1mm]
\theta \cdot L\twve R = L\twve R\cdot \theta\, LR = L\twve L\cdot LR\,\theta.
\end{array}$$
\end{thm}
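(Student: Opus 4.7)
The plan is to verify both that $\theta = L\twve R \cdot L\eta R$ is a morphism of $LR$-modules and of $LR$-comodules, and the two displayed chains of equalities, by a uniform strategy: unfold every whiskered transformation, re-associate using the coassociativity of the non-counital comonad $(LR,L\eta R)$ and the associativity of the non-unital monad $(LR,L\twve R)$, and then invoke one of the two non-trivial paths in the Frobenius diagram (\ref{frob-dia}). Concretely, the two auxiliary identities I would record first are
$$LRL\eta R\cdot L\eta R = L\eta RLR\cdot L\eta R, \qquad L\twve R\cdot LRL\twve R = L\twve R\cdot L\twve RLR,$$
which are just coassociativity and associativity expressed in whiskered form. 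Together with $LRL\twve R\cdot L\eta RLR = L\eta R\cdot L\twve R = L\twve RLR\cdot LRL\eta R$ read off from (\ref{frob-dia}), these are the only tools needed.

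For $\theta$ as an $LR$-module morphism I would verify $\theta\cdot L\twve R = L\twve R\cdot LR\theta$: expanding the right-hand side yields $L\twve R\cdot LRL\twve R\cdot LRL\eta R$; replacing $L\twve R\cdot LRL\twve R$ by $L\twve R\cdot L\twve RLR$ via monad associativity, and then collapsing $L\twve RLR\cdot LRL\eta R$ to $L\eta R\cdot L\twve R$ via the lower Frobenius path, produces $L\twve R\cdot L\eta R\cdot L\twve R = \theta\cdot L\twve R$. The dual computation proves the $LR$-comodule morphism claim $LR\theta\cdot L\eta R = L\eta R\cdot\theta$: from $LR\theta\cdot L\eta R = LRL\twve R\cdot LRL\eta R\cdot L\eta R$, swap $LRL\eta R\cdot L\eta R$ for $L\eta RLR\cdot L\eta R$ by coassociativity, and finish with the upper Frobenius path.

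The displayed equalities then follow at once. The inner equalities $L\eta R\cdot\theta = LR\theta\cdot L\eta R$ and $\theta\cdot L\twve R = L\twve R\cdot LR\theta$ (correcting the evident typo $L\twve L$) are just the bimodule-morphism statements proved above. The outer equalities, involving $\theta LR$ in place of $LR\theta$, are obtained by the same pattern — one coassociativity or associativity swap followed by a Frobenius collapse — but using the complementary path in (\ref{frob-dia}). The main obstacle is purely bookkeeping: the expressions involve four- and five-fold whiskerings of $\eta$ and $\twve$ on $L$ and $R$, and it is easy to confuse $L\eta RLR$ with $LRL\eta R$ or $L\twve RLR$ with $LRL\twve R$. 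I would therefore carry out the computations pictorially as pasting diagrams of $2$-cells, so that the Frobenius squares and the naturality squares are visible at a glance, rather than as a linear sequence of equalities.
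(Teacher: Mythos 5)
Your argument for the assertions about $\theta$ is correct and is precisely the paper's (unwritten) one: diagram (\ref{frob-dia}) is itself just naturality of $\eta$ and $\twve$, and combining its two paths with the coassociativity of $L\eta R$ and associativity of $L\twve R$ (both automatic, again by naturality, as recorded at the start of Section \ref{Frob-Frob}) yields all six equalities; you are also right that $L\twve L$ is a typo for $L\twve R$, and your ``inner/outer'' pairing of equalities with the two Frobenius paths checks out. The only omission is the first half of \ref{sep} --- naturality of $\Phi$ and $L_{-,-}$, the formula $\Phi\cdot L_{-,-}(f)=f\cdot\twve_A\cdot\eta_A=\twve_{A'}\cdot\eta_{A'}\cdot f$, and the two bullet points --- but these are one-line consequences of naturality of $\twve$ and $\eta$ and of that formula, so nothing essential is missing.
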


Similar relations are obtained for Frobenius bimodules.

\begin{thm}\label{sep-mod}{\bf Proposition.}
Given $\eta:I_\A\to RL$ and $\twve:RL\to I_\A$, let $(B,\varrho,\omega)$ be a Frobenius  $LR$-bimodule (see \ref{frob-mod}). Then 
\begin{center}
$ \varrho\cdot \omega \cdot \varrho = \varrho\cdot \theta_B$ \; and \;
  $\omega \cdot \varrho\cdot \omega =\theta_B\cdot \omega.$
\end{center}
\begin{zlist}
\item If  $\twve\cdot \eta=I_\A$, then  $\varrho\cdot \omega \cdot \varrho = \varrho$ and
  $\omega \cdot \varrho\cdot \omega = \omega.$ \\
Then, if $\varrho$ is an epimorphism in $\uB^{LR}$ or $\omega$ is a monomorphism in $\uB_{LR}$,
one gets $\varrho\cdot\omega =I_B$. 

\item If  $\eta\cdot\twve\cdot \eta =\eta$   
or $ \twve\cdot \eta \cdot \twve =\twve$, 
then  $\omega \cdot \varrho$ is an idempotent morphism.
\end{zlist}
\end{thm}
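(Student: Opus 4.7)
The plan is to extract both main identities directly from the Frobenius property (\ref{frob-prop}) combined with the (co)module axioms, and then whisker with $\twve$ and $\eta$ to obtain (1) and (2).

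First I would exploit the two factorisations $\omega\cdot\varrho = LR(\varrho)\cdot L\eta R_B = L\twve R_B\cdot LR(\omega)$ read off the two triangles of the Frobenius square. Postcomposing the first factorisation by $\varrho$ and using the associativity axiom $\varrho\cdot LR(\varrho)=\varrho\cdot L\twve R_B$ of the $LR$-module $(B,\varrho)$ immediately yields $\varrho\cdot\omega\cdot\varrho=\varrho\cdot L\twve R_B\cdot L\eta R_B=\varrho\cdot\theta_B$. The companion identity $\omega\cdot\varrho\cdot\omega=\theta_B\cdot\omega$ is obtained dually by precomposing the second factorisation with $\omega$ and invoking coassociativity $LR(\omega)\cdot\omega=L\eta R_B\cdot\omega$.

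For (1), whiskering gives $\theta=L(\twve\cdot\eta)R=I_{LR}$ under the hypothesis $\twve\cdot\eta=I_\A$, so the two main identities collapse to $\varrho\omega\varrho=\varrho$ and $\omega\varrho\omega=\omega$. The final cancellation step demands a small bookkeeping check: the Frobenius identities, read as commuting squares, exhibit $\varrho:(LR(B),L\eta R_B)\to(B,\omega)$ as a morphism in $\uB^{LR}$ and $\omega:(B,\varrho)\to(LR(B),L\twve R_B)$ as a morphism in $\uB_{LR}$. Since $\omega$ is automatically a comodule morphism $(B,\omega)\to(LR(B),L\eta R_B)$ by its own coaction axiom, the composite $\varrho\cdot\omega:B\to B$ lies in $\uB^{LR}$, and dually $\varrho\cdot\omega$ lies in $\uB_{LR}$. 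Thus $\varrho\omega\varrho=I_B\cdot\varrho$ may be cancelled in $\uB^{LR}$ when $\varrho$ is epi there, forcing $\varrho\cdot\omega=I_B$; the mono case is symmetric.

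For (2), I would compute $(\omega\varrho)^2$ in two ways using the main identities, obtaining $\omega\varrho\cdot\theta_B$ and $\theta_B\cdot\omega\varrho$ respectively. Substituting back the factorisations $\omega\varrho=LR(\varrho)\cdot L\eta R_B$ and $\omega\varrho=L\twve R_B\cdot LR(\omega)$ turns these into $LR(\varrho)\cdot L(\eta\cdot\twve\cdot\eta)R_B$ and $L(\twve\cdot\eta\cdot\twve)R_B\cdot LR(\omega)$, each of which returns to $\omega\varrho$ under the appropriate hypothesis. The one subtle point throughout is the category-theoretic care in (1): the epi/mono cancellation is legitimate only once one has verified that $\varrho\omega$ really lives in the relevant ambient (co)module category, and this verification is essentially packaged into the Frobenius square itself.
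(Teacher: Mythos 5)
Your proof is correct and follows essentially the same route as the paper: the paper packages the two factorisations of $\omega\cdot\varrho$ coming from the Frobenius triangles, the (co)associativity of $\varrho$ and $\omega$, and the resulting identities into a single commutative diagram (extended by $\omega$ or $\varrho$ for part (2)), which is precisely the equational chase you carry out. Your explicit check that $\varrho\cdot\omega$ and $I_B$ are morphisms in $\uB^{LR}$ (resp.\ $\uB_{LR}$) before cancelling the epi/mono is a detail the paper leaves implicit, and it is verified correctly.
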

\begin{proof}  The equalities claimed and (1) can be derived from the commutative diagram
$$
\xymatrix{& & LRLR(B)\ar[dr]^{LR(\varrho)}\ar[r]^{\quad L\twve_{R(B)}} & 
  LR(B)\ar[dr]^\varrho \\
B\ar[r]^{\omega\quad} \ar[dr]_\omega &
   LR(B) \ar[r]^{\quad\varrho} \ar[rd]^{LR(\omega)} \ar[ru]^{L\eta_{R(B)}} & B
 \ar[r]^{\omega\quad} & LR(B) \ar[r]_\varrho & B \\
& LR(B)\ar[r]_{L\eta_{R(B)}\quad}  & LRLR(B) \ar[ru]_{L\twve_{R(B)}} & & . 
}
$$

(2) To show this, extend the above diagram by $\omega$ on the right or by $\varrho$ on the left, respectively.
\end{proof}

\begin{thm}\label{comp-morph-bi}{\bf Compatible bimodule morphisms.} \em
Assume  $\eta:I_\A\to RL$ and $\twve:RL\to I_\A$ to be given. 
A morphism $h$ between Frobenius modules $(B,\varrho,\omega)$ and 
$(B',\varrho',\omega')$ is called 
{\em $\theta$-compatible}, provided it induces commutativity of the diagram
$$\xymatrix{ 
B \ar[d]_h \ar[r]^{\omega\quad} \ar[drr]^h & LR(B) \ar[r]^{\quad\varrho} & B \ar[d]^h\\
B' \ar[r]_{\omega'\quad} &  LR(B') \ar[r]_{\quad\varrho'} & B' .} $$
One easily obtains the following.
{\em
\begin{zlist}
\item 
The class $\K_\theta$  of all $\theta$-compatible bimodule morphisms in $\uB_{LR}^{LR}$ 
is an ideal class.
\item A morphism $h:Q\to LR(B)$ of $LR$-bimodules is in $\K_\theta$
  if and only if \; $\theta_B \cdot h = h$. 
\item A morphism $h:LR(B)\to Q$ of $LR$-bimodules is in $\K_\theta$  if and only if \;
  $h\cdot \theta_B = h$.
\item If $\twve\cdot\eta\cdot\twve=\twve$, then $L\twve R=\theta\cdot L\twve R$,
that is, $L\twve R$ is $\theta$-compatible.

\item If $\eta\cdot\twve\cdot \eta=\eta$, then $L\eta R=L\eta R \cdot\theta$,
that is, $L\eta R$ is $\theta$-compatible.

\item For a Frobenius bimodule $(B,\omega,\varrho)$,
$\omega$ is $\theta$-compatible if and only if $\omega=\omega\cdot\varrho\cdot \omega$,
and $\varrho$ is $\theta$-compatible if and only if 
$\varrho=\varrho\cdot \omega\cdot\varrho$.
\end{zlist}
}
\end{thm}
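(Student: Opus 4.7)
The plan is to prove all six items in turn, with (2), (3) and Proposition~\ref{sep-mod} doing most of the work and (4)--(6) then reducing to short computations. I would begin by unpacking the definition of $\theta$-compatibility for a morphism $h:(B,\varrho,\omega)\to(B',\varrho',\omega')$ of Frobenius bimodules: since $h$ is automatically an $LR$-module and $LR$-comodule morphism, the outer rectangle in the defining diagram commutes, whence $\varrho'\cdot\omega'\cdot h = \varrho'\cdot LR(h)\cdot\omega = h\cdot\varrho\cdot\omega$. Thus the two triangles encode the single identity $h = \varrho'\cdot\omega'\cdot h = h\cdot\varrho\cdot\omega$, and either form may be used.

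For (1), given $f\in\K_\theta$ and any bimodule morphism $g$ composable with $f$ on either side, closure of $\K_\theta$ under composition follows from a short calculation: using $f = \varrho'\cdot\omega'\cdot f$ (or its dual) and sliding $g$ through the structure maps via the module/comodule morphism identities of $g$ yields the required factorisation. For (2), applied to $h:Q\to LR(B)$, the target bimodule is the canonical one, so $\varrho_{LR(B)}\cdot\omega_{LR(B)} = L\twve_{R(B)}\cdot L\eta_{R(B)} = \theta_B$, and the bottom-triangle condition becomes exactly $\theta_B\cdot h = h$. Item (3) is strictly dual, using the top-triangle condition.

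For (4) and (5), I would first verify that $L\twve_{R(B)}:LRLR(B)\to LR(B)$ and $L\eta_{R(B)}:LR(B)\to LRLR(B)$ are bimodule morphisms between the canonical bimodules on $B$ and $LR(B)$: the module-/comodule-morphism identities follow from associativity and coassociativity of $LR$, while the cross properties are exactly the Frobenius diagram~(\ref{frob-dia}). Then, by functoriality of whiskering, $\theta\cdot L\twve R = L\twve R\cdot L\eta R\cdot L\twve R = L(\twve\cdot\eta\cdot\twve)R$, so the hypothesis $\twve\cdot\eta\cdot\twve=\twve$ gives $\theta\cdot L\twve R = L\twve R$, and (2) shows $L\twve R$ is $\theta$-compatible; the argument for (5) is dual, computing $L\eta R\cdot\theta = L(\eta\cdot\twve\cdot\eta)R$ and applying (3). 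Finally, (6) is immediate from (2), (3) combined with the identities $\omega\cdot\varrho\cdot\omega = \theta_B\cdot\omega$ and $\varrho\cdot\omega\cdot\varrho = \varrho\cdot\theta_B$ already recorded in Proposition~\ref{sep-mod}, after noting that $\omega$ and $\varrho$ are themselves bimodule morphisms by the Frobenius squares (II), (III) together with the (co)associativity squares (I), (IV) of the diagram in~\ref{frob-mod}.

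The only real obstacle is bookkeeping: making sure the auxiliary maps $L\eta_{R(B)}$, $L\twve_{R(B)}$, $\omega$, $\varrho$ genuinely qualify as bimodule morphisms so that (2) and (3) can be invoked, and choosing the canonical bimodule structures on $LRLR(B)$ in (4), (5) consistently. Once this is laid out, none of the individual verifications runs beyond a few lines.
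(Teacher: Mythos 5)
Your proposal is correct and takes the route the paper intends (the paper only says ``One easily obtains the following'' and leaves these verifications implicit): reducing the two triangles to the single identity $h=\varrho'\cdot\omega'\cdot h=h\cdot\varrho\cdot\omega$ via the outer square, identifying $\varrho'\cdot\omega'=\theta_B$ on the canonical bimodule $LR(B)$ for (2)/(3), the whiskering computations $\theta\cdot L\twve R=L(\twve\cdot\eta\cdot\twve)R$ and $L\eta R\cdot\theta=L(\eta\cdot\twve\cdot\eta)R$ for (4)/(5), and Proposition \ref{sep-mod} for (6). Your care in checking that $L\twve R$, $L\eta R$, $\omega$ and $\varrho$ are genuinely bimodule morphisms (via diagram (\ref{frob-dia}) and squares (I)--(IV) of \ref{frob-mod}) is exactly the bookkeeping needed, and nothing is missing.
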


The next result shows how (co)firm (co)modules enter the picture.

\begin{thm}\label{prop-w-sep}{\bf Proposition.}
Let  $\eta:I_\A\to RL$ and $\twve:RL\to I_\A$ be given
and consider a Frobenius $LR$-bimodule $(B,\varrho,\omega)$.
\begin{zlist}
\item 
  If $\omega$ is $\theta$-compatible, then $(B,\omega)$ is $\K_\theta$-cofirm;\\
 if $\varrho$ is $\theta$-compatible, then $(B,\varrho)$ is $\K_\theta$-firm.
 
\item If $\twve\cdot \eta\cdot\twve =\twve$, then $(LR(B),L\twve_{R(B)})$ is a $\K_\theta$-firm module;  \\
   if $\eta\cdot \twve\cdot \eta  =\eta$,
$(LR(B),L\eta_{R(B)})$ is a $\K_\theta$-cofirm comodule.
\end{zlist}
\end{thm}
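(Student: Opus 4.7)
The plan is to prove (1) directly, imitating the proof of Proposition \ref{d-equal}, and then to deduce (2) as an immediate consequence of (1) combined with items (4)--(5) of \ref{comp-morph-bi}, applied to the canonical Frobenius bimodules on $LR(B)$ supplied by diagram (\ref{frob-dia}).

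For the cofirm half of (1), assume $\omega$ is $\theta$-compatible; then $\omega$ itself lies in $\K_\theta$, so the first leg of the defining cofork
$$B\xra{\omega}LR(B)\;\rightrightarrows\; LRLR(B)$$
(with parallel arrows $L\eta R_B$ and $LR(\omega)$) already lies in $\K_\theta$. For a test morphism $h\colon Q\to LR(B)$ in $\K_\theta$ out of a Frobenius bimodule $(Q,\varrho_Q,\omega_Q)$ equalising this parallel pair, I would set $\widetilde h:=\varrho\cdot h\colon Q\to B$ and verify in turn: (a) $\omega\cdot\widetilde h=h$ via the chain
$$\omega\cdot\varrho\cdot h \;=\; L\twve R_B\cdot LR(\omega)\cdot h\;=\;L\twve R_B\cdot L\eta R_B\cdot h\;=\;\theta_B\cdot h\;=\;h,$$
using the Frobenius identity (\ref{frob-prop}), the equaliser hypothesis on $h$, and $h\in\K_\theta$ respectively; (b) $\widetilde h$ is a bimodule morphism, using that $h$ is one together with associativity of $\varrho$ on the module side and the Frobenius identity on the comodule side; (c) $\widetilde h\in\K_\theta$, the triangle $\varrho\cdot\omega\cdot\widetilde h=\varrho\cdot h=\widetilde h$ being immediate, and the triangle $\widetilde h\cdot\varrho_Q\cdot\omega_Q=\widetilde h$ reducing, via the module-morphism axiom for $h$ and the equaliser hypothesis, to the computation $L\twve R_B\cdot L\eta R_B\cdot h=\theta_B\cdot h=h$; (d) uniqueness, since any $q\in\K_\theta$ with $\omega\cdot q=h$ satisfies $q=\varrho\cdot\omega\cdot q=\varrho\cdot h=\widetilde h$. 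The firm half is strictly dual: for $h\colon LR(B)\to Q$ in $\K_\theta$ coequalising $LR(\varrho)$ and $L\twve R_B$, set $\widetilde h:=h\cdot\omega\colon B\to Q$ and run the mirror argument, with $\widetilde h\cdot\varrho=h$ following from the other instance of the Frobenius identity, $\omega\cdot\varrho = LR(\varrho)\cdot L\eta R_B$.

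For (2), diagram (\ref{frob-dia}) gives the canonical Frobenius bimodule $\bigl(LR(B),L\twve_{R(B)},L\eta_{R(B)}\bigr)$, so I apply (1) to it. Under the hypothesis $\twve\cdot\eta\cdot\twve=\twve$, item (4) of \ref{comp-morph-bi} says its $\varrho$-component $L\twve_{R(B)}$ is $\theta$-compatible, and the firm half of (1) yields that $\bigl(LR(B),L\twve_{R(B)}\bigr)$ is $\K_\theta$-firm; the cofirm statement follows symmetrically from item (5) of \ref{comp-morph-bi} applied to $L\eta_{R(B)}$. The main obstacle is verifying step (c): the non-obvious $\theta$-compatibility triangle for $\widetilde h$ requires first rewriting $h\cdot\varrho_Q\cdot\omega_Q$ via the module axiom for $h$ and the coaction $\omega_Q$, which brings $L\twve R_B\cdot L\eta R_B=\theta_B$ into view and closes using $h\in\K_\theta$; everything else is a bookkeeping exercise with the Frobenius property.
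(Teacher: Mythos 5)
Your proposal is correct and follows essentially the same route as the paper: the same chain $\omega\cdot\varrho\cdot h = L\twve R\cdot LR(\omega)\cdot h = L\twve R\cdot L\eta R\cdot h = \theta_B\cdot h = h$ for existence, the same uniqueness argument via $q=\varrho\cdot\omega\cdot q=\varrho\cdot h$, and part (2) deduced from part (1) together with \ref{comp-morph-bi}(4),(5) applied to the canonical bimodule on $LR(B)$. The extra verifications you supply (that $\widetilde h$ is a morphism and lies in $\K_\theta$) are details the paper leaves implicit, not a different argument.
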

\begin{proof} (compare Proposition \ref{d-equal})  (1)
For a  non-counital $LR$-comodule $(Q,\kappa)$, let $h:Q\to LR(B)$ be a  
comodule morphism with   $L\eta R \cdot h =LR(\omega)\cdot h$ and
$h=\theta\cdot h$. For $\widetilde h:=\varrho \cdot h$ 
we get 
$$ \omega\cdot \widetilde h= \omega \cdot\varrho \cdot h
  =L\twve R\cdot LR(\omega)\cdot h = L\twve R\cdot L\eta R\cdot h = h. $$
For any  $\theta$-compatible comodule morphism $q:Q\to B$ with $\omega \cdot q=h$,
we have  $g=\varrho \cdot \omega\cdot q =\varrho \cdot h =\widetilde h$, showing uniqueness of $\widetilde h$.

The second claim is shown similarly.
\smallskip

(2) In view of \ref{comp-morph-bi}, (4) and (5), the assertions follow from (1).
\end{proof}

\begin{thm}\label{equal}{\bf Proposition.}
Assume  $\eta:I_\A\to RL$ and  $\twve:RL\to I_\A$ to be given.
Let $\K$ be  an ideal class of $LR$-comodule morphisms and suppose 
$L\twve_{R(B)}$ in $\K$ for any $B\in \B$.
\begin{zlist}
\item If $(B,\omega)$ in $\uB^{LR}$ is a $\K$-cofirm comodule (see {\rm \ref{comp-mod}}),                                                                                                                               
 there is a unique $\varrho: LR(B)\to B$ in $\K$ making
$(B,\varrho,\omega)$ a Frobenius bimodule.
 
\item With this module structure, $LR$-comodule morphisms between $\K$-cofirm
$LR$-comodules  $(B,\omega)$  and $(B',\omega')$ are  
morphisms of the Frobenius bimodules $(B,\omega,\varrho)$ and  
$(B',\omega',\varrho')$. 
\end{zlist}
\end{thm}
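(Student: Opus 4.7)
The plan is to construct $\varrho$ via the universal property of the $\K$-equaliser applied to a well-chosen candidate, and to observe that most Frobenius bimodule axioms are either automatic from the construction or reducible to $\K$-cancellation.

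First, define $h := L\twve R_B \cdot LR(\omega) : LR(B) \to LR(B)$. Naturality of $L\eta R$ at $\omega$, together with the top triangle of diagram (\ref{frob-dia}) (which states $L\eta R \cdot L\twve R = LRL\twve R \cdot L\eta R LR$), shows that $LR(\omega)$ and $L\twve R_B$ are both $LR$-comodule morphisms; hence so is $h$, and since $L\twve R_B \in \K$ and $\K$ is an ideal class, $h \in \K$. Next I would verify the equaliser equation $L\eta R_B \cdot h = LR(\omega) \cdot h$: the bottom triangle of (\ref{frob-dia}), the comodule law $L\eta R_B \cdot \omega = LR(\omega) \cdot \omega$, and naturality of $L\twve R$ at $\omega$ transform $L\eta R_B \cdot L\twve R_B \cdot LR(\omega)$ successively into $L\twve R_{LR(B)} \cdot LRLR(\omega) \cdot LR(\omega)$ and then into $LR(\omega) \cdot L\twve R_B \cdot LR(\omega) = LR(\omega) \cdot h$. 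By the $\K$-cofirmness of $(B,\omega)$ there is then a unique $\varrho \in \K$ with $\omega \cdot \varrho = h$; this is exactly Frobenius square (II) of (\ref{frob-mod}), and the required uniqueness is the uniqueness clause of the $\K$-equaliser factorisation.

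Frobenius square (III), $\omega \cdot \varrho = LR(\varrho) \cdot L\eta R_B$, is now automatic: $\K$ consists of $LR$-comodule morphisms, so $\varrho : (LR(B), L\eta R_B) \to (B, \omega)$ satisfies the comodule-morphism identity, which is precisely (III). For the module axiom $\varrho \cdot LR(\varrho) = \varrho \cdot L\twve R_B$, I would compose both sides with $\omega$ on the left; using $\omega \cdot \varrho = h$, associativity of the non-unital monad $(LR, L\twve R)$, and naturality of $L\twve R$ at $\omega$, both composites collapse to $L\twve R_B \cdot L\twve R_{LR(B)} \cdot LRLR(\omega)$. Since $\varrho \cdot LR(\varrho)$ and $\varrho \cdot L\twve R_B$ both lie in $\K$ (using that $LR(\varrho)$ is a comodule morphism by naturality), the cancellation clause of the $\K$-equaliser (\ref{def-equ}) forces equality.

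For part (2), given a comodule morphism $f : (B, \omega) \to (B', \omega')$ between $\K$-cofirm comodules, I would show $f \cdot \varrho = \varrho' \cdot LR(f)$ by composing with $\omega'$ on the left. Using $\omega' \cdot f = LR(f) \cdot \omega$, $\omega \cdot \varrho = h$ (analogously $\omega' \cdot \varrho' = h'$), and naturality of $L\twve R$ at $f$, both $\omega' \cdot f \cdot \varrho$ and $\omega' \cdot \varrho' \cdot LR(f)$ reduce to $L\twve R_{B'} \cdot LR(\omega') \cdot LR(f) = h' \cdot LR(f)$; since $f \cdot \varrho$ and $\varrho' \cdot LR(f)$ both lie in $\K$, $\K$-cancellation at $(B', \omega')$ concludes. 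The main obstacle throughout is bookkeeping: keeping straight which coactions sit on which copies of $LR(B)$ and correctly invoking (\ref{frob-dia}), naturality, monad associativity, or the comodule law at each step. The conceptual punchline is that Frobenius square (III) falls out for free from $\varrho \in \K$, so only (II) and the module axiom genuinely need the equaliser.
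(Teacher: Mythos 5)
Your proof is correct and follows essentially the same route as the paper's: you construct $\varrho$ as the unique $\K$-factorisation of $L\twve R_B\cdot LR(\omega)$ through the $\K$-equaliser $\omega$, obtain square (II) by construction and (III) from $\varrho$ being a comodule morphism, and settle square (I) and part (2) by $\K$-cancellation against $\omega$ resp.\ $\omega'$. The only cosmetic difference is that you verify square (I) via associativity of the non-unital monad $(LR,L\twve R)$, where the paper instead invokes the Frobenius compatibility (\ref{frob-dia}) a second time together with naturality of $L\twve R$ at $\varrho$.
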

\begin{proof} (1) Consider the diagram (see \ref{frob-mod})
$$\xymatrix{
LRLR(B)\ar[d]_{L\twve R} \ar@{.>}[r]^{LR(\varrho)} \ar@{}[dr]|{{\rm (I)}} & 
    LR(B) \ar@{.>}[d]^\varrho \ar[r]^{L R(\omega)\quad} \ar@{}[dr]|{{\rm (II)}} & LRLR(B) \ar[d]^{L\twve R}  \\
LR(B) \ar@{.>}[r]^{\quad\varrho} \ar[d]_{L\eta R} \ar@{}[dr]|{{\rm (III)}} & 
B \ar[r]^{\omega\quad} \ar[d]^\omega  \ar@{}[dr]|{{\rm (IV)}} & LR(B) \ar[d]^{L\eta R} \\
  LRLR(B) \ar@{.>}[r]_{LR(\varrho)}  &  LR(B) \ar[r]_{LR(\omega)\quad} & LRLR(B) ,
}  
$$
where (IV) is assumed to be a  $\K$-equaliser. Since 
$$ \begin{array}{rcl}
 L\eta_{R(B)}\cdot L\twve_{R(B)}\cdot LR(\omega) &=
  & L\twve_{RLR(B)}\cdot L RL\eta_{R(B)}  \cdot LR (\omega)  \\
   &=& L \twve_{RLR(B)} \cdot  LRLR(\omega)     \cdot LR(\omega)   \\ 
 &=& L R (\omega)  \cdot  L\twve_{R(B)}  \cdot LR(\omega) ,  
\end{array} $$
and $ L\twve R_B  \cdot LR(\omega)$  
is in $\K$,
there exists a unique $\varrho:LR(B)\to B$ in $\K$ leading to the commutative 
diagram (II), and  (III) commutes since $\varrho$ is required to be a comodule morphism.  Moreover, 
$$ \begin{array}{rcl}
\omega \cdot \varrho \cdot L\twve_{R(B)} &=
  & LR(\varrho) \cdot L\eta_{R(B)} \cdot L\twve_{R(B)} \\
&=& LR(\varrho) \cdot L\twve_{RLR(B)}  \cdot LRL\eta_{R(B)} \\
&=&   L\twve_{R(B)} \cdot LRLR(\varrho)  \cdot LRL \eta_{R(B)} \\
&=&   L\twve_{R(B)} \cdot LR(\omega)  \cdot LR(\varrho)   
    \;=\; \omega \cdot \varrho \cdot LR(\varrho) ,  
\end{array} $$
and hence $\varrho \cdot L\twve_{R(B)} =\varrho \cdot LR\varrho$ since 
$\omega$ is a $\K$-equaliser. This means that the diagram  (I) is also commutative.
\smallskip

(2) Now let $h:B\to B'$ be an $LR$-comodule morphism. Then 
$$\begin{array}{rcl}
 \omega' \cdot h\cdot \varrho &=& LR(h) \cdot \omega \cdot \varrho\\
 &=& LR(h) \cdot L\twve_{R(B)} \cdot LR(\omega) \\
 &=& L\twve_{R(B')} \cdot LRLR(h) \cdot LR(\omega) \\
&=& L\twve_{R(B')}\cdot LR(\omega') \cdot LR(h)   
\; = \; \omega' \cdot \varrho' \cdot LR(h)  
\end{array}
$$
and, since both $h\cdot \varrho$ and  $\varrho' \cdot LR(h)$  are in $\K$, 
this implies that they are equal (see Definition \ref{def-equ}), that is, $h$ is also an 
$LR$-module morphism.
\end{proof}

Symmetric to Proposition \ref{equal} we get:

\begin{thm}\label{equal-d}{\bf Proposition.}
Assume  $\eta:I_\A\to RL$ and  $\twve:RL\to I_\A$ to be given.
Let $\K'$ be  an ideal class of $LR$-module morphisms
and suppose $L\eta_{R(B)}$ belongs to $\K'$ for any $B\in \B$.
\begin{zlist}
\item If $(B,\varrho)$ in $\uB_{LR}$ is a $\K'$-firm module (see {\rm\ref{monads}}),                                                                                                                             
 there is a unique $\omega: B\to LR(B)$ in $\K'$ making
$(B,\varrho,\omega)$ a Frobenius bimodule.
 
\item With this comodule structure, $LR$-module morphisms between $\K'$-firm
$LR$-modules  $(B,\varrho)$  and $(B',\varrho')$ are  
morphisms of the Frobenius bimodules $(B,\omega,\varrho)$ and  
$(B',\omega',\varrho')$. 
\end{zlist}
\end{thm}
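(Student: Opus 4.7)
The strategy is to dualise the proof of Proposition~\ref{equal}: the defining fork of the firm module $(B,\varrho)$ is a $\K'$-coequaliser, so I will feed its universal property a suitable candidate $h:LR(B)\to LR(B)$ for $\omega\cdot\varrho$ in order to produce $\omega$, and then verify the remaining Frobenius and coassociativity axioms using the cancellation rule of the coequaliser.

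Set $h:=LR(\varrho)\cdot L\eta_{R(B)}$. Since $L\eta_{R(B)}\in \K'$ and $\K'$ is an ideal class, $h\in\K'$. To check that $h$ coequalises the parallel pair $L\twve_{R(B)},LR(\varrho)$, I apply the Frobenius identity $L\eta R\cdot L\twve R = LRL\twve R\cdot L\eta RLR$ from diagram~(\ref{frob-dia}), the module associativity $\varrho\cdot L\twve_{R(B)}=\varrho\cdot LR(\varrho)$, and naturality of $L\eta R$ on $\varrho$, yielding
$h\cdot L\twve_{R(B)} = LR(\varrho)\cdot LRL\twve_{R(B)}\cdot L\eta_{R(LR(B))} = LR(\varrho\cdot LR(\varrho))\cdot L\eta_{R(LR(B))} = LR(\varrho)\cdot L\eta_{R(B)}\cdot LR(\varrho) = h\cdot LR(\varrho)$.
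The $\K'$-coequaliser then produces a unique $\omega:B\to LR(B)$ in $\K'$ with $\omega\cdot\varrho=h$, which is square~(III) of~\ref{frob-mod}. Because $\omega\in\K'$ and $\K'$ consists of $LR$-module morphisms, the Frobenius square~(II), expressing that $\omega$ is a module morphism $(B,\varrho)\to(LR(B),L\twve_{R(B)})$, holds automatically.

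For the comodule coassociativity~(IV), both $L\eta_{R(B)}\cdot\omega$ and $LR(\omega)\cdot\omega$ lie in $\K'$ by the ideal property; composing each with $\varrho$ and substituting~(III) reduces both to $LRLR(\varrho)\cdot L\eta_{R(LR(B))}\cdot L\eta_{R(B)}$, using naturality of $L\eta R$ on $\varrho$ for the first and coassociativity of the non-counital coproduct $L\eta R$ on $LR$ for the second. The cancellation rule for $\K'$-coequalisers then forces $L\eta_{R(B)}\cdot\omega = LR(\omega)\cdot\omega$. Uniqueness of $\omega$ in part~(1) follows from the same cancellation applied to~(III). For part~(2), given an $LR$-module morphism $h:B\to B'$ between $\K'$-firm modules, both $\omega'\cdot h$ and $LR(h)\cdot\omega$ lie in $\K'$, and composing each with $\varrho$, using that $h$ is a module morphism, (III) on both sides, and naturality of $L\eta R$, yields equal morphisms; cancellation gives the claim.

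The only real bit of care needed throughout is bookkeeping of $\K'$-membership: the hypothesis $L\eta_{R(B)}\in\K'$ is precisely what licenses the candidate $h$, and every subsequent composition lies in $\K'$ by the ideal closure property, so the coequaliser cancellation rule applies wherever it is invoked.
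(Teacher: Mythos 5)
Your argument is correct and is exactly the dualisation the paper intends: the source gives no separate proof of this proposition, stating only that it is ``symmetric to Proposition~\ref{equal}'', and your candidate $h=LR(\varrho)\cdot L\eta_{R(B)}$, the use of the Frobenius identity plus module associativity to show it coequalises the defining pair, and the cancellation arguments for (IV), uniqueness, and part (2) mirror step for step the paper's proof of Proposition~\ref{equal}. The only step left tacit (as it also is in the paper's own dual argument) is that $L\eta_{R(B)}$ is itself an $LR$-module morphism --- which follows from diagram~(\ref{frob-dia}) --- so that the ideal-class bookkeeping takes place inside $\uB_{LR}$ as required.
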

 
So far we have only considered the case when $\alpha$ and 
$\tb$ (in \ref{pairings}) exist. Now we want to include more 
mappings in our assumptions.

\begin{thm}\label{theta-gamma}{\bf Lemma.}
Refer to the notation in {\rm \ref{pairings}} and {\rm \ref{sep}}.
\begin{zlist}
\item  Let  $(L,R,\alpha, \beta)$ be any pairing  and 
 $\twve: RL\to I_\A$ a natural transformation satisfying  
 $\eta\cdot\twve\cdot \eta =\eta$.
Then \; $\tl \tr\cdot \theta  =\tl\tr$.
\item 
 Let  $(R,L,\talpha, \tbeta)$ be any pairing  and $\eta:I_\A\to RL$ a
natural transformation satisfying  $\twve\cdot \eta\cdot\twve  =\twve$.
Then \; $ \theta\cdot\ttl \ttr=\ttl\ttr$.
\end{zlist} 
\end{thm}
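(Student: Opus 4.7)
The plan is to verify both identities pointwise at an object $B \in \B$, using functoriality of $L$ to collect $L$-images and the Godement interchange law to decompose the horizontal composites $\tl\tr$ and $\ttl\ttr$. The basic observation, immediate from the definitions in \ref{sep} and \ref{pairings}, is that
\[\theta_B \;=\; L\twve_{R(B)}\cdot L\eta_{R(B)} \;=\; L(\twve_{R(B)}\cdot\eta_{R(B)}),\]
so both identities should reduce to a single application of the hypothesis inside an outer $L$-image, the relevant instance being at the object $R(B)\in\A$.

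For part (1), I decompose $(\tl\tr)_B = \tl_{R(B)}\cdot L(\tr_B)$, which places $L(\tr_B)$ adjacent to $\theta_B$. By functoriality of $L$,
\[(\tl\tr\cdot\theta)_B \;=\; \tl_{R(B)}\cdot L\bigl(\tr_B\cdot\twve_{R(B)}\cdot\eta_{R(B)}\bigr).\]
Substituting $\tr_B = R\ve_B\cdot\eta_{R(B)}$ from \ref{pairings} exposes the pattern $\eta_{R(B)}\cdot\twve_{R(B)}\cdot\eta_{R(B)}$ inside the $L$-image, which the hypothesis $\eta\cdot\twve\cdot\eta = \eta$ at $R(B)$ collapses to $\eta_{R(B)}$. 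The interior returns to $R\ve_B\cdot\eta_{R(B)} = \tr_B$, and the composite becomes $\tl_{R(B)}\cdot L(\tr_B) = (\tl\tr)_B$.

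Part (2) is entirely dual. I use the other Godement decomposition $(\ttl\ttr)_B = L(\ttr_B)\cdot\ttl_{R(B)}$, so that $L(\ttr_B)$ stands immediately after $\theta_B$ and functoriality of $L$ amalgamates the two $L$-images:
\[(\theta\cdot\ttl\ttr)_B \;=\; L\bigl(\twve_{R(B)}\cdot\eta_{R(B)}\cdot\ttr_B\bigr)\cdot\ttl_{R(B)}.\]
Substituting $\ttr_B = \twve_{R(B)}\cdot R\tweta_B$ exposes the pattern $\twve_{R(B)}\cdot\eta_{R(B)}\cdot\twve_{R(B)}$, which the hypothesis $\twve\cdot\eta\cdot\twve = \twve$ at $R(B)$ collapses to $\twve_{R(B)}$, restoring $\ttr_B$ and yielding $L(\ttr_B)\cdot\ttl_{R(B)} = (\ttl\ttr)_B$.

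The only real obstacle is choosing the correct Godement decomposition in each part, namely the one that positions the $L$-image next to $\theta_B$; once that choice is made, a single application of functoriality of $L$ followed by the hypothesis instantiated at $R(B)\in\A$ does all the work. No appeal to the Frobenius diagram \eqref{frob-dia} or to symmetry of $\alpha,\beta,\ta,\tb$ is required.
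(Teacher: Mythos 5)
Your computation is correct and is exactly the direct verification from the definitions that the paper's proof (which simply states the assertions "follow immediately from the definitions") has in mind: the key identities $\theta_B=L(\twve_{R(B)}\cdot\eta_{R(B)})$, $\tr_B=R\ve_B\cdot\eta_{R(B)}$, $\ttr_B=\twve_{R(B)}\cdot R\tweta_B$ and the interchange decompositions of $\tl\tr$ and $\ttl\ttr$ are all applied correctly, and the hypothesis instantiated at $R(B)$ does the rest. Nothing further is needed.
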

\begin{proof}
The assertions  follow immediately from the definitions.
\end{proof}

\begin{thm}\label{reg-basic}{\bf Theorem.}
Let $(L,R,\alpha, \beta)$ be a regular pairing with $\beta$ symmetric
and $\twve:RL\to I_\A$ any natural transformation.  
Then,  
\begin{zlist}
\item $\whve := \twve\cdot \tr\tl :  RL\to I_\A$ is a natural 
transformation with $\whve = \whve \cdot \tr\tl$. Furthermore, 
$\tl\tr \cdot L\whve R = L\whve R$, that is, 
%the natural transformation
$L\whve R$ is $\tl\tr$-compatible as an $LR$-comodule morphism;

\item $(LR,L\eta R,\ve)$ is a weak comonad and if $\omega:B\to LR(B)$
 is an $\tl\tr$-compatible $LR$-comodule, there is a unique
$\varrho:LR(B)\to B$ in $\K_{\tl\tr}$ making
$(B,\varrho,\omega)$ a Frobenius $(LR,\eta,\whve)$-module,
  given by   
$$  
\varrho : LR(B)\xra{LR(\omega)}LRLR(B)\xra{L\whve_{R(B)}} LR(B)\xra{\ve_B} B;
$$
\item  morphisms between 
$\tl\tr$-compatible $LR$-comodules $(B,\omega)$, $(B',\omega')$  
are $LR$-bimodule morphisms between $(B,\varrho,\omega)$ and
$(B',\varrho',\omega')$.
\end{zlist}
\end{thm}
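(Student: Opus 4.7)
The plan is to reduce parts (2) and (3) to Propositions~\ref{d-equal} and~\ref{equal} applied with $\twve$ replaced by the corrected transformation $\whve$, after part (1) supplies the compatibility that validates this substitution.

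For part (1), since $\tl$ and $\tr$ are idempotents by Theorem~\ref{reg-p} and their whiskerings $\tr L$ and $R\tl$ commute by the interchange law, the natural transformation $\tr\tl$ on $RL$ is itself idempotent; hence $\whve\cdot\tr\tl=\twve\cdot\tr\tl\cdot\tr\tl=\whve$. For the second assertion, the symmetry of $\beta$ gives $\tl\tr=L\tr=\tl R$ on $LR$ and equivalently $R\tl R=RL\tr$, so it suffices to show $\tr\cdot\whve R=\whve R$. Naturality of $\whve:RL\to I_\A$ applied to the morphism $\tr_B:R(B)\to R(B)$ yields $\tr\cdot\whve R=\whve R\cdot RL\tr=\whve R\cdot R\tl R$; expanding $\whve R=\twve R\cdot\tr LR\cdot R\tl R$ and invoking $\tl\cdot\tl=\tl$ then collapses the right-hand side back to $\whve R$.

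For part (2), \ref{symm-com} tells us that when $\beta$ is symmetric, $(LR,L\eta R,\ve)$ is a weak comonad whose idempotent $\gamma=LR\ve\cdot L\eta R=L\tr$ coincides with $\tl\tr$, so $\K_\gamma=\K_{\tl\tr}$ and $\tl\tr$-compatibility of a comodule is precisely the compatibility notion of~\ref{comp-morph}. Proposition~\ref{d-equal} then guarantees $(B,\omega)$ is $\K_{\tl\tr}$-cofirm, and part (1) ensures $L\whve_{R(B)}\in\K_{\tl\tr}$, so Proposition~\ref{equal} (applied with $\twve$ replaced by $\whve$ and $\K=\K_{\tl\tr}$) produces a unique $\varrho\in\K_{\tl\tr}$ making $(B,\varrho,\omega)$ a Frobenius $(LR,\eta,\whve)$-bimodule, characterised by $\omega\cdot\varrho=L\whve_{R(B)}\cdot LR(\omega)$. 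To match the stated closed form $\varrho=\ve_B\cdot L\whve_{R(B)}\cdot LR(\omega)$, I verify it satisfies this identity: naturality of $\ve$ turns $\omega\cdot\ve_B$ into $\ve_{LR(B)}\cdot LR(\omega)$; naturality of $L\whve R$ together with the coassociativity $LR(\omega)\cdot\omega=L\eta R_B\cdot\omega$ isolates the factor $\ve_{LR(B)}\cdot L\eta_{R(B)}$, which equals $\tl_{R(B)}$ via the formula $\tl=\ve L\cdot L\eta$ from~\ref{pairings}; finally part (1) absorbs this trailing $\tl_{R(B)}$ into $L\whve_{R(B)}$.

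Part (3) is then a direct invocation of Proposition~\ref{equal}(2): because $\K_{\tl\tr}$ is an ideal class, both $h\cdot\varrho$ and $\varrho'\cdot LR(h)$ lie in $\K_{\tl\tr}$ for any $LR$-comodule morphism $h$ between the compatible comodules, and the $\K_{\tl\tr}$-equaliser property forces them to coincide, so $h$ is also an $LR$-module morphism. The main technical hurdle is concentrated in part (1)---specifically the identity $\tr\cdot\whve R=\whve R$---which is the engine that allows the unstructured $\twve$ to be replaced by the $\tr\tl$-compatible version $\whve$; once this is in place, parts (2) and (3) are routine applications of the earlier propositions.
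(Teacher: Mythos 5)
Your proposal is correct and follows essentially the same route as the paper: part (1) by symmetry of $\beta$, naturality and idempotence of $\tl,\tr$; part (2) by combining Propositions \ref{d-equal} and \ref{equal} for the weak comonad $(LR,L\eta R,\ve)$ with $\twve$ replaced by $\whve$; part (3) by Proposition \ref{equal}(2). The only cosmetic difference is in verifying the closed form for $\varrho$: the paper reads it off from the $\tl\tr$-compatibility of $\varrho$ (namely $\varrho=\ve_B\cdot\omega\cdot\varrho$), whereas you check the candidate satisfies $\omega\cdot\varrho=L\whve_{R(B)}\cdot LR(\omega)$ and invoke uniqueness — which is fine, since the factorisation produced in Proposition \ref{d-equal} is precisely $\ve_B\cdot h$, so the candidate is automatically the comodule morphism in $\K_{\tl\tr}$ to which uniqueness applies.
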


\begin{proof} (1) By our symmetry assumption,  $\tl R=L\tr$ and the diagram
$$\xymatrix{ LRL \ar[d]_{L\whve R}
\ar[rr]^{L\tr\tl R}_{\tl R L \tr}  
 \ar[drr]_{L\whve R}
          &  & LRLR \ar[d]^{L\whve R}\\
  LR \ar[rr]_{\tl\tr} & & LR }
$$
commutes,  showing  $L\whve R =\tl\tr\cdot L\whve R$.
 
(2) As shown in Proposition \ref{d-equal}, $(B,\omega)$ is $\K_{\tl\tr}$-cofirm and 
hence the existence of $\varrho$ follows by Proposition \ref{equal}.  
 For the Frobenius module $(B,\varrho,\omega)$,  we have the commutative diagram 
$$
\xymatrix{ 
& LRLR(B) \ar[dr]^{LR(\varrho)} \ar[r]^{\quad\ve LR} & LR(B) \ar[dr]^{\varrho}   \\
     LR(B) \ar[r]_{\quad\varrho}   \ar[ru]^{L\eta R} \ar[rd]_{LR(\omega)} & B
 \ar[r]_{\omega\quad} & LR(B)\ar[r]_{\quad\ve_B} & B  \\
  & LRLR(B) \ar[ru]_{L\whve R}& & .
    }
$$
Since $\varrho$ is $\tl R$-compatible, the upper paths yields  
$\varrho\cdot \tl R=\varrho$.
 The lower path is the composite given for $\varrho$. 
\smallskip

(3) Since $\K_{\tl\tr}$ is an ideal class, the assertion about the bimodule morphisms
 follows by Proposition \ref{equal}.
\end{proof}

Instead of $(L,R,\alpha,\beta)$, we may require $(R,L,\talpha, \tbeta)$ to be 
a regular pairing 
(see \ref{pairings}) and relate the  bimodules for $(LR,\eta,\twve)$ with modules
for $(LR,L\twve R)$. By symmetry we obtain:

\begin{thm}\label{reg-basic-d}{\bf Theorem.}
Let $(R,L,\talpha, \tbeta)$ be a regular pairing of functors with $\talpha$ symmetric
 and $\eta:I_\A\to  RL$ any natural transformation. Then,  
\begin{zlist}
\item 
 $\wh\eta := \ttr\ttl\cdot \eta: I_\A  \to RL$ 
is a natural transformation with $\wh{\eta} = \ttr\ttl \cdot\wh{\eta}$ and
 $L\wh{\eta} R = \ttr\ttl \cdot L\wh{\eta} R$, that is, 
$L\wh{\eta} R$ is $\ttr\ttl$-compatible as an $LR$-module morphism 
(see {\rm \ref{comp-morph-mod}});
\item 
$(LR,L\twve R,\tweta)$ is a weak monad and if $\varrho:LR(B)\to B$ is 
 an $\ttl\ttr$-compatible $LR$-module, there is a unique
%$\gamma$-compatible $LR$-comodule 
$\omega:B\to LR(B)$ in $\K_{\ttl\ttr}$ making
$(B,\varrho,\omega)$ a Frobenius $(LR,\wh\eta,\twve)$-bimodule given by   
$$ 
\omega :  B\xra{\tweta_B} LR(B)\xra{L\wh{\eta}_{R(B)}}LRLR(B)\xra{LR(\varrho)} LR(B); 
$$ 
\item  %Any 
 morphisms between 
$\ttl\ttr$-compatible $LR$-modules $(B,\varrho)$, $(B',\varrho')$  
are $(LR,\wh\eta,\twve)$-bimodule morphism between $(B,\varrho,\omega)$ and 
 $(B',\varrho',\omega')$. 
\end{zlist}
\end{thm}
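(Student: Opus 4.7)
The plan is to mirror the proof of Theorem \ref{reg-basic} via the duality that swaps the roles of $\alpha,\eta,\ve,\tl,\tr$ with those of $\talpha,\tweta,\twve,\ttr,\ttl$ and exchanges modules with comodules. Regularity of $(R,L,\talpha,\tbeta)$ produces the splitting idempotents $\ttl:L\to L$ and $\ttr:R\to R$, and the symmetry of $\talpha$ yields $R\ttl=\ttr L$ by the dual of \ref{reg-p}. By the statement symmetric to \ref{symm-mon}, one moreover obtains a weak monad structure $(LR,L\twve R,\tweta)$, and its associated idempotent $\vartheta=L\twve R\cdot\tweta LR$ is precisely the whiskering of $\ttl$ by $R$, which coincides with the natural transformation denoted $\ttl\ttr$ on $LR$ under the symmetry hypothesis.

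For (1), the identity $\wh\eta=\ttr\ttl\cdot\wh\eta$ is immediate from the idempotence of the horizontal composition $\ttr\ttl$: by the interchange law, $(\ttr\ttl)^2=(\ttr\circ\ttr)\ast(\ttl\circ\ttl)=\ttr\ttl$, so $\ttr\ttl\cdot\wh\eta=(\ttr\ttl)^2\cdot\eta=\ttr\ttl\cdot\eta=\wh\eta$. The equation $L\wh\eta R=\ttr\ttl\cdot L\wh\eta R$ is then established by a commutative square dual to the one exhibited in the proof of \ref{reg-basic}(1), in which the rewriting $R\ttl=\ttr L$ supplied by symmetry is precisely the step that makes the diagram commute; by \ref{comp-morph-mod}(2), this is exactly the assertion that $L\wh\eta R$ is compatible as an $LR$-module morphism.

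For (2), given a $\ttl\ttr$-compatible $LR$-module $(B,\varrho)$, Proposition \ref{r-equal} shows that $(B,\varrho)$ is $\K_{\ttl\ttr}$-firm, and part (1) guarantees that $L\wh\eta_{R(B)}$ belongs to $\K_{\ttl\ttr}$. Proposition \ref{equal-d} (with the ideal class $\K'=\K_{\ttl\ttr}$) then applies, delivering a unique $\omega:B\to LR(B)$ in $\K_{\ttl\ttr}$ for which $(B,\varrho,\omega)$ is a Frobenius bimodule. The explicit expression for $\omega$ is read off from the diagram dual to the one used in the proof of \ref{reg-basic}(2): starting from $\tweta_B$, the Frobenius property (\ref{frob-dia}) forces the comodule structure to factor through $L\wh\eta_{R(B)}$ and then through the module action $LR(\varrho)$, giving $\omega=LR(\varrho)\cdot L\wh\eta_{R(B)}\cdot\tweta_B$.

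For (3), since $\K_{\ttl\ttr}$ is an ideal class by \ref{comp-morph-mod}(1), the claim that $LR$-module morphisms between $\ttl\ttr$-compatible modules are automatically bimodule morphisms is a direct consequence of the second assertion of Proposition \ref{equal-d}. The main obstacle throughout is the careful bookkeeping between the two idempotents $\ttl\ttr:LR\to LR$ and $\ttr\ttl:RL\to RL$ and the identification of their appropriate whiskerings with $\vartheta$ via the symmetry of $\talpha$; once this identification is made at each step, the diagrams copy over from the proof of \ref{reg-basic} with modules and comodules interchanged.
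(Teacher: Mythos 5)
Your proposal is correct and coincides with the paper's treatment: the paper gives no separate argument for Theorem \ref{reg-basic-d} but derives it ``by symmetry'' from Theorem \ref{reg-basic}, and your explicit appeal to the dual of \ref{symm-mon} for the weak monad $(LR,L\twve R,\tweta)$, to Proposition \ref{r-equal} for $\K_{\ttl\ttr}$-firmness, and to Proposition \ref{equal-d} for the existence, uniqueness and functoriality of $\omega$ is exactly that dualisation written out. One bookkeeping slip worth fixing: symmetry of $\talpha$ yields the identity $L\ttr=\ttl R$ of endomorphisms of $LR$ (the analogue of $R\tl=\tr L$ after interchanging the roles of the two functors in the pairing), not $R\ttl=\ttr L$ on $RL$; it is the former identity that identifies $\vartheta=L\twve R\cdot LR\tweta=L\ttr$ with $\ttl\ttr$ and makes your dual square commute, so the argument goes through once this is corrected.
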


\section{Weak Frobenius monads} \label{weak-frob}

As we have seen in the previous section, for results on the interplay between 
(co)module and bimodule structures for Frobenius monads symmetry 
conditions on our pairings were needed, that is, the intrinsic non-(co)unital (co)monads 
became weak (co)monads. Hence we will concentrate in this section on this kind of  
(co)monads and also apply results from Section \ref{cofirm}. 
 
 \begin{thm} \label{w-frob} {\bf Frobenius property.} \em
Let $(F,m)$ be a non-unital monad, $(F,\delta)$ a non-counital
comonad, $(B,\varrho) \in \uB_F$ and  $(B,\omega) \in \uB^F$.
We say that $(F,m,\delta)$ satisfies the 
{\em Frobenius property} and $(B,\varrho,\omega)$ is a {\em Frobenius bimodule}, 
 provided they induce commutativity of the respective diagrams, 
$$ 
\xymatrix{ & F F F   \ar[rd]^{F m} \\
F F   \ar[ru]^{ \delta F } \ar[rd]_{F \delta}\ar[r]^{\quad m } & 
F  \ar[r]^{\delta \quad} & F F  \\
     & F F F  \ar[ru]_{m F } & ,} \qquad  
\xymatrix{ & F F (B) \ar[rd]^{F (\varrho)} \\
F(B) \ar[ru]^{ \delta_B } \ar[rd]_{F (\omega)}\ar[r]^{\quad \varrho} & 
B  \ar[r]^{\omega \quad} & F (B) \\
     & F F (B) \ar[ru]_{m_B} & .} 
$$
\end{thm}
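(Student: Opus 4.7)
The block labelled \ref{w-frob} is a \emph{definition} rather than a proposition: it names two diagrams (one for the pair $(F,m,\delta)$, one for a triple $(B,\varrho,\omega)$) and declares that commutativity of the former is the ``Frobenius property'' of $(F,m,\delta)$ and commutativity of the latter makes $(B,\varrho,\omega)$ a ``Frobenius bimodule''. There is no universally quantified claim, no existence/uniqueness assertion, and no equivalence to be established, so there is literally nothing to prove. My plan is therefore simply to record this and note what a reader should verify on their own to see that the definition is coherent with what has already appeared.

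The two implicit coherence observations I would include as a remark (rather than as a proof) are the following. First, Definition \ref{w-frob} generalises \ref{frob-mod}: substituting $F=LR$, $m=L\twve R$, $\delta=L\eta R$ turns the left-hand diagram into exactly the commuting square (\ref{frob-dia}) established there by naturality, and turns the right-hand diagram into the diagram (\ref{frob-prop}) defining Frobenius $LR$-bimodules. Second, if $(F,m,\delta)$ satisfies the Frobenius property, then for every $B\in\B$ the free object $(F(B),m_B,\delta_B)$ is a Frobenius bimodule in the sense just defined; this is immediate by evaluating the left-hand diagram at $B$, and it is a separate (subsequent) step in the paper to promote this into a functor $\B\to \B^F_F$.

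There is no ``main obstacle'' since no argument is being asked for. The only writing-level issue is that the \texttt{thm} environment is being used here to house a definition, which can mislead the reader into looking for a theorem; I would flag this presentational point but not attempt a fabricated proof.
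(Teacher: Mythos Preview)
Your assessment is correct: block \ref{w-frob} is a definition (note the \texttt{\textbackslash em} following the boldface heading, which is the paper's convention for definitional blocks), and the paper accordingly supplies no proof. Your coherence remarks about the specialisation to $F=LR$ and the free object $(F(B),m_B,\delta_B)$ are accurate and helpful, though strictly speaking not required.
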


The Frobenius bimodules as objects and
the morphisms, which are $F$-module as well as $F$-comodule morphisms,  
form a category which we denote by $\uB_F^F$. Transferring the Propositions \ref{equal} and \ref{equal-d} yields:

\begin{thm} \label{w-firm} {\bf Theorem.}
Assume $(F,m,\delta)$ to satisfy the Frobenius property.
Let $(F,\delta,\ve)$ be a weak comonad, $\gamma:=\ve F\cdot \delta$,
and assume $m= \gamma\cdot m$. Then,
\begin{zlist}
\item  for any $\gamma$-compatible $F$-comodule $(B,\omega)$, there is a unique 
$\gamma$-com\-patible $F$-comodule morphism 
$$\varrho: F(B) \xra{F(\omega)} FF(B)\xra{ m_B} F(B) \xra{\ve_B} B$$  
 making $(B,\varrho,\omega)$ a Frobenius bimodule;

\item  any $F$-comodule morphism between $\gamma$-compatible comodules $(B,\omega)$ 
 and $(B',\omega')$ becomes a morphism between the Frobenius bimodules 
 $(B,\varrho,\omega)$ and $(B',\varrho',\omega')$; 

\item there is an isomorphism of categories  
$$\Psi: \ol\B^{F} \to \ol\B^{F}_{F}, \quad (B,\omega) \mapsto (B,\varrho,\omega),$$
with the forgetful functor $U_{F}:\ol\B^{F}_{F} \to \ol\B^{F}$ as inverse, 
where $\ol\B^{F}_{F}$ denotes the category of Frobenius bimodules which are 
$\gamma$-compatible as $F$-comodules.
\end{zlist}
\end{thm}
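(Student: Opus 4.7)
The plan is to identify this theorem as an instance of Proposition \ref{equal} transported from the pairing setting $F = LR$ of Section \ref{Frob-Frob} to the present abstract weak-(co)monad framework, specialised to the ideal class $\K := \K_\gamma$ of $\gamma$-compatible $F$-comodule morphisms from \ref{comp-morph}. The proof of \ref{equal} invokes only the Frobenius interplay between a non-unital monad and a non-counital comonad together with the hypothesis that the multiplication lies in $\K$, so its diagram chases carry over verbatim once the hypotheses are checked in our setting.

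The single nontrivial hypothesis to verify is that $m_B \in \K_\gamma$ for every $B \in \B$. The Frobenius identity $\delta\cdot m = Fm\cdot\delta F$ exhibits $m:(FF,\delta F)\to(F,\delta)$ as a morphism of cofree $F$-comodules, while the standing assumption $m=\gamma\cdot m$ reads at $B$ as $\gamma_B\cdot m_B = m_B$, which by \ref{comp-morph}(2) places $m_B$ in $\K_\gamma$. Proposition \ref{d-equal} furnishes the complementary ingredient: every $\gamma$-compatible $F$-comodule $(B,\omega)$ is $\K_\gamma$-cofirm. Invoking Proposition \ref{equal} thus yields a unique $\varrho\in\K_\gamma$ with $\omega\cdot\varrho = m_B\cdot F(\omega)$ making $(B,\varrho,\omega)$ a Frobenius bimodule, which proves (1); statement (2) is Proposition \ref{equal}(2) verbatim. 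The stated explicit formula follows from
$$\ve_B\cdot m_B\cdot F(\omega) \;=\; \ve_B\cdot\omega\cdot\varrho \;=\; \ve_B\cdot F(\varrho)\cdot\delta_B \;=\; \varrho\cdot\ve_{F(B)}\cdot\delta_B \;=\; \varrho\cdot\gamma_B \;=\; \varrho,$$
using in order the defining equation of $\varrho$, the comodule-morphism property $\omega\cdot\varrho = F(\varrho)\cdot\delta_B$, naturality of $\ve$, the definition of $\gamma$, and the $\gamma$-compatibility of $\varrho$ via \ref{comp-morph}(3).

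For (3) I would set $\Psi(B,\omega) := (B,\varrho,\omega)$ with $\varrho$ produced in (1), acting identically on underlying morphisms; part (2) then makes $\Psi$ a functor $\ol\B^F\to\ol\B^F_F$, and $U_F\Psi$ is by construction the identity on $\ol\B^F$. For the reverse equality $\Psi\, U_F = I_{\ol\B^F_F}$, given any $(B,\varrho,\omega)\in\ol\B^F_F$ the same displayed calculation, which needed only that $\varrho$ is a $\gamma$-compatible comodule morphism, yields $\varrho = \ve_B\cdot m_B\cdot F(\omega)$, so $\varrho$ coincides with the unique datum produced in (1). The main obstacle I anticipate is the careful bookkeeping around this last step: one must read ``$\gamma$-compatibility as $F$-comodules'' in the definition of $\ol\B^F_F$ so that the accompanying morphism $\varrho$ itself lies in $\K_\gamma$, i.e.\ $\varrho\cdot\gamma_B = \varrho$, which is precisely the condition needed to make the uniqueness of (1) invert the forgetful functor on the nose. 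Granting this reading, the remaining verifications reduce to routine diagrammatic manipulations with the Frobenius property and the weak-comonad axioms.
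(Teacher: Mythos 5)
Your argument is correct and follows essentially the same route as the paper: the paper's one-line proof defers to Theorem \ref{reg-basic}, whose own proof is exactly the combination of Propositions \ref{d-equal} and \ref{equal} that you invoke directly, together with the same derivation of the explicit formula for $\varrho$. Your additional care in checking $m_B\in\K_\gamma$ and in reading the $\gamma$-compatibility of $\varrho$ into the definition of $\ol\B^{F}_{F}$ only makes explicit what the paper leaves implicit.
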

\begin{proof}
 By our compatibility condition on $m$, we can apply Proposition \ref{reg-basic}
and the formula for $\varrho$ given there.
The assertions about the functors follow directly from the constructions.
\end{proof}

\pagebreak[3]

\begin{thm} \label{w-firm-m} {\bf Theorem.} 
Assume $(F,m,\delta)$ to satisfy the Frobenius property.
Let $(F,m,\eta)$ be a weak monad, $\vartheta:= m \cdot F\eta$,
and assume $\delta = \delta \cdot \vartheta$. Then,
\begin{zlist}
\item   
for a $\vartheta$-compatible $F$-module $(B,\varrho)$, 
there is a unique $\vartheta$-compatible module morphism 
$$\omega:B\xra{\eta_B} F(B)\xra{\delta_B} FF(B)\xra{F(\varrho)} F(B)$$
making $(B,\varrho,\omega)$ a Frobenius bimodule;

\item any $F$-morphism between $\vartheta$-compatible modules 
$(B,\varrho)$, $(B',\varrho')$
becomes a morphism between the Frobenius bimodules
$(B,\varrho,\omega)$ and $(B',\varrho',\omega')$;
\item there is an isomorphism of categories  
$$\Phi: \rB_{F} \to \rB^{F}_{F}, \quad (B,\varrho) \mapsto (B, \varrho,\omega),$$
with the forgetful functor $U^{F}:\rB^{F}_{F} \to \rB_{F}$ as inverse,
where $\rB^{F}_{F}$ denotes the category of Frobenius modules which are 
$\vartheta$-compatible as $F$-modules.
\end{zlist}
\end{thm}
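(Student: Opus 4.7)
The plan is to prove Theorem \ref{w-firm-m} by dualising the argument for Theorem \ref{w-firm}: swap the roles of monad and comonad, exchange $m \leftrightarrow \delta$, $\eta \leftrightarrow \ve$, $\vartheta \leftrightarrow \gamma$, and modules $\leftrightarrow$ comodules. The compatibility hypothesis $\delta = \delta\cdot \vartheta$ here plays exactly the role that $m = \gamma\cdot m$ played before, and Propositions \ref{r-equal} and \ref{equal-d} replace Propositions \ref{d-equal} and \ref{equal}.

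For part (1), I would first invoke Proposition \ref{r-equal} to conclude that every $\vartheta$-compatible $F$-module $(B,\varrho)$ is $\K_\vartheta$-firm in the sense of \ref{monads}; since by \ref{comp-morph-mod}(1) the class $\K_\vartheta$ is an ideal class of $F$-module morphisms, it is a candidate for the class $\K'$ in Proposition \ref{equal-d}. The hypotheses of \ref{equal-d} then require that $\delta_B : F(B) \to FF(B)$ (equipped on the target with the free module structure $m_{F(B)}$) be an $F$-module morphism lying in $\K_\vartheta$: the module-morphism part is exactly the Frobenius identity $\delta\cdot m = mF\cdot F\delta$, while membership in $\K_\vartheta$ is the translation of the standing hypothesis $\delta = \delta\cdot \vartheta$ using naturality of $\delta$ on $\eta$ and the weak-monad symmetry $m\cdot F\eta = m\cdot \eta F$. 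Proposition \ref{equal-d} then supplies a unique $\vartheta$-compatible morphism $\omega : B \to F(B)$ making $(B,\varrho,\omega)$ a Frobenius bimodule; identifying $LR=F$, $\tweta=\eta$ and $L\wh\eta R=\delta$, the explicit composite in Theorem \ref{reg-basic-d}(2) collapses to $\omega = F(\varrho)\cdot \delta_B\cdot \eta_B$, as claimed.

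For part (2), if $h : (B,\varrho)\to (B',\varrho')$ is any $F$-module morphism between $\vartheta$-compatible modules, then the characterisations in \ref{comp-morph-mod}(2)-(3) show that $h$ is automatically $\vartheta$-compatible; hence by the uniqueness clause of Proposition \ref{equal-d}(2) (applied with $\K'=\K_\vartheta$) the constructed $\omega$'s intertwine $h$, so $h$ is an $F$-comodule morphism, i.e.\ a morphism of Frobenius bimodules. For part (3) this lets me define $\Phi(B,\varrho) = (B,\varrho,\omega)$ on objects and $\Phi(h)=h$ on morphisms; it is a functor by (2), the identity $U^F\circ \Phi = I$ is immediate from the construction, and $\Phi\circ U^F = I$ follows from the uniqueness half of (1).

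The main obstacle is the technical verification in Step 1 that $\delta_B\in\K_\vartheta$, i.e.\ the needed identity $\vartheta_{F(B)}\cdot \delta_B = \delta_B$: this is where the Frobenius equation, the weak-monad axioms, and the hypothesis $\delta=\delta\cdot\vartheta$ must be combined in the correct order. Once this is in place, every remaining step is formally dual to the corresponding step in the proof of Theorem \ref{w-firm} and reduces to applying results already established in Sections \ref{Prel}--\ref{Frob-Frob}.
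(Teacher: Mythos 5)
Your route is the same as the paper's: its proof of this theorem is a one-line citation of Theorem \ref{reg-basic-d}, which is itself the formal dual of Theorem \ref{reg-basic}, proved by combining the ``compatible $\Rightarrow$ $\K$-(co)firm'' statement with the extension result for (co)firm (co)modules; your explicit chain Proposition \ref{r-equal} followed by Proposition \ref{equal-d} with $\K'=\K_\vartheta$ is exactly that argument, and the composite $\omega=F(\varrho)\cdot\delta_B\cdot\eta_B$ is the correct specialisation of the formula in \ref{reg-basic-d}(2). The ``main obstacle'' you flag is in fact immediate: since $\delta_B\colon (F(B),m_B)\to(FF(B),m_{F(B)})$ is a module morphism by the Frobenius identity $\delta\cdot m=mF\cdot F\delta$, it suffices by \ref{comp-morph-mod} to test $\vartheta$-compatibility on the source side, where it reads $\delta_B\cdot m_B\cdot \eta_{F(B)}=\delta_B\cdot\vartheta_B=\delta_B$ (using $m\cdot\eta F=m\cdot F\eta$), i.e.\ it is literally the standing hypothesis $\delta=\delta\cdot\vartheta$; the target-side identity you wrote down then follows because one triangle implies the other for module morphisms.

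The one genuine error is in your part (2): an $F$-module morphism $h$ between $\vartheta$-compatible modules is \emph{not} automatically $\vartheta$-compatible. Compatibility of $h$ means $\varrho'\cdot\eta_{B'}\cdot h=h$, and $\varrho'\cdot\eta_{B'}$ is only an idempotent, not the identity; already $h=I_B$ on a compatible but non-unital module fails (cf.\ Remark \ref{rem-comp}, where $\varrho\cdot\eta_B$, not $I_B$, is the identity of the subcategory of compatible morphisms). Your conclusion nevertheless survives, because Proposition \ref{equal-d}(2) is stated for \emph{arbitrary} module morphisms between $\K'$-firm modules: its proof never places $h$ itself in $\K'$, but rather the composites $F(h)\cdot\omega$ and $\omega'\cdot h$, which lie in $\K_\vartheta$ because $\omega$ and $\omega'$ do and $\K_\vartheta$ is an ideal class; these two agree after precomposition with the $\K_\vartheta$-coequaliser $\varrho$, hence are equal, so $h$ is a comodule morphism. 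Replace your ``automatically compatible, hence by uniqueness'' step by this direct appeal to \ref{equal-d}(2), and note that in part (3) the identity $\Phi\circ U^F=I$ likewise needs the given $\omega$ of a bimodule in $\rB^F_F$ to be $\vartheta$-compatible before the uniqueness of (1) can be invoked; the rest stands.
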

\begin{proof}
By Proposition \ref{reg-basic-d} 
 and the formula for $\omega$ given there.
\end{proof}

\begin{thm} \label{w-Frob-bm} {\bf Definition.} \em
 We call $(F,m,\eta;\delta,\ve)$ a {\em weak Frobenius monad}
provided
  $(F,m,\eta)$ is a weak monad,   $(F,\delta,\ve)$ is a weak comonad,  
 $(F,m,\delta)$ has the Frobenius property  (see (\ref{w-frob})), and
 $m \cdot F\eta = F\ve \cdot\delta$ (i.e. $\vartheta=\gamma$).
\end{thm}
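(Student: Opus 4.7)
The final item labeled Definition \ref{w-Frob-bm} is a definition, not an assertion, so strictly speaking there is nothing to prove. My plan therefore is not to produce a proof but to record the consistency checks that make the definition well-posed, since the definition bundles together several conditions drawn from earlier in the paper and implicitly asserts that they can coexist and that the idempotents $\vartheta = m \cdot F\eta$ and $\gamma = F\ve \cdot \delta$ are comparable.

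First I would verify that the two occurrences of the Frobenius condition referenced by the definition are consistent. The Frobenius property on $(F,m,\delta)$ is the commutativity of the left diagram in \ref{w-frob}, namely $\delta \cdot m = Fm \cdot \delta F = mF \cdot F\delta$. When $(F,m,\eta)$ is a weak monad and $(F,\delta,\ve)$ is a weak comonad, this is exactly the hypothesis used implicitly in Theorems \ref{w-firm} and \ref{w-firm-m}; the extra equality $m \cdot F\eta = F\ve \cdot \delta$ forces the module idempotent $\vartheta$ to coincide with the comodule idempotent $\gamma$, so the single symbol $\vartheta = \gamma$ employed in those theorems is unambiguous.

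Second, I would record that under the stated hypotheses the two side conditions required in Theorems \ref{w-firm} and \ref{w-firm-m} are automatic. Concretely, $m = \gamma \cdot m$ follows from $\gamma = \vartheta$ together with the weak-monad identity $m = m \cdot mF \cdot F\eta F$ (after using the Frobenius property to rewrite $mF \cdot F\eta F = \vartheta F$), and symmetrically $\delta = \delta \cdot \vartheta$ follows from the weak-comonad identities combined with $\vartheta = \gamma$. Thus a weak Frobenius monad in the sense of \ref{w-Frob-bm} automatically fits the hypotheses of both \ref{w-firm} and \ref{w-firm-m}, which is presumably the reason for the choice of axioms.

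The only point that might deserve mention as a potential obstacle is the compatibility of the quasi-unit and quasi-counit with the Frobenius datum, i.e.\ that $\eta$ and $\ve$ interact correctly via $m \cdot F\eta = F\ve \cdot \delta$ without forcing the monad to be unital or the comonad counital; but this is exactly what the equation postulates, so no further verification is needed. In summary, the definition is internally consistent and is designed precisely so that the isomorphisms $\Psi$ and $\Phi$ of Theorems \ref{w-firm} and \ref{w-firm-m} apply simultaneously, yielding the parallel identifications $\ol\B^F \simeq \ol\B^F_F$ and $\rB_F \simeq \rB^F_F$ with a common underlying idempotent $\vartheta = \gamma$.
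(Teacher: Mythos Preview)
You are right that \ref{w-Frob-bm} is a definition, and the paper accordingly gives no proof for it; your recognition of this matches the paper exactly.

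Your supplementary consistency checks go beyond what the paper does at this point (they anticipate the short proof of Theorem~\ref{push}), and there is one inaccuracy worth flagging. The rewriting $mF\cdot F\eta F=\vartheta F$ does not use the Frobenius property---it is simply the definition $\vartheta=m\cdot F\eta$ whiskered on the right by $F$---and the weak-monad axiom $m=m\cdot mF\cdot F\eta F$ then gives only $m=m\cdot\vartheta F$, not the required $m=\gamma\cdot m=\vartheta\cdot m$. To pass from one to the other you need naturality of $m$ and associativity, which yield $\vartheta\cdot m=m\cdot F\eta\cdot m=m\cdot mF\cdot FF\eta=m\cdot Fm\cdot FF\eta=m\cdot F\vartheta$, and then $m\cdot F\vartheta=m\cdot F(m\cdot\eta F)=m\cdot Fm\cdot F\eta F=m\cdot mF\cdot F\eta F=m$. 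The Frobenius property is not invoked here at all; it enters only through the separate assumption $\vartheta=\gamma$.
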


As a first property we observe:

\begin{thm} \label{w-F-F} {\bf Proposition.}  
Let $(F,m,\eta;\delta,\ve)$ be a weak Frobenius monad and assume the idempotent 
$m \cdot F\eta = F\ve \cdot\delta$ to be split by $F\to \ul F\to F$.
Then $\ul F$ has a canonical monad and comonad structure $(\ul F,\ul m,\ul \eta;\ul\delta,\ul \ve)$
which makes it a Frobenius monad.
\end{thm}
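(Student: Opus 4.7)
The key observation for my approach is that the hypothesis $m\cdot F\eta=F\ve\cdot\delta$ forces the weak monad idempotent $\vartheta$ and the weak comonad idempotent $\gamma$ to coincide, so a single splitting $F\xra{p}\ul F\xra{i}F$ with $p\cdot i=I_{\ul F}$ and $i\cdot p=\vartheta=\gamma$ handles both at once. Invoking \ref{q-mod} and \ref{q-comod} then provides the monad $(\ul F,\ul m,\ul\eta)$ with $\ul m=p\cdot m\cdot ii$, $\ul\eta=p\cdot\eta$, and the comonad $(\ul F,\ul\delta,\ul\ve)$ with $\ul\delta=pp\cdot\delta\cdot i$, $\ul\ve=\ve\cdot i$. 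The heart of the argument is verifying the Frobenius identity $\ul F\ul m\cdot\ul\delta\ul F=\ul\delta\cdot\ul m=\ul m\ul F\cdot\ul F\ul\delta$ on $\ul F$.

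The preparatory fact I would establish first is an absorption lemma: $\delta\cdot\gamma=\delta$ and, dually, $\gamma\cdot m=m$. The comonadic identity I derive by writing $\gamma=F\ve\cdot\delta$, applying naturality of $\delta$ at $\ve$ (which yields $\delta\cdot F\ve=FF\ve\cdot\delta F$), then coassociativity $\delta F\cdot\delta=F\delta\cdot\delta$, and finally the weak comonad axiom $\delta=G\ve G\cdot G\delta\cdot\delta$ rewritten as $\delta=F\gamma\cdot\delta$ (using $\ve F\cdot\delta=F\ve\cdot\delta=\gamma$). The monadic identity is obtained dually from $\vartheta=m\cdot F\eta=m\cdot\eta F$, naturality of $\eta$ at $m$, associativity of $m$, and the weak monad axiom $m=m\cdot\vartheta F$.

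With absorption in place, the middle Frobenius identity is immediate:
\begin{equation*}
\ul\delta\cdot\ul m=pp\cdot\delta\cdot(ip)\cdot m\cdot ii=pp\cdot(\delta\cdot\gamma)\cdot m\cdot ii=pp\cdot\delta\cdot m\cdot ii,
\end{equation*}
which by the Frobenius property of $F$ equals $pp\cdot Fm\cdot\delta F\cdot ii=pp\cdot mF\cdot F\delta\cdot ii$. For the outer composites, I would carry out a componentwise expansion of $\ul F\ul m\cdot\ul\delta\ul F$ at $X\in\B$, then apply (i) naturality of $p$ to commute $p$ past $F$, (ii) naturality of $\delta$ at $i$ to push $i$ through $\delta$, and (iii) the absorption identity $m\cdot\gamma F=m$, which together collapse the composite to $(pp\cdot\delta\cdot m\cdot ii)_X$. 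The mirror calculation for $\ul m\ul F\cdot\ul F\ul\delta$ uses the other Frobenius identity $\delta\cdot m=mF\cdot F\delta$ and ends in the same expression.

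The principal technical hurdle is bookkeeping the whiskerings $\ul F(\cdot)$ and $(\cdot)\ul F$ in terms of the splitting $(p,i)$: each expansion introduces intermediate factors of $ip=\gamma$ that must be absorbed. Once the absorption identities are available these factors vanish, and the entire verification reduces transparently to an application of the Frobenius property already known for $F$.
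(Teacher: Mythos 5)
Your proposal is correct and follows the same route as the paper, which simply invokes \ref{q-mod} and \ref{q-comod} for the induced (co)monad structures on $\ul F$ and dismisses the Frobenius identity as ``a routine diagram chase''; your absorption identities $\delta\cdot\gamma=\delta$ and $\vartheta\cdot m=m$ (together with naturality of $p$, $i$, $\gamma$ and $\delta$ to eliminate the interior factors $i\cdot p=\gamma=\vartheta$) are exactly the content of that chase, and they do follow from the weak (co)monad axioms as you indicate. The only point worth making explicit in a write-up is that after expanding $\ul F\ul m\cdot\ul\delta\ul F$ one must first commute the interior $\gamma$ past $FF(i)$ (or equivalently use $F\gamma\cdot\delta=\delta$) before the absorption applies, after which the outer composites collapse to $pp\cdot\delta\cdot m\cdot ii$ as you claim.
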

\begin{proof} The monad and comonad structures on $\ul F$ are obtained from \ref{q-comod} and \ref{q-mod}
and a routine diagram chase shows that the Frobenius property (see \ref{w-frob}) is satisfied.
\end{proof}
 
 Summarising we obtain our main result for these structures.

\begin{thm}\label{push}{\bf Theorem.}
 Let $(F,m,\eta;\delta,\ve)$ be a weak Frobenius monad. 
Then the constructions in {\rm \ref{w-firm}} and {\rm \ref{w-firm-m}}
 yield category isomorphisms
$$\xymatrix{\ol\B^F \ar[r]^{\Psi\;} & \ol\rB^F_F\ar[r]^{U^F} & \rB_F}, \quad 
 \xymatrix{ \rB_F \ar[r]^{\Phi\;} & \ol\rB^F_F\ar[r]^{U_F} & \ol\B^F}. $$
%$$\Psi:\ol\B^{F} \to\; \ol\rB^{F}_{F} , \quad  \Phi:\rB_{F}\to\; \ol\rB^{F}_{F},$$
where $\ol\rB^{F}_{F}$ denotes the category of those Frobenius $F$-bimodules which are 
 ($\gamma$-)compatible as $F$-comodules and ($\vartheta$-)compatible
as $F$-modules.
\end{thm}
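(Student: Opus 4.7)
The plan is to invoke Theorems \ref{w-firm} and \ref{w-firm-m} directly, verify their hypotheses in the weak Frobenius setting, and then check the single extra point that the Frobenius bimodules produced by $\Psi$ and $\Phi$ are compatible simultaneously as $F$-modules and as $F$-comodules, so that they lie in $\ol\rB^{F}_{F}$. The claimed isomorphisms then follow by restricting the isomorphisms of \ref{w-firm} and \ref{w-firm-m} and composing with the relevant forgetful functors.

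First I would verify the hypotheses. Theorem \ref{w-firm} demands $m = \gamma\cdot m$, and Theorem \ref{w-firm-m} demands $\delta = \delta\cdot\vartheta$. Both are straightforward consequences of the weak Frobenius axioms: using $\vartheta=\gamma$, each reduces to a relation that can be read off from one of the Frobenius squares $Fm\cdot\delta F = \delta\cdot m = mF\cdot F\delta$, combined with the weak monad and comonad axioms of \ref{q-mod} and \ref{q-comod}. Accordingly, \ref{w-firm} supplies an isomorphism $\Psi:\ol\B^{F}\to \ol\B^{F}_{F}$ with inverse $U_F$, and \ref{w-firm-m} supplies $\Phi:\rB_{F}\to \rB^{F}_{F}$ with inverse $U^F$.

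The key technical point is to show that $\Psi$ actually lands in the smaller category $\ol\rB^{F}_{F}$, i.e.\ that the module structure $\varrho = \ve_B\cdot m_B\cdot F(\omega)$ produced from a $\gamma$-compatible $(B,\omega)$ satisfies $\varrho = \varrho\cdot\vartheta_B$; dually, one must verify that $\Phi$ lands in $\ol\rB^{F}_{F}$, i.e.\ that the comodule structure $\omega = F(\varrho)\cdot\delta_B\cdot\eta_B$ built from a $\vartheta$-compatible $(B,\varrho)$ satisfies $\omega = \gamma_B\cdot\omega$. Both assertions come down to diagram chases using naturality of $m,\delta,\eta,\ve$, the Frobenius squares, the compatibility of the input $\omega$ respectively $\varrho$, and the key identification $\vartheta = \gamma$. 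I expect this pair of dual verifications, which uses the Frobenius property in an essential way, to be the main (though purely formal) obstacle.

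Once $\Psi$ and $\Phi$ are seen to factor through $\ol\rB^{F}_{F}$, the forgetful functors $U^F:\ol\rB^{F}_{F}\to \rB_{F}$ and $U_F:\ol\rB^{F}_{F}\to \ol\B^{F}$ are their two-sided inverses: the identities $U_F\circ\Psi = I$ on $\ol\B^{F}$ and $\Psi\circ U_F = I$ on $\ol\B^{F}_{F}$ supplied by \ref{w-firm} restrict to mutually inverse isomorphisms between $\ol\B^{F}$ and $\ol\rB^{F}_{F}$, and analogously for $\Phi, U^F$. This yields both displayed chains of category isomorphisms and, in particular, an isomorphism $\ol\B^{F}\simeq\rB_{F}$ factoring through the category $\ol\rB^{F}_{F}$ of compatible Frobenius $F$-bimodules.
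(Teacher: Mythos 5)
Your proposal is correct and follows essentially the same route as the paper's (very brief) proof: verify the hypotheses $m=\gamma\cdot m$ and $\delta=\delta\cdot\vartheta$ of Theorems \ref{w-firm} and \ref{w-firm-m} from the weak (co)monad axioms together with $\gamma=\vartheta$, and then observe that the resulting bimodules land in $\ol\rB^F_F$. The only remark worth adding is that the ``main obstacle'' you anticipate is in fact immediate: since $\gamma=\vartheta$, the $\gamma$-compatibility of the constructed $\varrho$ (guaranteed by \ref{w-firm}) is literally the equation $\varrho=\varrho\cdot\vartheta_B$ required for $\vartheta$-compatibility, and dually for $\omega$.
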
 

\begin{proof}
For a weak monad $(F,m,\eta)$, $m$ is $\vartheta$-compatible and 
hence $\gamma$-compatible by our assumption $\gamma=\vartheta$. 
Similarly, $\delta$ is $\vartheta$ compatible and hence the conditions 
in the preceding propositions are satisfied.   
\end{proof}

For (proper) monads and comonads the assertions simplify.
For Proposition \ref{w-firm}
this situation is considered in \cite[Section 4]{BoGo}
and  our results for this case correspond  essentially to 
\cite[Lemma 2, Corollary 1]{BoGo}. 

\pagebreak[3]

\begin{thm}\label{Cor-1}{\bf Corollary.}
 Let $(F,m,\delta)$   satisfy the Frobenius property and assume
$(F,\delta,\ve)$ to be a comonad.  
\begin{zlist} 
\item 
For any counital $F$-comodule $\omega:B\to F(B)$, there is some $F$-module morphism
$\varrho:F(B)\to B$  making
$(B,\varrho,\omega)$ a Frobenius bimodule.  
\item If $(F,m)$ allows for a unit, then $(B,\varrho)$ is a unital $F$-module.      
\item If $m\cdot\delta =I_F$, then, for any Frobenius bimodule 
$(B,\varrho,\omega)$, $(B,\varrho)$ is a firm $F$-module.
\end{zlist}
\end{thm}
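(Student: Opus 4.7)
The plan is to derive all three parts from Theorem \ref{w-firm}, using that for a \emph{proper} comonad the distinction between compatible comodules and arbitrary ones disappears. First I would observe that the counit identities $\ve F\cdot\delta = I_F = F\ve\cdot\delta$ force the idempotent $\gamma := \ve F\cdot\delta$ to equal $I_F$, so every non-counital $F$-comodule is trivially $\gamma$-compatible ($\ol\B^F = \uB^F$) and the hypothesis $m = \gamma\cdot m$ of Theorem \ref{w-firm} is automatic. Then (1) is immediate from Theorem \ref{w-firm}(1): the counital comodule $(B,\omega)$ acquires the module structure
\[
\varrho \;=\; \ve_B\cdot m_B\cdot F(\omega)
\]
and $(B,\varrho,\omega)$ becomes a Frobenius bimodule.

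For (2), I would plug the unit $\eta$ into this formula. By naturality $F(\omega)\cdot\eta_B = \eta_{F(B)}\cdot\omega$, so
\[
\varrho\cdot\eta_B \;=\; \ve_B\cdot m_B\cdot\eta_{F(B)}\cdot\omega \;=\; \ve_B\cdot\omega \;=\; I_B,
\]
using the unit axiom $m\cdot\eta F = I_F$ and counitality of $\omega$. Hence $(B,\varrho)$ is unital.

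For (3), I take any Frobenius bimodule $(B,\varrho,\omega)$ and exploit the Frobenius identities $\omega\cdot\varrho = F\varrho\cdot\delta_B = m_B\cdot F\omega$ together with $m\cdot\delta = I_F$ to obtain the two retraction identities
\[
\varrho\cdot\omega\cdot\varrho \;=\; \varrho\cdot F\varrho\cdot\delta_B \;=\; \varrho\cdot m_B\cdot\delta_B \;=\; \varrho,\qquad
\omega\cdot\varrho\cdot\omega \;=\; m_B\cdot F\omega\cdot\omega \;=\; m_B\cdot\delta_B\cdot\omega \;=\; \omega.
\]
To verify the defining fork is a coequaliser in $\uB_F$, take an $F$-module morphism $h : F(B)\to (X,\rho_X)$ with $h\cdot m_B = h\cdot F\varrho$ and set $q := h\cdot\omega$. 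The identity $m_B\cdot F\omega = \omega\cdot\varrho$ and the fact that $h$ is a module morphism combine to give $q\cdot\varrho = \rho_X\cdot Fq$, so $q$ is a module morphism, while
\[
q\cdot\varrho \;=\; h\cdot\omega\cdot\varrho \;=\; h\cdot F\varrho\cdot\delta_B \;=\; h\cdot m_B\cdot\delta_B \;=\; h.
\]
Uniqueness of the factorisation and the epimorphism property of $\varrho$ in $\B$ should then be extracted from the two retraction identities above, which exhibit $B$ as a retract of $F(B)$ through the pair $(\varrho,\omega)$.

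The principal obstacle lies in (3): securing uniqueness of $q$ inside $\uB_F$ and the epimorphism property of $\varrho$ in $\B$ from the two-sided retraction $\varrho\cdot\omega\cdot\varrho = \varrho$, $\omega\cdot\varrho\cdot\omega = \omega$, rather than from a clean splitting $\varrho\cdot\omega = I_B$ (which would demand counitality of $\omega$). The argument has to squeeze these two properties out of the interplay between the Frobenius identity and $m\cdot\delta = I_F$, much as in Proposition \ref{equal}.
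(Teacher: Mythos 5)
Parts (1) and (2) of your proposal are correct and take the same route as the paper, which simply cites Theorem \ref{w-firm}: for a proper comonad $\gamma=I_F$, so every comodule is compatible and $m=\gamma\cdot m$ is automatic; your explicit formula $\varrho=\ve_B\cdot m_B\cdot F(\omega)$ and the naturality computation giving $\varrho\cdot\eta_B=I_B$ are exactly what is intended.

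The obstacle you flag in (3) is a genuine gap, and it cannot be closed in the form you attempt. From $\varrho\cdot\omega\cdot\varrho=\varrho$ and $\omega\cdot\varrho\cdot\omega=\omega$ you only obtain that $e:=\varrho\cdot\omega$ is an idempotent with $e\cdot\varrho=\varrho$ and $\omega\cdot e=\omega$; nothing forces $e=I_B$, and without that neither uniqueness of the factorisation $q$ among all morphisms of $\uB_F$ nor epimorphy of $\varrho$ in $\B$ follows. Concretely, take $F=I_\B$ on $\B={\rm Set}$ with $m=\delta=\ve=I$: the hypotheses of the Corollary hold, a Frobenius bimodule is then precisely a triple $(B,\varrho,\omega)$ with $\varrho=\omega$ idempotent, and for a non-identity idempotent $\varrho$ the map $\varrho$ is not epimorph and the factorisation through $\varrho$ is not unique. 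So with the definition of \emph{firm} from Remark \ref{rem-monads} the literal statement fails for arbitrary Frobenius bimodules; what the paper actually proves (via Proposition \ref{prop-w-sep}, using that $m\cdot\delta=I_F$ makes every bimodule morphism into $B$ or out of $F(B)$ automatically $\theta$-compatible) is that $(B,\varrho)$ is $\K_\theta$-firm, i.e.\ the defining fork is a coequaliser relative to the $\theta$-compatible morphisms of \ref{comp-morph-bi}. In that restricted class uniqueness is recovered for free, since a compatible $q$ satisfies $q=q\cdot\varrho\cdot\omega$ and is therefore determined by $q\cdot\varrho$. If you want firmness in the unrestricted sense, you must confine yourself to the bimodules produced in part (1): there $\omega$ is counital, hence
$$\varrho\cdot\omega=\ve_B\cdot m_B\cdot F(\omega)\cdot\omega=\ve_B\cdot m_B\cdot\delta_B\cdot\omega=\ve_B\cdot\omega=I_B,$$
and with this clean splitting your existence argument, uniqueness, and the epimorphism property of $\varrho$ all go through.
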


\begin{proof}   
 (1), (2) hold by Theorem \ref{w-firm}; (3) follows from Theorem \ref{prop-w-sep}. 
\end{proof}

\begin{thm}\label{Cor-2}{\bf Corollary.}
 Let $(F,m,\delta)$   satisfy the Frobenius property and assume
 $(F,m,\eta)$ to be a monad.
\begin{zlist} 
\item 
For any  unital $F$-module $\varrho:F(B)\to B$, there is some $F$-comodule morphism
$\omega: B\to F(B)$ (given in {\rm \ref{reg-basic-d}}) making
$(B,\varrho,\omega)$ a Frobenius bimodule.  
\item If $(F,\delta)$ allows for a counit, then $(B,\omega)$ is a counital $F$-comodule.   
\item If $m\cdot\delta=I_F$, then, for any Frobenius bimodule $(B,\varrho,\omega)$,
$(B,\omega)$
 is a firm $F$-comodule.
\end{zlist}
\end{thm}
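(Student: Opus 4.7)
The plan is to obtain the three assertions as the duals of the corresponding statements in Corollary \ref{Cor-1}, reading off everything from Theorem \ref{w-firm-m} and Propositions \ref{sep-mod}, \ref{prop-w-sep}. The key observation that makes the weak-monad hypotheses of Theorem \ref{w-firm-m} trivially satisfied is that for a proper monad $(F,m,\eta)$ the idempotent $\vartheta = m\cdot F\eta$ coincides with $I_F$: hence $\delta = \delta\cdot\vartheta$ holds for free, and every $F$-module $(B,\varrho)$ is $\vartheta$-compatible.

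For (1) I would apply Theorem \ref{w-firm-m}(1) directly: any unital, hence $\vartheta$-compatible, module $(B,\varrho)$ is taken to the Frobenius bimodule $(B,\varrho,\omega)$ via the formula $\omega = F(\varrho)\cdot\delta_B\cdot\eta_B$ as spelled out in \ref{reg-basic-d}. For (2) I would just compute
\[
\ve_B\cdot\omega \;=\; \ve_B\cdot F(\varrho)\cdot\delta_B\cdot\eta_B
            \;=\; \varrho\cdot\ve_{F(B)}\cdot\delta_B\cdot\eta_B
            \;=\; \varrho\cdot\eta_B \;=\; I_B,
\]
using naturality of $\ve$, the counit axiom $\ve F\cdot\delta = I_F$, and unitality of $\varrho$. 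This is a three-line check with no hidden content.

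For (3) I would translate the Frobenius-bimodule formalism of Section~\ref{Frob-Frob} into the present setting via $L\eta R \leftrightarrow \delta$ and $L\twve R \leftrightarrow m$, so that $\theta$ becomes $m\cdot\delta$. The hypothesis $m\cdot\delta = I_F$ then reads $\theta = I_F$. From Proposition \ref{sep-mod} applied to any Frobenius bimodule $(B,\varrho,\omega)$ one gets $\omega\cdot\varrho\cdot\omega = \theta_B\cdot\omega = \omega$; by \ref{comp-morph-bi}(6) this is precisely the $\theta$-compatibility of $\omega$, and Proposition \ref{prop-w-sep}(1) yields that $(B,\omega)$ is $\K_\theta$-cofirm. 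Since $\theta = I_F$, the class $\K_\theta$ collapses to the class of all comodule morphisms, so cofirmness holds in the absolute sense, i.e.\ $(B,\omega)$ is a firm $F$-comodule.

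The main obstacle I anticipate is purely bookkeeping: the careful identification between the section~3 data $(LR, L\eta R, L\twve R)$ and the section~4 data $(F,\delta,m)$, so that Propositions \ref{sep-mod} and \ref{prop-w-sep} genuinely apply. Once that dictionary is in place the arguments are formal. No delicate mono/epi hypothesis is needed in (3) because the condition $m\cdot\delta = I_F$ both identifies $\theta$ with $I_F$ and provides, via the Frobenius square $\omega\cdot\varrho = m_B\cdot F(\omega)$, the computation $\omega\cdot(\varrho\cdot h) = m_B\cdot\delta_B\cdot h = h$ that is the heart of the equaliser universal property.
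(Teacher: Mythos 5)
Your proposal is correct and follows essentially the same route as the paper, which proves Corollary \ref{Cor-2} (implicitly, by symmetry with Corollary \ref{Cor-1}) by citing Theorem \ref{w-firm-m} for (1) and (2) and Proposition \ref{prop-w-sep} for (3). You merely supply the details the paper leaves tacit: that $\vartheta=m\cdot F\eta=I_F$ for a proper monad makes the hypotheses of \ref{w-firm-m} automatic, the explicit counit computation for (2), and the identification $\theta=m\cdot\delta$ needed to invoke \ref{sep-mod} and \ref{prop-w-sep} in (3).
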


For proper monads and comonads  $F$, all non-unital $F$-modules are compatible
and all non-counital $F$-comodules are compatible, that is, 
$\uB_{F}=\ul\B_{F}$ and  $\uB^{F}= \ol\B^{F}$. Thus we have:

\btm \label{Cor}  {\bf Corollary.} 
Let $(F,m,\eta;\delta,\ve)$ be a Frobenius monad.
  There are category  isomorphisms
$$\Psi: \uB^{F} \to\; \uB^{F}_{F} , \quad  \Phi:\uB_{F}\to\; \uB^{F}_{F},$$
where $ \uB^{F}_{F}$ denotes the category of non-unital and non-counital Frobenius 
$F$-bimodules,  and 
$$\Psi': \B^{F} \to\; \B^{F}_{F} , \quad  \Phi':\B_{F}\to\; \B^{F}_{F},$$
where $ \B^{F}_{F}$ is the category of unital and counital Frobenius 
$F$-bimodules.  
\etm

It is easy to see that (by (co)restriction) these isomorphisms induce isomorphisms
between the category of unital $F$-modules, counital $F$-comodules, and of unital
and counital Frobenius bimodules, an observation following from Eilenberg-Moore 
 [4], which may be considered as the starting point for the categorical 
treatment of Frobenius algebras.

\end{document}